\newcommand{\C}{\mathbb C}
\newcommand{\R}{\mathbb R}
\newcommand{\Z}{\mathbb Z}
\newcommand{\kk}{\mathbbm k}
\newcommand{\A}{\mathcal{A}}
\newcommand{\AlgL}{\mathcal{A}_{\Lambda}}
\newcommand{\M}{\mathcal{M}}
\def\d{\partial}
\newcommand{\e}{\epsilon}
\newcommand{\LCH}{\mathit{LCH}}
\newcommand{\CC}{\mathcal{C}}
\newcommand{\RR}{\mathcal{R}}
\newcommand{\id}{\operatorname{id}}
\newcommand{\tb}{\operatorname{tb}}
\newcommand{\rot}{\operatorname{rot}}
\newcommand{\sgn}{\operatorname{sgn}}
\newcommand{\im}{\operatorname{im}}
\newcommand{\Ob}{\operatorname{Ob}}
\newcommand{\Hom}{\operatorname{Hom}}
\newcommand{\Aug}{\mathcal{A}ug}
\newcommand{\Sh}{\mathcal{S}h}
\newcommand{\Rep}{\mathcal{R}ep}
\newcommand{\Mat}{\operatorname{Mat}}
\newcommand{\Ext}{\operatorname{Ext}}
\newtheorem{theorem}{Theorem}[section]
\newtheorem{lemma}[theorem]{Lemma}
\newtheorem{proposition}[theorem]{Proposition}
\theoremstyle{definition}
\newtheorem{definition}[theorem]{Definition}
\newtheorem{remark}[theorem]{Remark}
\newtheorem{example}[theorem]{Example}
\begin{document}
\author{John B.\ Etnyre}
\author{Lenhard L.\ Ng}

\address{Georgia Institute of Technology}
\email{etnyre@math.gatech.edu}

\address{Duke University}
\email{ng@math.duke.edu}

\title{Legendrian contact homology in $\mathbb{R}^3$}

\begin{abstract}
This is an introduction to Legendrian contact homology and the Chekanov--Eliashberg differential graded algebra, with a focus on the setting of Legendrian knots in $\mathbb{R}^3$. 

This is the published version of the paper, but with a section of errata added at the end.
\end{abstract}

\subjclass{53D42; 53D10, 57K10}
\keywords{Legendrian contact homology, Legendrian knot, holomorphic curve}

\maketitle

\tableofcontents

\section{Introduction}

Legendrian knots have been an integral part of three dimensional contact geometry for a long time. They can be used to construct all contact manifolds from the standard contact structure on $S^3$ through surgery operations. They can be used to distinguish and understand contact structures: for example the famous tight versus overtwisted dichotomy can be expressed in terms of Legendrian knots, and contact structures on many manifolds can be distinguished using Legendrian knots. 
A fundamental problem in the theory of Legendrian knots is the classification problem: completely characterize Legendrian knots up to the natural equivalence relation, Legendrian isotopy. This is finer than the classification of smooth knots, as follows from the existence of two long-established ``classical'' invariants of Legendrian knots, the Thurston--Bennequin invariant and rotation number, which are algebro-topological numerical invariants that can distinguish between Legendrian knots of the same underlying smooth knot type. 

It was only about 20 years ago that other, ``non-classical'' invariants of Legendrian knots were developed. There are now a number of non-classical invariants. The first of these, and in many regards the most important, is \textit{Legendrian contact homology} (LCH), introduced by Chekanov \cite{Che} and Eliashberg \cite{Eli}. LCH, which is a cousin of Lagrangian intersection Floer homology, is
the homology of what has become known as the {\em Chekanov--Eliashberg differential graded algebra} (DGA), and we will sometimes abuse notation and use the terms LCH and Chekanov--Eliashberg DGA interchangeably. In the past 20 years, LCH has been shown to be a powerful invariant of Legendrian knots, but it also has revealed a beautiful internal structure and deep connections with smooth topology and symplectic geometry. 

Our goal in this paper is to present a fairly thorough overview of Legendrian contact homology, and the network of ideas radiating from it, in the setting where the theory is most fully developed: for Legendrian knots in the standard contact structure in $\R^3$.
We will discuss several points of view on the Chekanov--Eliashberg DGA and indicate the development of its properties over the years since its introduction. This discussion begins in Section~\ref{theDGA} with a description of the Chekanov--Eliashberg DGA from both combinatorial and geometric perspectives and an exploration of some basic properties of the DGA.

Trying to directly compare the Legendrian contact homology 
of two Legendrian knots is notoriously difficult (as are many noncommutative algebra problems), and as soon as the theory was developed, tools for extracting meaningful and computable information were also developed. Chief among these are augmentations of the Chekanov--Eliashberg DGA, which can be thought of as representations of LCH. 
In Section~\ref{sec:linear}, we introduce augmentations and describe how Chekanov used them to ``linearize'' Legendrian contact homology, producing an invariant that is much easier to use to distinguish between Legendrian knots than the full DGA.

Augmentations have now emerged beside LCH as an important tool in the study of Legendrian knots, in many different ways.
In one direction, simply counting augmentations over finite fields leads to surprisingly interesting invariants of Legendrian knots. 
Shortly after the introduction of the Chekanov--Eliashberg DGA, Chekanov and Pushkar defined another Legendrian invariant, namely the collection of rulings of Legendrian front diagrams.
It turns out (see Section~\ref{rulingaugs}) that the count of augmentations and the count of rulings for a Legendrian knot give the same information about a Legendrian knot. Moreover there are beautiful connections with topology: Rutherford 
discovered that the appropriate count of rulings determines a portion of the Kauffman and HOMFLY-PT polynomials of the underlying smooth knots, thus providing a subtle connection between contact geometry and smooth knot theory.

In another direction, given an augmentation, one can build on Chekanov's construction of linearized LCH to construct a more elaborate algebraic structure, which takes the form of an $A_\infty$ algebra and can be shown to be a stronger invariant than linearized LCH (see Section~\ref{ssec:a-infty}). This can further be extended to an entire $A_\infty$ category called the augmentation category, which we discuss in Section~\ref{sec:augcat}. The objects of this category are augmentations and the $A_\infty$ morphisms can be read off from the Chekanov--Eliashberg DGA, and the category imposes a rather rich structure on the set of augmentations. 
In $\R^3$ it has been proven that the augmentation category is isomorphic to a category of sheaves associated to a Legendrian knot, thus providing a connection between Legendrian knots and algebraic geometry that also touches on mirror symmetry.

Augmentations are algebraic in nature but are closely related to a geometric construction, namely Lagrangian cobordisms between Legendrian knots.
In Section~\ref{sec:fillings} we discuss how Lagrangian cobordisms induce maps between Chekanov--Eliashberg DGAs. In particular, a ``filling'' of a Legendrian knot, which is an exact Lagrangian surface bounding the knot, gives an augmentation of the DGA of the knot. Although not all augmentations arise in this fashion, one can often use augmentations as an algebraic stand-in for fillings. The augmentation category described earlier is then an algebraic analogue of a type of Fukaya category generated by fillings of a Legendrian knot.

Although one can study Legendrian contact homology on its own merits, a large amount of recent interest in the subject comes from its relation to various invariants of symplectic manifolds. In particular, there is a large class of symplectic $4$-manifolds with boundary, Weinstein domains, which can be obtained from a standard symplectic $4$-ball (or other standard pieces) by attaching Weinstein handles to Legendrian knots in the boundary. It follows from the work of Bourgeois, Ekholm, and Eliashberg that the symplectic homology of these Weinstein $4$-manifolds, as well as some invariants of their contact boundary, are essentially determined by the Chekanov--Eliashberg DGA of these Legendrian knots. This picture is still being developed but we give a brief introduction in Section~\ref{LCHandWeinstein}.

Our focus in this paper on LCH in $\R^3$ 
unfortunately forces us to omit generalizations to Legendrian knots in other contact $3$-manifolds and to higher dimensions, though we discuss these briefly in Section~\ref{extensions}. In particular, we do not consider knot contact homology, an invariant of smooth knots in $\R^3$ that is given by the Legendrian contact homology of the unit conormal bundle to the knot, which is a Legendrian $2$-torus in the $5$-dimensional unit cotangent bundle of $\R^3$. Readers interested in knot contact homology are referred to the surveys \cite{EE05,Ng06,Ng-KCH-survey,Ekholm-KCH}.

Another subject that is related to the material in this survey but beyond its scope is the rich subject of generating families, which provide another way to construct invariants of Legendrian knots.
Given a function $f:\R^n\times \R\to \R$ one can consider the plot of the ``fiberwise critical set'' $\{(t_0,\frac{\partial f}{\partial t}(x_0, t_0), f(x_0,t_0))\}$ for points $(t_0,x_0)$ such that $ \frac{\partial f}{\partial x_0}(x_0, t_0)=0$. Under some transversality conditions this set will be a Legendrian knot $\Lambda$ in the standard contact structure on $\R^3$ and we say that $f$ is a generating family for $\Lambda$. The existence of generating families for a Legendrian knot in $\R^3$ turns out to be equivalent to the existence of augmentations, by the combined work of a number of authors \cite{Fuchs, FI, FR11,  Sabloff, ChP}, and furthermore there is a natural notion of homology associated to a generating family \cite{FR11, JT06, Tr01, ST13} that turns out to be the same as linearized contact homology for the appropriate augmentation \cite{FR11}.

\vspace{11pt}
\noindent
{\bf Acknowledgments.}  
The first author was partially supported by NSF grant DMS-1608684. The second author was partially supported by NSF grants DMS-1406371 and DMS-1707652.
We thank Roman Golovko, Chindu Mohanakumar, Dan Rutherford, and Angela Wu for useful comments and error correction on earlier versions of this paper. We also thank the referee for many valuable suggestions.  

\section{Preliminaries}\label{prelims}

Throughout this paper we will focus on Legendrian knots in the standard contact $3$-manifold $(\R^3, \xi_{std})$, where 
\[
\xi_{std}= \ker (dz-y\, dx).
\]
These are knots with a regular parameterization $\gamma :\thinspace S^1\to \R^3$ such that $\gamma'(t)$ is tangent to  $\xi_{std}$ at $\gamma(t)$ for all $t \in S^1$. We will generally be interested in equivalence classes of Legendrian knots under Legendrian isotopy, which means smooth isotopy through Legendrian knots. We will assume the reader is familiar with the basics of the subject as presented in \cite{E, Geigesbook}, but recall a few ideas and notation for the reader's convenience. 

\subsection{Projections of Legendrian knots}
If $\Lambda$ is a Legendrian knot in $(\R^3, \xi_{std})$ there are two important projections to consider; see Figure~\ref{fig:leex} for examples.
\begin{figure}
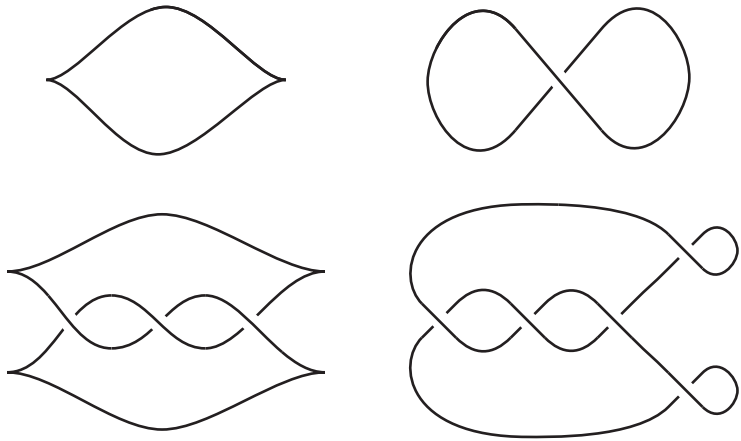
{\small
\begin{overpic}{figures/LegExamples}
\end{overpic}}
\caption{On the left are examples of the front projection of the unknot and the right handed trefoil knot. On the right are examples of the Lagrangian projection of the same knots; cf.\ Lemma~\ref{morsification}.}
\label{fig:leex}
\end{figure}
The first is the {\em Lagrangian projection}
\[
\Pi:\R^3\to \R^2_{xy}: (x,y,z)\mapsto (x,y).
\]
The image $\Pi(\Lambda)$ of $\Lambda$ will be an immersed curve with, generically, transverse double points. This is called the Lagrangian projection since $\Pi(\Lambda)$ is an immersed Lagrangian submanifold of the symplectic manifold $(\R^2_{xy}, d\alpha)$ (and more generally if $\Lambda$ is a Legendrian submanifold of a $1$-jet space $J^1(M) = T^*M \times \R$ then the projection $\Pi$ to $T^*M$ maps $\Lambda$ to an immersed Lagrangian in $T^*M$). Notice that $\Lambda$ is determined up to Legendrian isotopy by its Lagrangian projection. Specifically if $\Lambda$ is parameterized by $\gamma(t)=(x(t), y(t), z(t))$ then the projection $\Pi(\Lambda)$ is parameterized by the curve $t\mapsto (x(t), y(t))$ and the $z$-coordinate can be recovered from $\Pi\circ \gamma$ by 
\[
z(t)= z_0 + \int_0^t y(t) x'(t) \, dt
\]
for the appropriate choice of $z_0$, and different choices of $z_0$ give Legendrian knots isotopic to $\Lambda$. 

We will see in the next section that this projection is very useful to define the Chekanov--Eliashberg DGA of $\Lambda$, but we point out a difficulty with this projection. An immersed closed curve in $\R^2_{xy}$ only lifts to a Legendrian knot in $\R^3$ if the total integral of $y\, dx$ around the curve is zero; furthermore, even if this total integral is zero and the immersion has transverse double points, the sign of the integral of $y\,dx$ along a section of the curve from a double point back to itself will determine the over- and under-crossing information at the double point. In practice one draws Lagrangian projections of Legendrian knots modulo planar isotopy of $\R^2_{xy}$, and the crossing information determines a collection of inequalities involving the areas of the regions enclosed by the immersion. See \cite{Che} for a fuller discussion.

A consequence of these area inequalities is that not every knot diagram in $\R^2_{xy}$ (is planar isotopic to a diagram that) represents a Legendrian knot. In particular, whereas any sequence of Reidemeister moves will turn the diagram of a smooth knot into the diagram of a knot that is smoothly isotopic to the original, this is not true for Lagrangian projections of Legendrian knots and Legendrian isotopy. Nevertheless, any Legendrian isotopy can be realized by a sequence of Reidemeister moves for $\Pi(\Lambda)$, where the Reidemeister moves are restricted to double point and triple point moves (i.e., the usual Reidemeister II and III moves, but not I), along with ambient planar isotopies of an immersed curve. We will refer to these Legendrian Reidemeister moves when discussing invariance of the Chekanov--Eliashberg DGA in Section~\ref{ssec:invariance}.
See Figure~\ref{fig:Lreid}. 
\begin{figure}[htb]
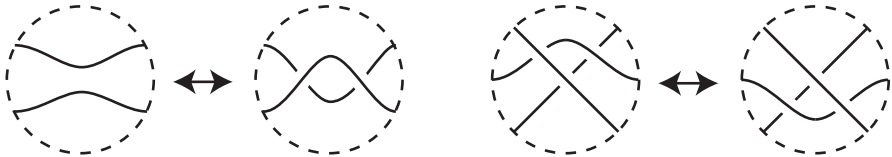
{\tiny
\begin{overpic}{figures/LagrangianReidemeister}
\end{overpic}}
\caption{Reidemeister moves in the Lagrangian projection. On the left is the double point move and on the right is the triple point move. 
(These diagrams can be arbitrarily rotated or reflected.)}
\label{fig:Lreid}
\end{figure}

The {\em front projection} is the map
\[
F: \R^3\to \R^2_{xz}:(x,y,z)\mapsto (x,z). 
\]
The front projection $F(\Lambda)$ of a Legendrian knot $\Lambda$ is quite nice in that the $y$ coordinate can completely be recovered from the projection by $y=\frac{dz}{dx}$. Notice that the finiteness of the $y$ coordinate implies that no tangent lines to the projection can be vertical (parallel to the $z$-axis), and thus the front projection of $\Lambda$ cannot be immersed. Instead, the front projection contains semicubical cusps (modeled on $z^2=\pm x^3$) where the $x$ coordinate changes from increasing to decreasing or vice versa. We can also see that given a crossing in $F(\Lambda)$ one can always determine the over- and under-strand: the strand with the more negative slope will be in front of the one with the more positive slope. To see why this is the case we note that if the front projection is drawn with the $z$ axis vertical and $x$ axis horizontal, then to give $\R^3$ its standard orientation we must have that the positive $y$ axis is behind the plane of the projection and the negative axis is in front.  

The front projections of Legendrian knots are particularly easy to deal with since any diagram in $\R^2_{xz}$ meeting the above mentioned properties (no vertical tangencies, immersion away from semicubical cusps) lifts to a unique Legendrian knot.  As a consequence, it is usually easier to construct Legendrian isotopies through a sequence of moves on their front projections than through moves on their Lagrangian projections (as mentioned before, it can be tricky to check that the latter actually corresponds to an isotopy of Legendrians). There is a set of ``Legendrian Reidemeister moves'' that relate the front projections of any Legendrian isotopic knots \cite{Swiatkowski92}. 

Because Legendrian contact homology is easier to describe in the Lagrangian projection, while Legendrian isotopies are easier to see in the front projection, it will be convenient to be able to go between the two projections. This can be done through a process called Morsification or resolution (see \cite{Ng-CLI}, or \cite{E} for a brief discussion; Figure~\ref{fig:leex} illustrates two examples).
\begin{lemma}[\cite{Ng-CLI}]\label{morsification}
Given the front projection of a Legendrian knot, one can produce a diagram planar isotopic to the Lagrangian projection of a Legendrian isotopic knot by replacing the right and left cusps of the front as shown in Figure~\ref{fig:convert}.
\end{lemma}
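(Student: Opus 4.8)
The plan is to construct, from the given front $D$, a Legendrian knot $\Lambda'$ that is Legendrian isotopic to the lift $\Lambda$ of $D$ and whose Lagrangian projection is planar isotopic to the diagram of Figure~\ref{fig:convert}. The one fact used throughout is that a point of a Legendrian knot has $y$-coordinate equal to the slope $dz/dx$ of its front, so that $\Pi(\Lambda')$ is literally the plane curve $x\mapsto(x,dz/dx)$ traced out along $\Lambda'$.

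First I would put $D$ in general position by a $C^\infty$-small Legendrian isotopy, so that all of its $x$-critical points are nondegenerate semicubical cusps occurring at distinct $x$-values and all crossings are transverse and at distinct $x$-values; this ``Morsification'' of the function $x$ on $\Lambda$ is what names the lemma. Next I would apply the scaling Legendrian isotopy $\phi_t(x,y,z)=(x,ty,tz)$, with $t$ decreasing to a small positive value (each $\phi_t$ is a contactomorphism since $\phi_t^*(dz-y\,dx)=t(dz-y\,dx)$), so that every slope $dz/dx$ becomes small. Finally I would perform one more Legendrian isotopy, supported near the cusps and crossings, so that away from small neighborhoods of these the front is a union of graphs $z=z(x)$ of nearly constant slope, with distinct slopes on overlapping $x$-ranges, and near each cusp or crossing it agrees with a fixed standard local model. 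The models are designed so that their Lagrangian projections $x\mapsto(x,dz/dx)$ are, respectively: at a left cusp, a smoothly embedded leftward turn with no double point; at a right cusp, a single loop with exactly one transverse double point (available because the two branches approaching a right cusp necessarily have slopes of opposite sign tending to $0$, so by a suitable choice of the branches these slopes agree at exactly one point before the cusp), which is the loop of Figure~\ref{fig:convert}; and at a crossing, a single transverse double point, obtained by giving the two strands slightly different curvatures so that their slopes coincide at one nearby point, whose over/understrand agrees with that of $D$ by the sign of $\int y\,dx$ along the short arc joining the two preimages.

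It then remains to assemble the pieces. Along the arcs $\Pi(\Lambda')$ is a nearly horizontal embedded arc, and the distinctness of the arc slopes on overlapping $x$-ranges forbids any double point there; the local models glue continuously to the arcs; and by the local analysis the only double points of $\Pi(\Lambda')$ are one per crossing of $D$ (with the same over/understrand as in $D$, as forced by the area condition) and one per right cusp of $D$. Hence $\Pi(\Lambda')$ is exactly the diagram obtained from $D$ by smoothing the left cusps, replacing the right cusps by loops, and keeping the crossings, i.e.\ the diagram of Figure~\ref{fig:convert}, up to planar isotopy. Since every step was a Legendrian isotopy, $\Lambda'$ is Legendrian isotopic to $\Lambda$, which proves the lemma.

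The delicate part is the design and compatibility of the standard local models at right cusps and crossings: one must check that each model is Legendrian isotopic, relative to its boundary, to the corresponding piece of the Morsified and flattened front; that models at neighboring features have matching boundary slopes so that they glue continuously with the intervening arcs; and -- the crux -- that the right-cusp and crossing models each produce exactly one double point with the crossing data drawn in Figure~\ref{fig:convert}, which is what pins down those pictures. The remaining points (the Lagrangian-projection computation along the arcs, the vanishing of double points away from features, and the observation that the resulting diagram is automatically liftable because it is the genuine Lagrangian projection of $\Lambda'$) are routine.
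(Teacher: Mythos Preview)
The paper does not prove this lemma; it simply cites \cite{Ng-CLI} and states the result. So there is no proof in the paper to compare against directly. Your outline is close in spirit to the argument in the cited reference, but there is one genuine gap and one point of confusion worth flagging.

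The gap is in your treatment of the arcs. You arrange that on each overlapping $x$-range the strands have \emph{distinct} nearly-constant slopes; this does make the Lagrangian projection of the arcs embedded, but it does not make it planar isotopic to the front on those arcs. For that you need the slopes to be \emph{ordered the same way as the $z$-heights} (e.g.\ higher strand $\Rightarrow$ larger slope). Without this coupling, the strands in $\Pi(\Lambda')$ are a permutation of those in the front and the resulting diagram need not be the one in Figure~\ref{fig:convert}. This ordering condition is also what does the real work at the features: it forces, by the intermediate value theorem, exactly one slope-coincidence (hence one Lagrangian crossing) near each front crossing, and exactly one near each right cusp, while being compatible with the standard cusp model at left cusps so that no crossing is forced there.

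The confusion is in your account of the right cusp. For the standard semicubical model $x=x_0-t^2,\ z=z_0+t^3$ one computes $y=dz/dx=-3t/2$, so the upper branch ($t>0$) has \emph{negative} slope near the cusp and the lower branch positive slope; the Lagrangian projection of the bare cusp is a smooth rightward arc with no double point. The loop does not come from the cusp itself but from the mismatch between this local behavior and the global ordering you should be imposing on the incoming arcs: if the upper arc enters with the larger slope and the lower with the smaller, then each branch must cross the other's slope on the way to the cusp, producing exactly one Lagrangian crossing just to the left, and the cusp then closes it into the loop of Figure~\ref{fig:convert}. Your phrase ``by a suitable choice of the branches these slopes agree at exactly one point before the cusp'' is pointing at this, but as written it reads as a property of the cusp rather than a consequence of the boundary conditions. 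Once you impose the height-ordered slope condition on the arcs and check the crossing signs (your $\int y\,dx$ remark is the right tool), the rest of your assembly argument goes through.
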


\begin{figure}[htb]
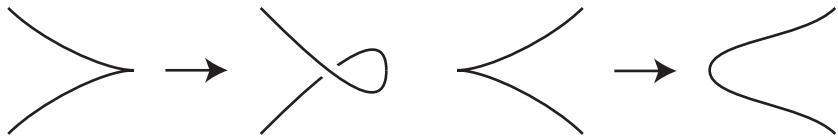
{\tiny
\begin{overpic}{figures/convert}
\end{overpic}}
\caption{Resolution: changing a front projection of a Legendrian knot to a Lagrangian projection.}
\label{fig:convert}
\end{figure}

\subsection{Classical invariants of Legendrian knots}
There are three classical invariants of the Legendrian isotopy type of a Legendrian knot $\Lambda$. The first is the underlying topological knot type. The second is the framing of $\Lambda$ given to it by the contact planes. This is called the Thurston--Bennequin invariant and denoted $\tb(\Lambda)$. In the front projection this is easily computed by 
\[
\tb(\Lambda)= \text{writhe}\, (F(\Lambda)) - \# (\text{right cusps in $F(\Lambda)$)}, 
\]
where the writhe of a knot diagram is simply the number of positive crossings minus the number of negative crossings. In the Lagrangian projection $\tb(\Lambda)$ is simply the writhe of $\Pi(\Lambda)$. 

The final classical invariant of an oriented Legendrian knot $\Lambda$ is its rotation number $\rot(\Lambda)$. It is defined as a relative Euler class, but can again easily be computed in the various projections for $\Lambda$ in $(\R^3,\xi_{std})$. In the front projection 
\[
\rot(\Lambda)= \frac 12 (D - U),
\]
where $U$ and $D$ are the number of up and down cusps of $F(\Lambda)$; these are the cusps where the $z$ coordinate is increasing or decreasing, respectively, when we traverse $F(\Lambda)$ in the direction of its orientation.
In the Lagrangian projection of $\Lambda$ the rotation number is just the degree of the Gauss map for $\Pi(\Lambda)$. 

Given a Legendrian knot $\Lambda$, one can create another Legendrian knot in the same smooth knot type by one of two operations called ``stabilizations''. The {\em $\pm$-stabilization} $S_\pm(\Lambda)$ of $\Lambda$ is a Legendrian knot whose front projection is obtained from the front projection $F(\Lambda)$ by replacing a small arc of the front with a zigzag. The front of $S_\pm(\Lambda)$ has two more cusps than $F(\Lambda)$, and the two stabilizations are distinguished by orientation: for $S_+$, both of the zigzag cusps are down cusps, while for $S_-$, both are up cusps.  By the above formulas for the classical invariants, we have $\tb(S_\pm(\Lambda))=\tb(\Lambda)-1$ while $\rot(S_\pm(\Lambda))=\rot(\Lambda)\pm 1$. 

It can be shown that each of $S_\pm(\Lambda)$ is well-defined up to Legendrian isotopy, independent of the position of the zigzag. In the Lagrangian projection, $S_\pm(\Lambda)$ is obtained from $\Lambda$ by replacing a small arc of the knot diagram by a small loop with a positive crossing. An important property of Legendrian contact homology is that it vanishes for stabilizations; see Section~\ref{quant}.

\section{The Chekanov--Eliashberg DGA}\label{theDGA}
In this section we discuss the definition of the Chekanov--Eliashberg differential graded algebra of a Legendrian knot in $\R^3$. We begin with the classical definition in terms of the Lagrangian projection, followed by discussion of the geometric intuition behind the proof that it is a DGA and is invariant under Legendrian isotopy, and an alternate formulation in terms of the front projection.
We then turn to a discussion of what the Chekanov--Eliashberg DGA can and cannot tell about Legendrian knots. Finally, we consider a third definition of the Chekanov--Eliashberg DGA in terms of symplectizations that will be necessary for our later discussions, and briefly discuss extensions of the theory to other manifolds and dimensions. 

\subsection{The Chekanov--Eliashberg DGA in the Lagrangian projection}\label{firstdef}
Let $\Lambda$ be an oriented Legendrian knot in $(\R^3,\xi_{std})$. We present here the definition of the Chekanov--Eliashberg DGA $(\AlgL,\d_\Lambda)$ of $\Lambda$, or to be precise, 
the ``fully noncommutative'' version of the Chekanov--Eliashberg DGA. We first note that by a generic perturbation of $\Lambda$ through Legendrian knots we can assume the only singularities of the Lagrangian projection $\Pi(\Lambda)$ are transverse double points. 
To define the DGA, we also fix a base point $*$ on $\Lambda$ distinct from the double points.

On any contact manifold equipped with a contact $1$-form $\alpha$, there is a vector field $R_\alpha$, the \textit{Reeb vector field}, determined by $i_{R_\alpha}(d\alpha) = 0$ and $\alpha(R_\alpha)=1$; on standard contact $\R^3$, this is just the vector field $\d/\d z$. Define a \textit{Reeb chord} of $\Lambda$ to be an integral curve for the Reeb vector field with both endpoints on $\Lambda$. In our setting, the Reeb chords of $\Lambda \subset \R^3$ correspond precisely to the (finitely many) double points of $\Pi(\Lambda)$, and we label them $a_1,\ldots,a_n$.

We define $(\AlgL,\d_\Lambda)$ in stages: algebra, grading, and differential.
The algebra $\AlgL$ is the associative, noncommutative, unital algebra over $\Z$ generated by $a_1,\ldots, a_n, t, t^{-1}$, with the only relations being $t\cdot t^{-1} = t^{-1} \cdot t = 1$. We write this as
\[
\AlgL = \Z\langle a_1,\ldots,a_n,t^{\pm 1}\rangle.
\]
This is generated as a $\Z$-module by words in the (noncommuting except for $t,t^{-1}$) letters $a_1,\ldots, a_n, t, t^{-1}$, with multiplication given by concatenation; the empty word is the unit $1$. See Remark~\ref{rmk:dga-flavors} for a discussion of other versions of this algebra. 

The grading on $\AlgL$ is defined as follows. It suffices to associate a degree to each generator of $\AlgL$; then the grading of a word in the generators is the sum of the gradings of the letters in the word. The grading of $t$ is determined by the rotation number of $\Lambda$: $|t|=-2\rot(\Lambda)$ and $|t^{-1}| = 2\rot(\Lambda)$. To define the gradings of the $a_i$ we define the path $\gamma_i$ in $\R^2_{xy}$ to be the path running along $\Pi(\Lambda)$ from the overcrossing of $a_i$ to the undercrossing and missing the base point~$*$. By perturbing the diagram,
we can assume that all the strands of $\Pi(\Lambda)$ meet orthogonally at the crossings, so that the (fractional) number of counterclockwise rotations of the tangent vector to $\gamma_i$ from beginning to end, which we denote by $\rot(\gamma_i)$, will be an odd multiple of $1/4$. We then define the grading on $a_i$ to be
\[
|a_i|=2 \rot(\gamma_i) - 1/2.
\]

To define the differential on the algebra, we first decorate the Lagrangian projection of $\Lambda$. Near each crossing of $\Pi(\Lambda)$, $\R^2_{xy}$ is broken into four quadrants. We associate \textit{Reeb signs} to the quadrants as follows: we label a quadrant with a $+$ if traversing the boundary of a quadrant near $a_i$ in the counterclockwise direction one moves from an understrand to an overstrand and otherwise we label it with a $-$. See Figure~\ref{fig:signs}. We will also need an {\em orientation sign} for each quadrant. 
The orientation sign for a quadrant will be negative if it is shaded in Figure~\ref{fig:signs} and positive otherwise. 
\begin{figure}[htb]
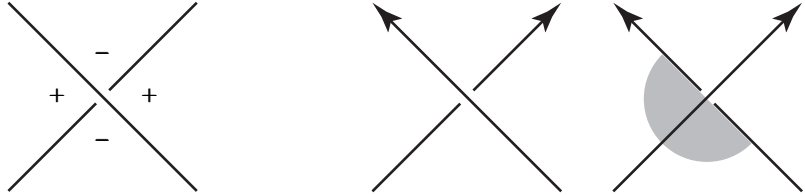
{\tiny
\begin{overpic}{figures/signs1}
\put(35, 20){$\pmb{-}$}
\put(18, 37){$\pmb{+}$}
\put(35, 53){$\pmb{-}$}
\put(53, 37){$\pmb{+}$}
\end{overpic}}
\caption{On the left we see the Reeb chord signs for each quadrant.  On the right we see the orientation signs, which are $-$ in the shaded quadrants and $+$ in the other quadrants. The orientation signs depend on whether the crossing is positive (right) or negative (left). 
}
\label{fig:signs}
\end{figure}

\begin{definition}\label{modulispace}
For $n \geq 0$, let $D^2_n= D^2-\{x, y_1, \ldots, y_n\}$ where $D^2$ is the closed unit disk in $\R^2$ and $x, y_1, \ldots, y_n$ are points in its boundary appearing in counterclockwise order. We call the points removed from $D^2_n$ boundary punctures. Now if $a, b_1, \ldots, b_n$ each take values in $\{a_1,\ldots, a_n\}$ then we define the set 
\[
\Delta(a;b_1,\ldots, b_n) = \{u: (D^2_n,\partial D^2_n)\to (\R^2_{xy}, \Pi(\Lambda)): \text{ satisfying (1) -- (4)}\}/\sim,
\]
where $\sim$ is reparameterization of the domain, and 
\begin{enumerate}
\item $u$ is an immersion,
\item $u$ sends the boundary punctures to the crossings of $\Pi(\Lambda)$,
\item $u$ sends $x$ to $a$ and a neighborhood of $x$ is mapped to a quadrant of $a$ labeled with a $+$ Reeb sign,
\item for $i=1,\ldots,n$, $u$ sends $y_i$ to $b_i$ and a neighborhood of $y_i$ is mapped to a quadrant of $b_i$ labeled with a $-$ Reeb sign.
\end{enumerate}
\end{definition}

Examples of such disks may be seen in Figures~\ref{fig:trefoil} and~\ref{fig:CHex}. 
One may check that if $\Delta(a;b_1,\ldots, b_n)$ is nonempty then 
\[
|a|-\sum_{i=1}^n |b_i| = 1.
\]
Given $u\in \Delta(a;b_1,\ldots, b_n)$ notice that the image of $\partial D_l^2$ is a union of $n+1$ paths $\eta_0, \ldots, \eta_n$ in $\Pi(\Lambda)$ where $\eta_0$ starts at $a$ and $\eta_i$ starts at $b_i$ (here the $\eta_i$ inherit an orientation from $D^2_n$). Let $t(\eta_i)$ be $t^k$ where $k$ is the number of times $\eta_i$ crosses the base point $*$ counted with sign according to the orientation on $\Lambda$. Associated to $u$ we have a word in $\AlgL$,
\[
w(u)= t(\eta_0)b_1 t(\eta_1)b_2\cdots b_n t(\eta_n),
\]
along with a sign, 
\[
\epsilon(u)= \epsilon(a) \prod_{i=1}^n \epsilon(b_i),
\]
where $\epsilon(c)$ for a corner $c$ is the orientation sign of the quadrant that $u$ covers at $c$. 

We can now define the differential $\d_\Lambda :\thinspace \AlgL \to \AlgL$. For $a\in \{a_1,\ldots, a_n\}$, define
\[
\partial_\Lambda(a) = \sum_{\text{\small $n\geq 0, b_1,\ldots b_n$ double points}\atop \text{\small $u\in \Delta(a; b_1,\ldots, b_n)$}} \epsilon(u) w(u).
\]
Define $\partial_\Lambda(t)=\partial_\Lambda(t^{-1})=0$ and now extend $\partial_\Lambda$ to all of $\AlgL$ by the signed Leibniz rule
\[
\partial_\Lambda (ww')= (\partial_\Lambda w)w' + (-1)^{|w|} w(\partial_\Lambda w').
\]

\begin{remark}
The fact that $\d_\Lambda(a)$ is a finite sum essentially comes from considering heights of Reeb chords. If $a$ is a double point in $\Pi(\Lambda)$, define the height $h(a)>0$ to be the difference of the $z$ coordinates of the two points on $\Lambda$ over $a$. 
\label{rmk:filtration}
If $u \in \Delta(a;b_1,\ldots,b_n)$, then by Stokes' Theorem,
\[
h(a)-\sum_{i=1}^n h(b_i)= \int_{D^2_n} u^*(dx\wedge dy) > 0.
\]
It follows that for fixed $a$, $\Delta(a;b_1,\ldots,b_n)$ can be nonempty only for finitely many choices of $b_1,\ldots,b_n$, and from there that $\d_\Lambda(a)$ is finite.
\end{remark}

This completes the definition of the Chekanov--Eliashberg DGA $(\AlgL,\d_\Lambda)$. We will state the main invariance result for this DGA in Section~\ref{ssec:invariance} below. First we make some comments about the history of versions of this DGA and present a few examples. 

\begin{remark}
The Chekanov--Eliashberg DGA was first introduced as a DGA over $\Z_2$ by Chekanov \cite{Che}; to obtain the original version from the version described above, set $t=1$ and reduce mod $2$. The DGA was subsequently lifted to a DGA over $\Z[t,t^{-1}]$ in \cite{ENS} (note that the capping paths used there are slightly different from here, but yield an isomorphic DGA). Another choice of signs was discovered in \cite{EES-ori} and the two choices were subsequently proven to give isomorphic DGAs \cite{Ng-SFT}. In the DGA over $\Z[t,t^{-1}]$, $t$ commutes with Reeb chord generators (though Reeb chords do not commute with each other), but this condition does not need to be imposed to produce a Legendrian invariant. If we stipulate that $t$ does not commute with Reeb chords, we obtain the fully noncommutative DGA presented here, which has certain advantages over the various quotients discussed in this remark that we will mention later. Some of the first appearances of the fully noncommutative DGA in the literature are in \cite{EENS,NR}. 
\label{rmk:dga-flavors}

Finally, we note that there is another version of the DGA, the ``loop space DGA'', which is more elaborate than the fully noncommutative DGA described here. This is due to Ekholm and Lekili \cite{EL}, and powers of $t$ are replaced by chains in the loop space of the Legendrian $\Lambda$. Roughly speaking, there is a relation between this loop space DGA and the usual Chekanov--Eliashberg DGA corresponding to passing to homology in the loop space. See \cite{EL} for details.
\end{remark}

\begin{remark}
To streamline the discussion, we have restricted our definition of the DGA to single-component Legendrian knots. However, this is easily extended to oriented Legendrian links in $\R^3$, with a few modifications. The main change is that we now need to choose a base point on each component, and the algebra is now $\Z\langle a_1,\ldots,a_n,t_1^{\pm 1},\ldots,t_r^{\pm 1}\rangle$ where $a_1,\ldots,a_n$ are the crossings of the link diagram and $r$ is the number of components. 
\label{rmk:links}
The differential is as usual, with the parameters $t_1,\ldots,t_r$ counting instances where disk boundaries pass through the $r$ marked points. 

One other difference from the knot case is that the grading for the Chekanov--Eliashberg DGA of a link is not well-defined, because the paths $\gamma_i$ are only defined for crossings involving a single component. 
One common way to fix a grading on the DGA of a link $\Lambda$ is to choose a Maslov potential on its front projection $F(\Lambda)$. This is a locally constant map $m :\thinspace \Lambda - (F^{-1}(\text{cusps}) \cup \{\text{base points}\}) \to \Z$ that increases by $1$ when we pass through a cusp of $F(\Lambda)$ going upwards, and decreases by $1$ when we pass through a cusp going downwards. Given a Maslov potential, we can grade the DGA associated to the front projection of $\Lambda$, see Section~\ref{frontdef} below, as follows. Generators of this DGA are crossings and right cusps of $F(\Lambda)$, along with $t_i^{\pm 1}$. We define the grading of $t_i$ to be $2\rot(\Lambda_i)$ where $\Lambda_i$ is the $i$-th component; the grading of all right cusps to be $1$; and the grading of a crossing $a$ to be $m(a_-)-m(a_+)$, where $a_-$ is the strand at $a$ with more negative slope and $a_+$ is the strand with more positive slope. See e.g.\ \cite{Ng-CLI} for a version of this approach.
\end{remark}

\begin{figure}[htb]
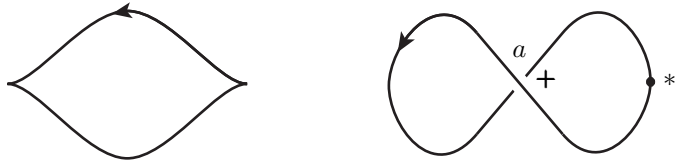
{\small
\begin{overpic}{figures/unknot}
\put(191, 39){$a$}
\put(200, 28){$\pmb{+}$}
\put(248, 28){$*$}
\end{overpic}}
\caption{The standard Legendrian unknot $\Lambda$, in the front (left) and Lagrangian (right) projections. On the right, we have added a base point, and drawn the Reeb signs at the unique double point $a$; because $a$ is a negative crossing, all orientation signs are $+$.}
\label{fig:unknot}
\end{figure}

\begin{example} \label{ex:unknot}
Let $\Lambda$ denote the Legendrian unknot shown in Figure~\ref{fig:unknot}. 
This is the ``standard Legendrian unknot'' with $\tb(\Lambda)=1$ and $\rot(\Lambda)=0$. There is one double point $a$, with grading $|a|=1$, and $\AlgL$ is generated by $a$ and $t^{\pm 1}$ with $|t|=0$. The differential $\d_\Lambda$ is completely determined by $\d_\Lambda(a)$, and this in turn has contributions from two disks corresponding to the two lobes of the figure eight. One of these does not pass through the base point $*$, while the other passes through $*$ once, opposite to the orientation of $\Lambda$. It follows that
\[
\d_\Lambda a = 1+t^{-1}.
\]
\end{example}

\begin{figure}[htb]
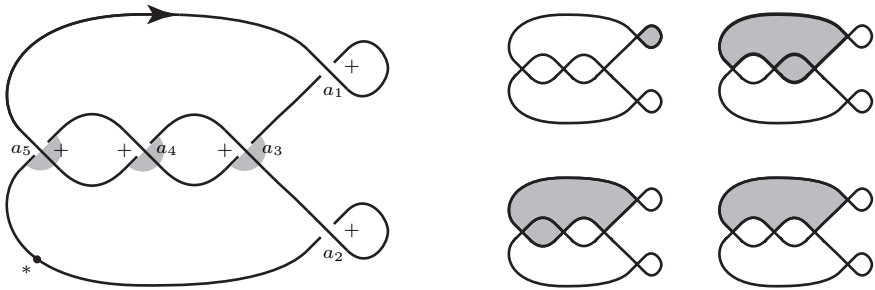
{\tiny
\begin{overpic}{figures/trefoil}
\put(120, 11){$a_2$}
\put(120, 73){$a_1$}
\put(97, 51){$a_3$}
\put(80, 50){$+$}
\put(57, 51){$a_4$}
\put(42, 50){$+$}
\put(2, 51){$a_5$}
\put(18, 50){$+$}
\put(128, 82){$+$}
\put(128, 20){$+$}
\put(6, 5){$*$}
\end{overpic}}
\caption{The Lagrangian projection of a Legendrian trefoil knot $\Lambda$ is shown on the left. For each of the double points, the Reeb sign of one of the quadrants is shown (from which the others are easily deduced), and orientation signs are indicated by the shaded quadrants. On the right, the disks that go into the computation of $\partial_\Lambda a_1$.}
\label{fig:trefoil}
\end{figure}
\begin{example}\label{ex:trefoil}
We next consider the right handed trefoil $\Lambda$ shown in Figure~\ref{fig:trefoil}, which has $\tb(\Lambda)=1$ and $\rot(\Lambda)=0$.
The DGA is generated by the five double points labeled $a_1,\ldots, a_5$ with gradings 
\begin{gather*}
|a_1|=|a_2|=1 \\
|a_3|=|a_4|=|a_5|=0.
\end{gather*}
Figure~\ref{fig:trefoil} depicts the four disks that contribute to $\partial_\Lambda a_1$, yielding terms
(left to right, top to bottom) $1$, $a_5$, $a_3$, and $a_5a_4a_3$. One can similarly calculate the differential of $a_2$ (here $3$ of the $4$ disks pass through the marked point in a direction agreeing with the orientation of $\Lambda$, contributing a $t$ factor to the corresponding terms in $\d_\Lambda a_2$), with the conclusion that the differential $\d_\Lambda$ is given as follows:
\begin{align*}
\partial_\Lambda a_1 &= 1 +a_3+a_5+ a_5a_4a_3,\\
\partial_\Lambda a_2 &= 1-a_3t-a_5t-a_3a_4a_5t,\\
\partial_\Lambda a_3 &= \partial_\Lambda a_4=\partial_\Lambda a_5=0.
\end{align*}
\end{example}

\begin{figure}[htb]
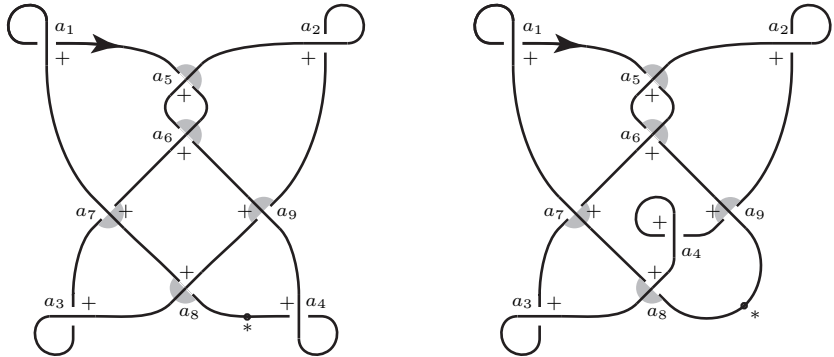
{\tiny
\begin{overpic}{figures/chekanovpair}
\put(18, 123){$a_1$}
\put(111, 123){$a_2$}
\put(14, 19){$a_3$}  
\put(113, 19){$a_4$}
\put(55, 104){$a_5$}
\put(55, 82){$a_6$}
\put(26, 53){$a_7$}
\put(65, 15){$a_8$}
\put(102, 53){$a_9$}
\put(18, 111){$+$}
\put(111, 111){$+$}
\put(28, 18){$+$}
\put(103, 18){$+$}
\put(64, 97){$+$}
\put(64, 75){$+$}
\put(42, 53){$+$}
\put(65, 30){$+$}
\put(87, 53){$+$}
\put(89, 9){$*$}							
\put(195, 123){$a_1$}
\put(288, 123){$a_2$}
\put(191, 19){$a_3$}
\put(255, 38){$a_4$}
\put(232, 104){$a_5$}
\put(232, 82){$a_6$}
\put(203, 53){$a_7$}
\put(242, 15){$a_8$}
\put(279, 53){$a_9$}
\put(195, 111){$+$}
\put(288, 111){$+$}
\put(204, 18){$+$}
\put(244, 49){$+$}
\put(241, 97){$+$}
\put(241, 75){$+$}
\put(219, 53){$+$}
\put(242, 30){$+$}
\put(264, 53){$+$}
\put(281, 14){$*$}
\end{overpic}}
\caption{The Lagrangian projection of the two Chekanov knots. On the left is $\Lambda_1$ and on the right is $\Lambda_2$. For each of the double points, the Reeb sign of one of the quadrants is shown, and quadrants with negative orientation signs are shaded.}
\label{fig:CHex}
\end{figure}
\begin{example}\label{ex:chex}
Here we consider the Chekanov $m(5_2)$ knots, a famous pair of Legendrian knots that were the first examples of Legendrian knots with the same classical invariants to be proved to be distinct \cite{Che}.
These are shown in Figure~\ref{fig:CHex}; they are both of topological type $m(5_2)$ (the mirror of $5_2$), and it is easy to check that they both have $\tb=1$ and $\rot=0$.
Each knot diagram has nine crossings. 
The gradings for the crossings of $\Lambda_1$ are
\begin{gather*}
|a_1|=|a_2|=|a_3|=|a_4|=1, \\
|a_5|= 2, \\ |a_6|=-2,\\
|a_7|=|a_8|=|a_9|=0
\end{gather*}
and the differential is
\begin{align*}
\partial_{\Lambda_1} a_1 &= 1 + a_7 + a_7a_6a_5,\\
\partial_{\Lambda_1} a_2 &= 1- a_9 - a_5a_6a_9,\\
\partial_{\Lambda_1} a_3 &= 1+a_8a_7,\\
\partial_{\Lambda_1} a_4 &= 1 + a_9a_8t^{-1},\\
\partial_{\Lambda_1} a_ i &=0, \quad i\geq 5.
\end{align*}

\begin{figure}[htb]
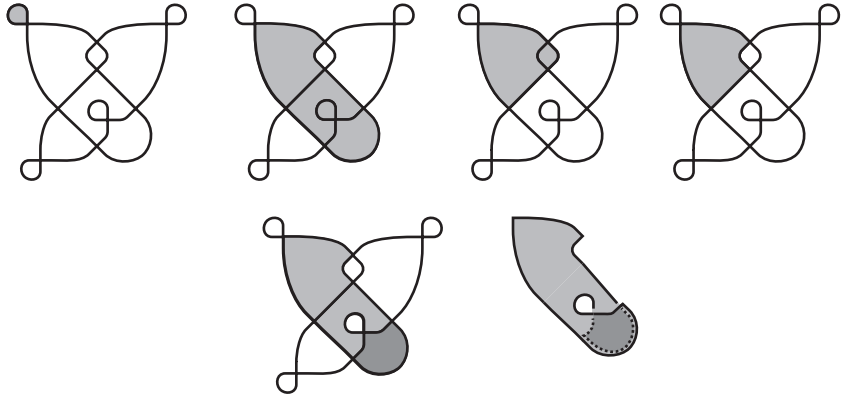
{\tiny
\begin{overpic}{figures/DisksForChekanov}
\end{overpic}}
\caption{The five disks that go into the computation of $\partial_\Lambda a_1$ for the Chekanov example knot $\Lambda_2$ from Figure~\ref{fig:CHex}. The disk on the left of the bottom row is immersed, and
the darker shaded part indicates where the immersion is two-to-one. The final picture on the bottom row is another view of this immersed disk: the boundary of the disk is slightly offset where the immersion is two-to-one to better indicate the overlapping region.}
\label{fig:diskCH}
\end{figure}

For $\Lambda_2$, the gradings are
\begin{gather*}
|a_1|=|a_2|=|a_3|=|a_4|=1, \\
|a_5|=  |a_6|= |a_7|=|a_8|=|a_9|=0
\end{gather*}
(for future reference, note the lack of crossings of degree $\pm 2$, cf.\ $\Lambda_1$). The differential for $\Lambda_2$ is a bit trickier to visualize than in the previous examples because one of the immersed disks is not embedded. Specifically, Figure~\ref{fig:diskCH} shows the $5$ disks that contribute to $\partial_{\Lambda_2} a_1$, and the last of these is not embedded. The full differential is:
\begin{align*}
\partial_{\Lambda_2} a_1 &= 1 + a_5+a_7 + a_7a_6a_5+ t^{-1}a_9a_8t^{-1} a_5,\\
\partial_{\Lambda_2} a_2 &= 1- a_9 - a_5a_6a_9,\\
\partial_{\Lambda_2} a_3 &= 1+a_8a_7,\\
\partial_{\Lambda_2} a_4 &= 1 + a_8t^{-1}a_9,\\
\partial_{\Lambda_2} a_ i &=0, \quad i\geq 5.
\end{align*}

\end{example}

\subsection{$\d^2=0$ and invariance}
\label{ssec:invariance}

We now state the two basic properties of the Chekanov--Eliashberg DGA $(\AlgL,\d_\Lambda)$, which can be summarized as ``$\d^2=0$'' and ``invariance''. Versions of these results were proved combinatorially in \cite{Che,ENS} and analytically in \cite{EES}.

\begin{theorem}\label{thm:d2}
Given an oriented Legendrian knot $\Lambda$ in $(\R^3,\xi_{std})$ and a base point $*\in \Lambda$, we have that $\partial_\Lambda$ lowers degree by $1$ and $\partial_\Lambda \circ \partial_\Lambda =0$. Thus $(\AlgL, \partial_\Lambda)$ has the structure of a differential graded algebra with gradings taking values in $\Z$.
\end{theorem}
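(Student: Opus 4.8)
The statement has two essentially independent parts --- that $\partial_\Lambda$ is homogeneous of degree $-1$ and that $\partial_\Lambda^2=0$ --- and since $\partial_\Lambda$ is defined on generators and extended by the signed Leibniz rule, with $\partial_\Lambda t^{\pm1}=0$, in each case I would reduce at once to the Reeb chord generators $a_i$. That the grading is $\Z$-valued needs no argument: $\rot(\gamma_i)$ is an odd multiple of $1/4$, so $|a_i|=2\rot(\gamma_i)-1/2\in\Z$, and $|t|=2\rot(\Lambda)\in\Z$. For the degree claim I would run the winding-number computation underlying the ``one may check'' remark following Definition~\ref{modulispace}: for $u\in\Delta(a;b_1,\dots,b_n)$, the tangent vector along the boundary of the immersed disk $u$ has total turning $2\pi$, and writing this as the turning along the boundary arcs $\eta_0,\dots,\eta_n$ (closed up using the capping paths $\gamma_a,\gamma_{b_i}$) plus the exterior angles $\pm\pi/2$ at the $n+1$ corners gives $|a|-|w(u)|=1$. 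The powers of $t$ cause no trouble: when an arc $\eta_i$ crosses the base point it contributes a factor $t^{\pm1}$ to $w(u)$, and since the tangent vector of $\Pi(\Lambda)$ turns by $2\pi\rot(\Lambda)$ over one traversal, that crossing shifts the winding count by exactly $2\rot(\Lambda)=|t^{\pm1}|$, so the identity persists including the $t$-factors.

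For $\partial_\Lambda^2=0$ I would use the standard ``count the ends of a one-dimensional moduli space'' argument in its combinatorial form. Expanding $\partial_\Lambda^2 a$ by the Leibniz rule, every term comes from a pair $u_1\in\Delta(a;b_1,\dots,b_n)$, $u_2\in\Delta(b_j;c_1,\dots,c_m)$ by substituting $\partial_\Lambda b_j$ for the letter $b_j$ in $w(u_1)$. Such a pair glues along the common Reeb chord $b_j$: the $-$ quadrant of $b_j$ covered by $u_1$ and the $+$ quadrant covered by $u_2$ are adjacent (Reeb signs alternate around a crossing), share exactly one boundary ray, and --- the $\pm$ conditions in Definition~\ref{modulispace} being designed so the orientations match --- gluing there produces an immersed disk $U=u_1\#_{b_j}u_2$ with one positive corner at $a$, negative corners $\{b_i\}_{i\neq j}\cup\{c_i\}$, with $|a|-\sum(\text{negative corners})=2$, and with boundary running straight through $b_j$. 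Tracking the base-point crossings (which add under gluing) and the orientation signs, the $(u_1,u_2)$-term of $\partial_\Lambda^2 a$ is exactly what this broken disk $U$ records, so $\partial_\Lambda^2 a$ is a signed count of broken disks.

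I would then fix an admissible output list with $|a|-\sum=2$ and consider the moduli space $\mathcal M$ of immersed disks with one positive corner at $a$ and that list of negative corners. By the area identity of Remark~\ref{rmk:filtration} the areas of these disks are bounded, which yields compactness and also forbids the collapsing or bubbling phenomena that would otherwise occur at the boundary; $\mathcal M$ should be a compact $1$-manifold whose only ends are the broken disks $U$ above --- a degeneration in which the disk pinches at a crossing and splits into a two-story configuration. A compact $1$-manifold has evenly many boundary points, paired off by its interval components; reading off the induced signs at the two ends of each interval, the two broken disks there contribute canceling terms to $\partial_\Lambda^2 a$, and summing over all words gives $\partial_\Lambda^2 a=0$.

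The step I expect to be the main obstacle is the sign verification: one must check that the combinatorial orientation signs of Figure~\ref{fig:signs} --- equivalently, a coherent orientation of the moduli spaces --- are arranged so that the two ends of each interval of $\mathcal M$ carry opposite signs. Over $\Z_2$ this is vacuous and the argument above is already complete (this is Chekanov's original proof \cite{Che}); with signs it is a delicate local computation at the gluing crossing, and it is precisely what forces the placement of the shaded quadrants in Figure~\ref{fig:signs}. I would follow the combinatorial sign analysis of \cite{ENS} (with \cite{EES-ori} for the alternative convention), or the analytic route of \cite{EES}, where $\partial_\Lambda$ counts rigid $J$-holomorphic disks in the symplectization, SFT compactness and a gluing theorem supply the $1$-manifold-with-boundary structure, and coherent orientations provide the signs.
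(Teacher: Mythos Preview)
Your proposal is correct and follows essentially the same approach as the paper: both argue $\partial_\Lambda^2=0$ by identifying terms of $\partial_\Lambda^2 a$ with broken disks arising as ends of a one-dimensional moduli space, then invoking that a compact $1$-manifold has evenly many (signed) boundary points, with the sign analysis deferred to \cite{ENS,EES-ori,EES}. One point the paper makes explicit that you leave implicit: your space $\mathcal{M}$ of ``immersed disks'' with index $|a|-\sum|b_i|=2$ is actually \emph{empty} if you use Definition~\ref{modulispace} verbatim (honest immersions with all convex corners force index $1$); the paper's $1$-dimensional space $\widehat{\Delta}(a;b_1,\dots,b_n)$ enlarges this by allowing boundary branch points and non-convex corners, and it is the sliding of the branch point that provides the $1$-parameter family whose endpoints are your broken disks $u_1\#_{b_j}u_2$.
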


\begin{remark}
All of the examples given in Section~\ref{firstdef} trivially satisfy $\d_\Lambda^2=0$. An example where this nontrivially holds is given in Appendix~\ref{app:unknotDGA}; for a simpler example, see the figure eight knot in \cite[Example~4.17]{E}.
\end{remark}

If we change $\Lambda$ by Legendrian isotopy, the DGA $(\AlgL,\d_\Lambda)$ changes in a prescribed way called \textit{stable tame isomorphism}, a somewhat involved notion due to Chekanov that we now define.
First, an {\em elementary automorphism} of a DGA $(\Z\langle a_1,\ldots, a_n, t^{\pm 1}\rangle, \partial)$ is a chain map $\phi:\thinspace \Z\langle a_1,\ldots, a_n, t^{\pm 1}\rangle\to\Z\langle a_1,\ldots, a_n, t^{\pm 1}\rangle$ for which there is some $1\leq j\leq n$ such that the map has the following form:
\begin{align*}
\phi(a_j) &= \pm t^k a_j t^{\ell} + u, &\quad u&\in \Z\langle a_1,\ldots,\widehat{a}_j,\ldots,a_n,t^{\pm 1}\rangle, ~k,\ell\in\Z \\
\phi(a_i) &= a_i, & i&\neq j \\
\phi(t) &= t.
\end{align*}
Note that elementary automorphisms are in particular invertible. A {\em tame isomorphism} between two DGAs $(\Z\langle a_1,\ldots, a_n, t^{\pm 1}\rangle, \partial)$ and $(\Z\langle a'_1,\ldots, a'_n, t^{\pm 1}\rangle, \partial)$ is a chain map given by a composition of some number of elementary automorphisms of $(\Z\langle a_1,\ldots, a_n, t^{\pm 1}\rangle, \partial)$ and the algebra map sending $t \mapsto t$ and $a_i \mapsto a_i'$ for all $i$.

The {\em grading $k$ stabilization} of the DGA $(\Z\langle a_1,\ldots, a_n, t^{\pm 1}\rangle, \partial)$ is the algebra $\Z\langle e_k, e_{k-1}, a_1,\ldots, a_n, t^{\pm 1}\rangle$ where $|e_k|=k$ and $|e_{k-1}|=k-1$, equipped with the differential $\partial$ agreeing with the original differential $\partial$ on the $a_i$ and satisfying $\d(e_k) = e_{k-1}$, $\d(e_{k-1})=0$.

Finally, two DGAs are {\em stable tame isomorphic} if after each is stabilized some number of times, they become tame isomorphic. We can now state the invariance result.

\begin{theorem}\label{maininvariance}
 The stable tame isomorphism type of $(\AlgL, \partial_\Lambda)$ is an invariant of $\Lambda$ under Legendrian isotopy and choice of base point. 
\end{theorem}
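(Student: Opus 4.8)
The plan is to reduce the theorem to a short list of local modifications of the Lagrangian projection and to write down a stable tame isomorphism realizing each of them. As recalled in Section~\ref{prelims}, any Legendrian isotopy of $\Lambda$ decomposes as a composition of ambient planar isotopies of the immersed curve $\Pi(\Lambda)$ together with Legendrian Reidemeister moves of types~II and~III; and any two base points on $\Lambda$ are joined by a path whose projection passes through the crossings of $\Pi(\Lambda)$ one at a time. So it suffices to exhibit the isomorphism for four kinds of moves: (a) a planar isotopy fixing the combinatorial type of the decorated diagram, (b) sliding the base point $*$ past one strand at a crossing, (c) a Reidemeister~II (self-tangency) move, and (d) a Reidemeister~III (triple point) move.

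For~(a), the crossing set, the gradings, the Reeb and orientation signs, and the combinatorial types of the capping paths are all unchanged, so the only thing that could move is the signed count $\sum_u\epsilon(u)\,w(u)$ defining $\d_\Lambda$ on a generator. But these disks may be taken holomorphic for the standard complex structure on $\R^2_{xy}=\C$, they have positive and (along the compact isotopy) uniformly bounded area by Remark~\ref{rmk:filtration}, and their corners are fixed; a standard compactness argument then shows that the associated moduli spaces fit together into a compact cobordism along the isotopy in which no disk bubbles, degenerates, or escapes to infinity, so the count is constant and the DGA is literally unchanged. For~(b), pushing $*$ past one strand at a crossing $a_j$ alters each capping path through that strand by a single loop around $\Lambda$; feeding this into the formula for $w(u)$ shows that the new DGA is tame isomorphic to $(\AlgL,\d_\Lambda)$ via the elementary automorphism $a_j\mapsto t^{\pm1}a_j$ or $a_j\mapsto a_j t^{\pm1}$, the exponent being read off from the over/under data and the orientation of $\Lambda$. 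One checks directly that this is a chain map, and composing such moves connects any two positions of $*$.

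For~(c), a Reidemeister~II move creates two crossings $e_k,e_{k-1}$ with $|e_k|=|e_{k-1}|+1$ joined by an embedded bigon that contributes $\d e_k = \pm e_{k-1}+(\text{terms in the old generators})$, while every other disk of the new diagram either avoids the bigon — and then corresponds to a disk of the old one — or enters it through one of the two new corners. Organizing the latter contributions into a single tame isomorphism, assembled one generator at a time with $e_{k-1}$ playing the role of the ``scaffolding'' introduced by a stabilization, shows that the new DGA is tame isomorphic to the grading-$k$ stabilization of the old one. This is the only step that uses stabilizations, and it is what forces the equivalence relation in the theorem to be \emph{stable} tame isomorphism rather than tame isomorphism.

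The main obstacle is~(d). Here the generators, the gradings, and the signs are all unaffected, so no stabilization is needed; instead one must produce a genuine tame isomorphism between the DGAs before and after the move. Since the move is supported in a small disk around the triple point, one enumerates the finitely many ways a disk can interact with that region — the cases indexed by the cyclic orientations of the three strands, the distribution of Reeb signs among the quadrants at the three crossings, and whether a disk has a corner in the region or runs straight through it — and matches the before and after pictures; the conclusion is that $\d_\Lambda$ changes by a tame isomorphism, typically an elementary automorphism modifying a single one of the three participating crossings. Carrying out this case analysis while tracking the orientation signs of Figure~\ref{fig:signs} so that the candidate map is actually a chain map is the delicate, computation-heavy heart of the proof; this is the content of~\cite{Che} over $\Z_2$ and of~\cite{ENS,EES} with integer coefficients and signs. (A more conceptual alternative is to pass to the symplectization definition of $(\AlgL,\d_\Lambda)$ discussed later in Section~\ref{theDGA}: a Legendrian isotopy then produces an exact Lagrangian cylinder in the symplectization whose induced cobordism map is a stable tame isomorphism. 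But the Reidemeister-move argument above is the one intrinsic to the combinatorial definition we have given.)
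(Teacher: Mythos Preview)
Your proposal is correct and follows essentially the same combinatorial strategy as the paper: reduce to planar isotopy, the double point (Reidemeister~II) move yielding a stabilization plus tame isomorphism, and the triple point (Reidemeister~III) move yielding a tame isomorphism via a case analysis on Reeb signs, with the detailed verification deferred to \cite{Che,ENS} and the geometric bifurcation alternative to \cite{EES}. One small remark: for step~(a) the paper simply notes that ambient planar isotopy changes none of the combinatorial data entering the definition of $(\AlgL,\d_\Lambda)$, so no compactness or cobordism argument is needed---the immersed disks of Definition~\ref{modulispace} are finite combinatorial objects determined by their corner data and are in evident bijection before and after; your holomorphic-curve language there is imported from the symplectization picture and is unnecessary (though not incorrect) for the combinatorial definition at hand.
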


One may readily check (see e.g.\ \cite{ENS}) that stable tame isomorphism is a special case of chain homotopy equivalence and thus quasi-isomorphism. (See Remark~\ref{rmk:trivial} for an example where quasi-isomorphism does not imply stable tame isomorphism.) It follows that the homology $H_*(\AlgL,\d_\Lambda)$, the \textit{Legendrian contact homology} of $\Lambda$, is invariant under Legendrian isotopy.

We now provide a sketch of the proofs of the $\d^2=0$ (Theorem~\ref{thm:d2}) and invariance (Theorem~\ref{maininvariance}) results. We begin with invariance. There is a combinatorial proof of invariance, originally due to Chekanov, that checks 
that if the Lagrangian projection $\Pi(\Lambda)$ undergoes ambient isotopy in $\R^2$, a double point move, or a triple point move (see Figure~\ref{fig:Lreid} and the discussion around it), then $(\AlgL,\partial_\Lambda)$ changes by a stable tame isomorphism. Clearly ambient isotopy does not change any relevant data in the definition of $(\AlgL,\partial_\Lambda)$. It turns out there are several triple points moves one must consider depending on the Reeb sign of the quadrants one sees in the local picture of the move, Figure~\ref{fig:Lreid}. One may check that in each case the DGA is unchanged or undergoes a tame isomorphism. For a double point move one may also check that the algebra undergoes a stabilization followed by a tame isomorphism.  See \cite{Che, ENS}. We remark that there is also a more geometric proof of Theorem~\ref{maininvariance} that closely resembles a standard bifurcation argument for invariance of Floer homology, see \cite{EES}.

The proof of $\d^2=0$ in Theorem~\ref{thm:d2} is a fairly standard ``Morse--Floer'' type argument that is less technical than invariance, and we discuss it more fully here. 
Recall $\partial_\Lambda$ is computed by computing ``rigid'' (i.e., appearing in $0$-dimensional moduli spaces) immersions of a disk with boundary on $\Pi(\Lambda)$. We will see below that if one considers (the closure of) a $1$-dimensional space of immersed disks, then in their boundaries one sees terms contributing to $\partial_\Lambda^2$, and indeed all such terms are in the boundary of some $1$-dimensional space of disks. Thus since the signed count of the points in the boundary of an oriented $1$-dimensional manifold is $0$, it follows that $\partial_\Lambda^2=0$.

To give some details on $\d^2=0$, suppose we consider the space 
\[
\widehat{\Delta}(a;b_1,\ldots, b_n) = \{u: (D^2_n,\partial D^2_n)\to (\R^2_{xy}, \Pi(\Lambda)): \text{ satisfying (1) -- (4)}\}/\sim,
\]
where $\sim$ is reparameterization, and 
\begin{enumerate}
\item $u$ is an immersion on the interior of $D^2_n$ and has a finite number of branched points on $\partial D^2_n$,
\item $u$ sends the boundary punctures to the crossings of $\Pi(\Lambda)$,
\item $u$ sends $x$ to $a$ and a neighborhood of $x$ is mapped to a quadrant of $a$ labeled with a $+$ or covers three quadrants with two labeled with a $+$,
\item $u$ sends $y_i$ to $b_i$ and the image of a neighborhood of $y_i$ either covers one quadrant at $b_i$ labeled with a $-$, or covers three quadrants with two quadrants labeled with a $-$.
\end{enumerate}
Notice that this is the same space as ${\Delta}(a;b_1,\ldots, b_n)$ except we now allow disks with locally non-convex corners and branched points along the boundary. See Figure~\ref{fig:dsquared}.

\begin{figure}[htb]
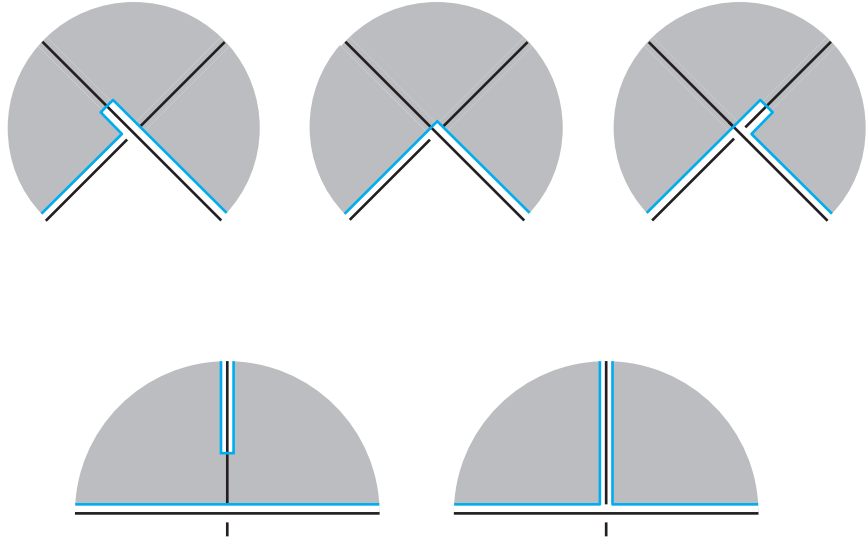
{\tiny
\begin{overpic}{figures/dsquared2}
\end{overpic}}
\caption{The new types of disks in $\widehat{\Delta}(a;b_1,\ldots, b_n)$. Along the top row we see a disk with a branch point on the right and left, and in the center we see a non-convex corner; the result is that a disk with such a non-convex corner is in the interior of a moduli space of dimension larger than 0. On the bottom row we see a disk with a branch point moving toward the boundary of the disk, at which point it limits to the union of two disks.}
\label{fig:dsquared}
\end{figure}

As with ${\Delta}(a;b_1,\ldots, b_n)$, the dimension of $\widehat{\Delta}(a;b_1,\ldots, b_n) $ is given by 
\[
\left(|a|-\sum_{i=1}^n |b_i|\right) -1,
\]
see \cite{ENS}.
One may also check that the dimension of $\widehat{\Delta}(a;b_1,\ldots, b_n)$ is simply the number of branch points plus the number of non-convex corners. It is easy to see that the branch point can slide along $\Pi(\Lambda)$ and hence such a disk will be in a family of disks with a degree of freedom coming from the branch point. Moreover, as shown in Figure~\ref{fig:dsquared}, a non-convex corner is part of a family of disks with branch points.

We now notice that if a sequence of disks has a branch point that approaches an edge of the disk, as shown in the bottom row of Figure~\ref{fig:dsquared}, then it will limit to the union of the image of two disks, each of which has fewer branch points that the disks in the original sequence. We call the union of these two disks a broken disk. So if $\widehat{\Delta}(a;b_1,\ldots, b_n)$ is one dimensional then we can compactify it by adding broken disks. With a little thought one can see that any term in $\partial_\Lambda^2 a$ is a broken disk that is in the boundary of some $1$-dimensional $\widehat{\Delta}(a;b_1,\ldots, b_n)$. The boundary components cancel in pairs in $\partial_\Lambda^2 a$ and it follows that  $\partial_\Lambda^2 a=0$.

\begin{remark}\label{whyalg}
Those familiar with standard Floer theory for pairs of embedded Lagrangian submanifolds might expect that instead of an algebra we could just define our chain complex to be a vector space generated by the double points in $\Pi(\Lambda)$, with the differential counting immersed disks with one positive and one negative puncture. However, we are forced to consider the full algebra because the two cancelling ends of a $1$-dimensional moduli space may have different combinatorics. As an example, see Figure~\ref{fig:dsquaredex}. The figure on the left consists of a broken disk where each of the two disks has one positive and one negative corner, as in Lagrangian Floer theory. It is however cancelled by the figure on the right, which is a broken disk where one disk has two negative corners and the other has none. This illustrates the need for disks with arbitrary numbers of negative corners to ensure $\partial_\Lambda^2=0$.

\begin{figure}[ht]
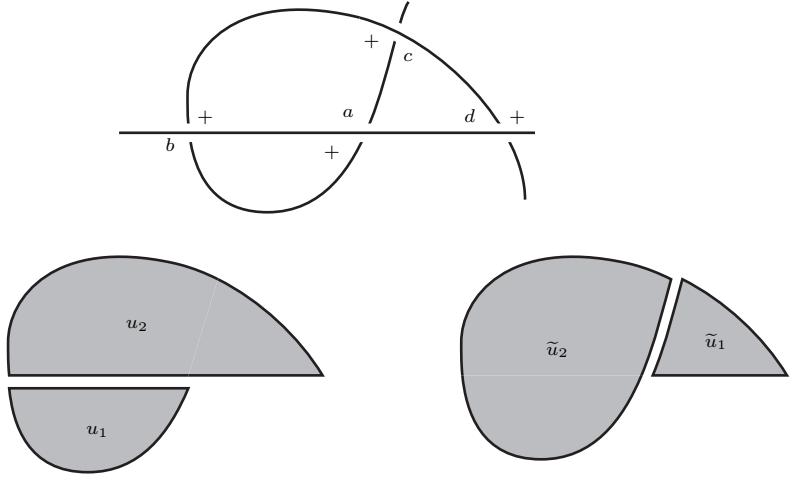
{\tiny
\begin{overpic}{figures/dsquaredex}
\put(120, 120){$+$}
\put(72, 133){$+$}
\put(135, 162){$+$}
\put(190, 133){$+$}
\put(127, 135){$a$}
\put(60, 122){$b$}
\put(150, 156){$c$}
\put(173, 133){$d$}
\put(30, 15){$u_1$}
\put(45, 55){$u_2$}
\put(264, 48){$\widetilde u_1$}
\put(204, 45){$\widetilde u_2$}
\end{overpic}}
\caption{The top diagram is a portion of some Lagrangian projection $\Pi(\Lambda)$. On the bottom are disks contributing to $\partial_\Lambda a$. On the left we see $u_1$ contributes $b$ to $\partial_\Lambda a$, while $u_2$ contributes $d$ to $\partial_\Lambda^2 a$. On the left $\widetilde u_1$ contributes $dc$ to $\partial_\Lambda a$ while $\widetilde u_2$ shows the differential of $dc$ has a term $d$ in it. The two resulting $d$ terms in $\partial_\Lambda^2 a$ cancel.}
\label{fig:dsquaredex}
\end{figure}
One could then ask why we can restrict to disks with exactly one positive corner. The essential reason is that by Stokes' Theorem, there are no disks with boundary on $\Pi(\Lambda)$ with all convex corners where all of the corners are negative, and so any broken disk in the compactification of $\widehat{\Delta}(a;b_1,\ldots, b_n)$ must be a union of two disks, each of which has one positive corner. The general framework of Symplectic Field Theory \cite{EGH} suggests that we could expand our disk count to include disks with multiple positive corners, and indeed this can be done; see \cite{EkholmSFT,Ng-SFT}. From this viewpoint, we can filter by the number of positive corners, and LCH is a filtered quotient of a larger SFT invariant.
\end{remark}

\subsection{The Chekanov--Eliashberg DGA in the front projection}\label{frontdef}
While the Lagrangian projection is where the Chekanov--Eliashberg DGA is naturally defined (cf.\ the geometric definition in Section~\ref{ssec:symplectization} below), and where it is easiest to prove $\d^2=0$ and invariance, it is frequently helpful to have a version of $(\AlgL,\partial_\Lambda)$ in terms of the front projection of $\Lambda$.
With the aid of Lemma~\ref{morsification}, which converts front diagrams to Lagrangian diagrams, this is a simple task (see \cite{Ng-CLI} for more details). Specifically, given a generic front projection $F(\Lambda)$ of an oriented Legendrian knot $\Lambda$ and a base point $*$ away from right cusps and double points, the algebra $\AlgL$ is generated over $\Z$ by formal symbols $t$ and $t^{-1}$ and the set $\{a_1,\ldots, a_n\}$ of double points and right cusps in the diagram. The grading of the cusps are always $1$ and the gradings of a crossing $a$ is again computed using a path $\gamma$ in $F(\Lambda)$ from the overcrossing of $a$ to the undercrossing of $a$ that misses the marked point $*$. Given $\gamma$ we have 
$|a| = D(\gamma) - U(\gamma)$,
where $D(\gamma)$ and $U(\gamma)$ are the number of downward and upward cusps one encounters 
while traversing $\gamma$.
To compute $\partial_\Lambda$ we consider maps of the unit disk $D_n^2$ with $(n+1)$ boundary punctures $x, y_1,\ldots, y_n$, $u:D^2_m\to \R^2_{xz}$, that for generators $a, b_1, \ldots, b_n$ satisfy
\begin{enumerate}
\item $u$ is an immersion on the interior of $D^2$,
\item $u$ along the boundary of $\partial D^2_n$ is an immersion except at cusps where the image of $u$ is as shown in Figure~\ref{fig:fc},
\item $u$ sends each boundary puncture to a crossing or right cusp of $F(\Lambda)$,
\item $u$ sends $x$ to $a$, and a neighborhood of $x$ is mapped to a (leftward-facing) quadrant of $a$ labeled with a $+$ Reeb sign if $a$ is a crossing, or to the leftward-facing region bounded by the cusp if $a$ is a right cusp,
\item $u$ sends $y_i$ to $b_i$, and a neighborhood of $y_i$ is mapped to a quadrant of $b_i$ labeled with a $-$ Reeb sign if $b_i$ is a crossing, or to one of the diagrams in the top row of Figure~\ref{fig:fcontribution} if $b_i$ is a cusp.
\end{enumerate}
\begin{figure}[htb]
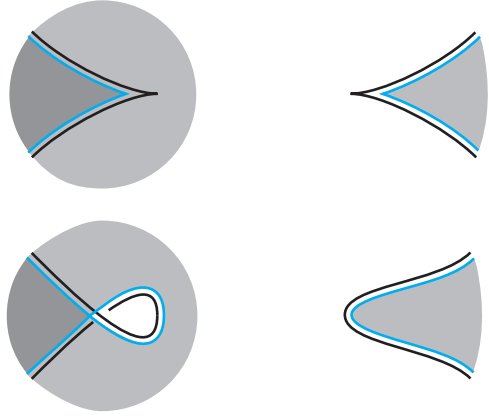
{\tiny
\begin{overpic}{figures/FrontCorners}
\end{overpic}}
\caption{Top row are cusps in the front projection and the local image of the immersion $u$ near the cusp point (darker shading indicates the map is locally two to one). The bottom row is the image of a corresponding immersion in the Lagrangian projection. The image of the boundary of the disk is slightly offset for the sake of visibility.  }
\label{fig:fc}
\end{figure}
The contribution of $u$ to $\partial_\Lambda a$ is
\[
w(u)= t(\eta_0)c(b_1) t(\eta_1)c(b_2)\cdots c(b_n) t(\eta_n)
\]
where the $\eta_i$ are the images of the arcs in $\partial D^2_n$ and $t(\eta_i)$ are the powers of $t$ as defined in the original definition of the differential in Section~\ref{theDGA}, and $c(b_i)=b_i$ unless $b_i$ is a right cusp and the image of $u$ near $b_i$ looks like the rightmost diagram in Figure~\ref{fig:fcontribution}, in which case $c(b_i) = b_i^2$.
\begin{figure}[htb]
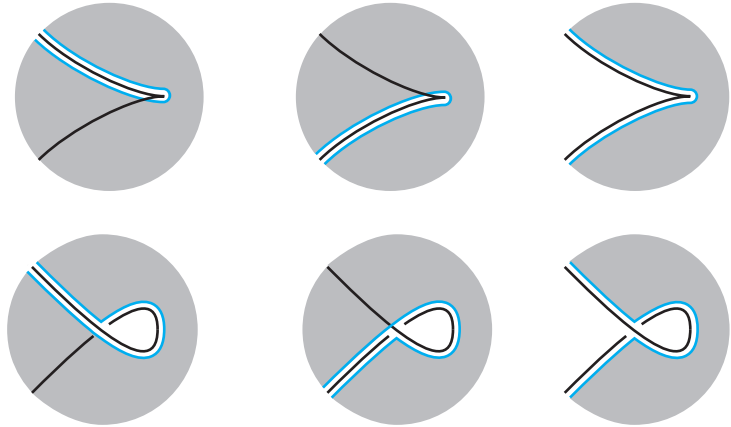
{\tiny
\begin{overpic}{figures/FrontContribution}
\end{overpic}}
\caption{The top row shows the local picture of the image of $u$ near a right cusp $b$. The bottom row shows the corresponding immersion in the Lagrangian projection. In the left and middle figures the contribution $c(b)$ is $b$ while in the right figure the contribution is $b^2$. The image of the boundary of the disk is slightly offset for the sake of visibility. }
\label{fig:fcontribution}
\end{figure}
Now the differential is 
\[
\partial_\Lambda a= \begin{cases} \sum \epsilon(u) w(u) & a \text{ is a crossing}\\
1+ \sum \epsilon(u) w(u) & a \text{ is a right cusp}
\end{cases}
\]
where the sum is taken over all disks $u$, up to reparameterization, described above, and $\epsilon(u)$ is $\pm 1$ depending on whether the number of $-$ corners in $u$ that cover a downward-facing (bottommost) quadrant is even or odd.
(This choice of signs differs slightly from the orientation signs for the resolution of the front as shown in Figure~\ref{fig:signs}, but is equivalent via an automorphism that negates some of the generators of the DGA, and is slightly more convenient for computations.)
See Section~\ref{quant} and Appendix~\ref{app:unknotDGA} for computed examples of the DGA in the front projection.

\subsection{Some observations about the Chekanov--Eliashberg DGA}\label{quant}
Here we qualitatively discuss what the Chekanov--Eliashberg DGA can and cannot detect about a Legendrian knot.

\bigskip
\noindent
{\bf Vanishing of the DGA.}
We begin with a simple observation: any $\pm$-stabilization of a Legendrian knot has vanishing contact homology.
\begin{proposition}[\cite{Che}]
If $\Lambda$ is a stabilized Legendrian knot 
\label{prop:trivial}
then the Legendrian contact homology of $\Lambda$ is trivial. 
\end{proposition}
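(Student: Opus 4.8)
The plan is to show that a stabilized Legendrian knot has a Reeb chord $e$ of low height with $\partial_\Lambda e = 1 + (\text{higher order terms})$, so that $1$ is a boundary and the unit is null-homologous; since $H_*(\AlgL,\partial_\Lambda)$ is a unital ring, this forces it to be the zero ring. Concretely, if $\Lambda = S_\pm(\Lambda')$, then in the Lagrangian projection $\Lambda$ is obtained from $\Lambda'$ by inserting a small loop with a single positive crossing; call the corresponding Reeb chord $e$. By choosing the loop to be very small (equivalently, by a Legendrian isotopy scaling it down), we can arrange that $h(e)$ is strictly smaller than $h(a)$ for every other Reeb chord $a$ of $\Lambda$.

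The main computation is to identify $\partial_\Lambda e$. I would argue that there are exactly two immersed disks with positive corner at $e$: the tiny disk filling the small loop, which contributes $1$, and possibly a disk running around the loop the other way. Using the height inequality from Remark~\ref{rmk:filtration}, any $u \in \Delta(e; b_1,\ldots,b_n)$ satisfies $h(e) > \sum_i h(b_i)$, and since $h(e)$ is smaller than the height of every chord other than $e$ itself, each $b_i$ must equal $e$ — but a disk with a positive corner at $e$ and all negative corners also at $e$ is impossible for area reasons unless it is the trivial small bigon. Hence the only contributions to $\partial_\Lambda e$ come from disks with no negative corners (constant-like bigons filling the loop region), giving $\partial_\Lambda e = 1$ or $\partial_\Lambda e = 1 + t^{\pm 1}$ depending on whether the base point lies on the loop (which we may assume it does not, so $\partial_\Lambda e = 1$). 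Here one must also check that the grading works out: the crossing $e$ created by a stabilization has $|e| = 1$ (one can see this directly in the front projection via the zigzag and the formula $|a| = D(\gamma) - U(\gamma)$, or in the Lagrangian projection by a local rotation-number count), consistent with $\partial_\Lambda$ lowering degree by $1$ and $|1| = 0$.

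Given $\partial_\Lambda e = 1$, we conclude $1 = \partial_\Lambda(e)$ is a boundary, so the class of $1$ in $H_*(\AlgL,\partial_\Lambda)$ vanishes; since homology of a DGA is a ring with unit the class of $1$, every class vanishes and $H_*(\AlgL,\partial_\Lambda) = 0$. Finally, by Theorem~\ref{maininvariance} and the fact that stable tame isomorphism induces an isomorphism on homology, this holds for any Legendrian knot Legendrian isotopic to a stabilization, completing the proof.

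The step I expect to be the main obstacle is the careful verification that no other disk contributes to $\partial_\Lambda e$: while the height estimate rules out disks with negative corners at chords other than $e$, one must still rule out disks with a negative corner at $e$ itself and disks with no punctures other than $x$ that wrap nontrivially around the rest of the diagram. The cleanest way is to combine the strict height inequality $h(e) > \sum h(b_i)$ with the observation that a disk with positive corner at $e$ lives in a neighborhood of the small loop (again by choosing the loop small enough that its enclosed area is less than $h(e)$ and less than the area of any region forced by a larger disk), so that the only possibilities are the one or two small bigons filling the loop.
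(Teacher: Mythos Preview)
Your proof is correct and follows essentially the same approach as the paper: the stabilization crossing $e$ has minimal height, the Stokes inequality forces $\partial_\Lambda e = 1$ (the small loop disk is the only contributor), and hence the homology vanishes because the unit is exact. Your worry in the final paragraph is not a real obstacle---the inequality $h(e) > \sum_i h(b_i)$ combined with $h(e) < h(b)$ for every other chord $b$ already forces $n=0$, and then the area of any contributing disk equals $h(e)$, which (for the loop chosen small enough) singles out the one small bigon; the grading discussion is likewise unnecessary for the argument.
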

\begin{proof}
When stabilizing a knot we add a small loop to the Lagrangian projection of the knot. The new double point $a$ can be chosen to have small height (see Remark~\ref{rmk:filtration}), so that $h(a)$ is smaller than $h(b)$ for any other double point. Then by the Stokes' Theorem argument from Remark~\ref{rmk:filtration}, the only contribution to $\d_\Lambda a$ comes from the disk bounded by the loop; that is, $\d_\Lambda a = 1$. 
Now if $h$ is any element in the kernel of $\partial_\Lambda$ then $\partial_\Lambda (ah)=h$, so every cycle is a boundary. 
\end{proof}

\begin{remark}
\label{rmk:trivial}
The DGA of a stabilized knot provides a negative answer to the question: if two Chekanov--Eliashberg DGAs have isomorphic homology, are they necessarily stable tame isomorphic? Indeed, define the Euler characteristic of a DGA to be the difference between the numbers of even-graded generators and odd-graded generators (for the DGA of a Legendrian knot, this is just the Thurston--Bennequin number). It is clear that Euler characteristic is invariant under stable tame isomorphism, while any two stabilized knots have quasi-isomorphic DGAs even if they have different $\tb$. 

There is one case where quasi-isomorphism implies stable tame isomorphism. If two Chekanov--Eliashberg DGAs have vanishing homology and the same Euler characteristic, then they are stable tame isomorphic. To see this, start with a DGA $(\A,\d)$ with vanishing homology, so that $\partial(x) = 1$ for some $x\in\A$. Label the Reeb chord generators of $\A$ as $a_1,\ldots,a_n$ in decreasing order of height, so that $\d(a_i)$ does not involve $a_1,\ldots,a_i$. Stabilize by adding two generators $a_0,b$ of degree $2,1$ respectively, with $\d(a_0)=b$, $\d(b)=0$, and let $(\A',\d)$ denote the result. Apply the elementary automorphism $\phi$ of $\A'$ that sends $b$ to $b-x$; the new differential $\d'=\phi\d\phi^{-1}$ on $\A'$ satisfies $\d'(a_0) = b-x$, $\d'(b)=1$. Now successively conjugate $\d'$ by the automorphism sending $a_i$ to $a_i+b\d'(a_i)$ for $i=0,\ldots,n$. The resulting differential $\d''$ is given by $\d''(b)=1$ and $\d''(a_i) = 0$ for all $i=0,\ldots,n$, and it is then easy to check that the stable tame isomorphism type of $(\A',\d'')$ is determined by its Euler characteristic.

\end{remark}

Proposition~\ref{prop:trivial} brings up the interesting question of whether or not vanishing of the LCH of a Legendrian knot implies that the knot is stabilized. This was an open question for some time, but was finally answered negatively by Sivek in \cite{Sivek}, using the Legendrian knot on the left hand side of Figure~\ref{fig:Sivek}. This knot is of topological type $m(10_{132})$ and is non-destabilizable because it has maximal $\tb$, as calculated in \cite{ng-arc}. On the other hand, the LCH of this knot vanishes. Indeed, if we label Reeb chords as in Figure~\ref{fig:Sivek} and choose a base point say near the bottom right cusp, the differential satisfies:
\begin{align*}
\d(a_1) &= 1+a_8+a_8a_4a_3 \\
\d(a_2) &= 1+a_5a_7 \\
\d(a_6) &= -a_7a_8 \\
\d(a_3) &= \d(a_4) = \d(a_5) = \d(a_7) = \d(a_8) = 0;
\end{align*}
it follows that $\d(a_2a_8+a_5a_6) = a_8$ and so $\d(a_1-(a_2a_8+a_5a_6)(1+a_4a_3)) = 1$.
It is also interesting to note that Sivek also produced another Legendrian knot in this knot type that has non-vanishing LCH; see Figure~\ref{fig:Sivek} again.
\begin{figure}[ht]
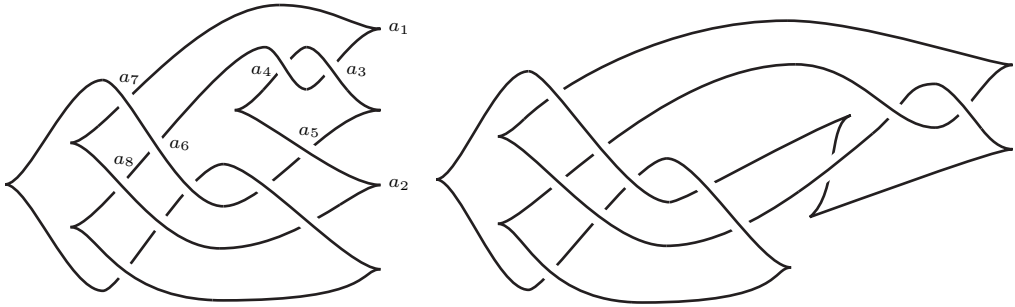
{\tiny
\begin{overpic}{figures/Sivek}
\put(125, 92){$a_1$}
\put(125, 40.25){$a_2$}
\put(110, 78){$a_3$}
\put(77, 78){$a_4$}
\put(93, 58){$a_5$}
\put(53, 53){$a_6$}
\put(35, 77){$a_7$}
\put(34, 49){$a_8$}
\end{overpic}}
\caption{Two Legendrian representatives of the knot $m(10_{132})$ with maximal Thurston--Bennequin invariant. The one of the left has trivial contact homology and the one on the right has non-trivial contact homology. }
\label{fig:Sivek}
\end{figure}

\bigskip
\noindent
{\bf The DGA of the unknot.}
It is also interesting to note, as first observed in \cite{CNS}, that the Chekanov--Eliashberg DGA does not characterize the standard Legendrian unknot. 

\begin{proposition}[cf.\ \cite{CNS}]
For $m \geq 1$, the Legendrian knot shown in Figure~\ref{fig:unknotDGA}, which is topologically the pretzel knot $P(3,-3,-3-m)$, has a DGA that is stable tame isomorphic to the DGA of the standard Legendrian unknot.
\label{prop:unknotDGA}
\end{proposition}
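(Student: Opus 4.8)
The plan is to reduce the proposition to the criterion recorded in Remark~\ref{rmk:trivial}: two Chekanov--Eliashberg DGAs that both have vanishing homology and the same Euler characteristic are stable tame isomorphic. The DGA of the standard Legendrian unknot (Example~\ref{ex:unknot}) has vanishing Legendrian contact homology and Euler characteristic $-1$, since it is generated by the single Reeb chord $a$ with $|a|=1$ (and $|t|=0$). Thus, writing $\Lambda_m$ for the Legendrian knot in Figure~\ref{fig:unknotDGA}, it suffices to prove that $(\mathcal{A}_{\Lambda_m},\d_{\Lambda_m})$ has vanishing homology and Euler characteristic $-1$ for every $m\geq 1$.

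The first step is to compute $(\mathcal{A}_{\Lambda_m},\d_{\Lambda_m})$ explicitly from the front projection via the recipe of Section~\ref{frontdef}; this computation is carried out in Appendix~\ref{app:unknotDGA}. Concretely, the generators are the crossings and right cusps of $F(\Lambda_m)$, graded by $|a|=D(\gamma)-U(\gamma)$ with cusps in degree $1$, and $\d_{\Lambda_m}$ is read off from the immersed disks in the front, where---as already happens for the Chekanov knot $\Lambda_2$ in Example~\ref{ex:chex}---one must be careful to include a non-embedded disk. The essential structural point to extract is that passing from $\Lambda_m$ to $\Lambda_{m+1}$ only inserts one more pair of generators (from the extra negative twists in the $(-3-m)$-tangle) whose differentials are controlled, so that the whole family can be handled with a single argument. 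In particular the parities of the generators are tracked uniformly in $m$, and their signed count---the Euler characteristic of the DGA, equal to $\tb(\Lambda_m)$ by Remark~\ref{rmk:trivial}---is computed directly from the front diagram to be $-1$ for all $m$, matching the standard unknot.

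The second step is to exhibit, for each $m$, an element $w_m\in\mathcal{A}_{\Lambda_m}$ with $\d_{\Lambda_m}w_m=1$; this proves $H_*(\mathcal{A}_{\Lambda_m},\d_{\Lambda_m})=0$ by the argument used in Proposition~\ref{prop:trivial} and for Sivek's knot above: if $h\in\ker\d_{\Lambda_m}$ then $\d_{\Lambda_m}(w_m h)=h$, so every cycle is a boundary. Producing $w_m$ is a finite manipulation of the explicit differential: one starts from a cusp generator $a$ with $\d_{\Lambda_m}a=1+(\text{higher-order terms})$ and uses the remaining generators to cancel those terms, exactly as in the displayed computation $\d(a_1-(a_2a_8+a_5a_6)(1+a_4a_3))=1$ for Sivek's example. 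Having established vanishing homology and Euler characteristic $-1$ (and noting that $\rot(\Lambda_m)=0$, so the $t$-gradings agree), Remark~\ref{rmk:trivial} immediately gives that $(\mathcal{A}_{\Lambda_m},\d_{\Lambda_m})$ is stable tame isomorphic to the DGA of the standard Legendrian unknot.

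The main obstacle is the bookkeeping in the explicit DGA computation: enumerating all disks in the front projection with the correct signs (and not overlooking the non-embedded one), choosing the base point so that the $t$-powers stay manageable, and---the genuinely delicate part---carrying this out uniformly over the infinite family $\{\Lambda_m\}_{m\geq 1}$, so that one formula for the contracting element $w_m$ works for all $m$ at once rather than case by case. Once the differential is pinned down, the vanishing-homology and Euler-characteristic verifications are routine.
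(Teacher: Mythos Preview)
Your approach has a fatal error at the outset: the DGA of the standard Legendrian unknot does \emph{not} have vanishing Legendrian contact homology. Indeed, the unknot admits an augmentation to $\Z_2$ (send $t\mapsto 1$, so that $\d a=1+1=0$), and the existence of an augmentation immediately obstructs vanishing homology: if there were $x$ with $\d x=1$ then $\epsilon(\d x)=1\neq 0$, contradicting $\epsilon\circ\d=0$. Equivalently, after setting $t=1$ over $\Z_2$ the DGA is $\Z_2\langle a\rangle$ with zero differential, whose homology is the full polynomial algebra. So the criterion of Remark~\ref{rmk:trivial}, which only compares DGAs with \emph{trivial} homology, simply does not apply here. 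Since $(\A_{\Lambda_m},\d_{\Lambda_m})$ is claimed to be stable tame isomorphic to the unknot DGA, it too has nonvanishing homology, and there is no hope of producing an element $w_m$ with $\d_{\Lambda_m}w_m=1$.

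The paper's proof is of an entirely different nature: Appendix~\ref{app:unknotDGA} writes out the DGA explicitly for $m=1$ and then constructs, by hand, a sequence of stabilizations and elementary automorphisms that reduces it to the single-generator unknot DGA. The work is not in detecting triviality but in finding concrete tame-automorphism moves (guided by a preliminary computation in the characteristic algebra) that successively eliminate generators in cancelling pairs until only one remains. Any correct proof must engage with the actual nontrivial homology---for instance by exhibiting such an explicit isomorphism, or by some other invariant-preserving reduction---rather than invoking the vanishing-homology shortcut.
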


\noindent
The proof of Proposition~\ref{prop:unknotDGA} was omitted in \cite{CNS} (see also Remark~\ref{rmk:unknotDGA} below); however, in Appendix~\ref{app:unknotDGA}, we provide an explicit stable tame isomorphism in the case $m=1$, which can be readily extended to general $m$.

\begin{figure}[ht]
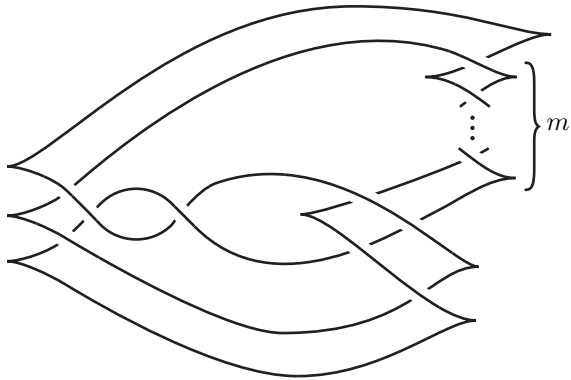
{\small
\begin{overpic}{figures/unknotDGA}
\put(204,96){$m$}
\end{overpic}}
\caption{Legendrian representative of the pretzel knot $P(3,-3, -3-m)$ whose DGA is stable tame isomorphic to the DGA of the standard unknot.}
\label{fig:unknotDGA}
\end{figure}

\begin{remark}
\label{rmk:unknotDGA}
The family of Legendrian knots in Figure~\ref{fig:unknotDGA} is actually slightly different from the family given in \cite{CNS}. For $m \geq 2$, both families satisfy the statement of Proposition~\ref{prop:unknotDGA}. For $m=1$, which corresponds to the topological knot $m(10_{140})$, the atlas \cite{CN} depicts two Legendrian representatives, which we denote here for concreteness by $\Lambda_1$ and $\Lambda_2$ in the order given in the atlas. The knot shown in Figure~\ref{fig:unknotDGA} (for $m=1$) is $\Lambda_1$, while the knot given in \cite[\S 4.3]{CNS} is $\Lambda_2$. Computations with Gr\"obner bases suggest that the DGA for $\Lambda_2$, unlike for $\Lambda_1$, may in fact not be the same as the DGA for the unknot. This does not affect the results of \cite{CNS} except that the $m(10_{140})$ diagram given there should be replaced by the one given in Figure~\ref{fig:unknotDGA}.
\end{remark}

It can be shown that given any Legendrian knot $\Lambda$, one can produce arbitrarily many distinct Legendrian knots whose DGA is stable tame isomorphic to the DGA for $\Lambda$, by taking the connected sum of $\Lambda$ with any number of disjoint copies of the $P(3,-3,-3-m)$ knots shown in Figure~\ref{fig:unknotDGA}. See \cite{E} for the definition of connected sum for Legendrian knots.

\bigskip
\noindent
{\bf Distinguishing arbitrarily many Legendrian knots.}
The previous two observations indicated the limits of the Chekanov--Eliashberg DGA, but we now observe that the DGA can distinguish arbitrarily many Legendrian knots of a single topological type with the same $\tb$ and $\rot$. Specifically, consider a Legendrian twist knot as shown in Figure~\ref{fig:chekanovfamilies}, where the box contains $m$ half-twists each represented by a $Z$ or $S$, for even $m \geq 2$. For fixed $m$, this gives a family of Legendrian knots of the same topological type and all with $(\tb,\rot)=(1,0)$. We will see in Section~\ref{subsec:linear} that linearized contact homology, which is derived from the DGA, recovers the unordered pair $\{k,l\}$, where $k$ and $l$ are the number of $Z$'s and $S$'s in the box, with $k+l=m$. It follows that there are at least $\frac{m}{2}+1$ distinct Legendrian knots representing a single topological twist knot, all with $(\tb,\rot)=(1,0)$. This was first proven in \cite{EFM}, building on work of Eliashberg. For $m=2$, the knots represented in the box by $ZS$ and $SS$ turn out to be the Chekanov $m(5_2)$ knots $\Lambda_1$ and $\Lambda_2$ respectively from Figure~\ref{fig:CHex}.

\begin{figure}[ht]
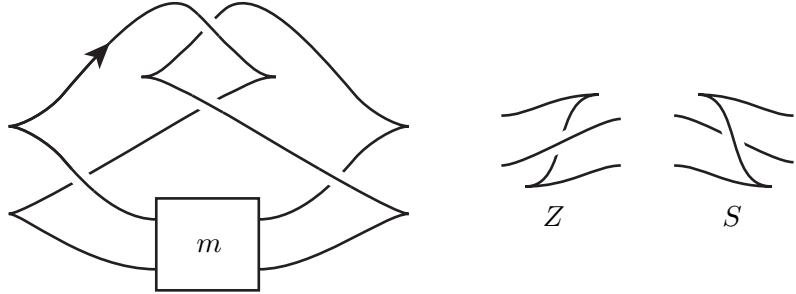
{
\begin{overpic}{figures/chekanovfamilies}
\put(71, 15){$m$}
\put(202, 25){$Z$}
\put(270, 25){$S$}
\end{overpic}}
\caption{A Legendrian twist knot. The box is replaced with a tangle formed by concatenating $m$ of the $Z$ and $S$ tangles shown on the right, in any order.}
\label{fig:chekanovfamilies}
\end{figure}

\begin{remark}
In fact for fixed $m$, there are exactly $\lceil \frac{m^2}{8} \rceil$ isotopy classes of Legendrian twist knots of the relevant topological type with $(\tb,\rot) = (1,0)$. This is proven in \cite{ENV} using a combination of linearized contact homology, knot Floer homology, and convex surface theory.
\end{remark}

\subsection{The Chekanov--Eliashberg DGA in the symplectization}
\label{ssec:symplectization}
In this section we discuss an alternate way to define the Chekanov--Eliashberg DGA using the symplectization of $(\R^3,\xi_{std})$. This definition is much more in the spirit of Symplectic Field Theory as set up by Eliashberg, Givental, and Hofer \cite{EGH}.
It also has the advantage of allowing one to consider Lagrangian cobordisms between Legendrian knots, as we will do in Section~\ref{sec:fillings}.

As usual we start with a Legendrian knot $\Lambda$ in $(\R^3,\xi_{std})$ with a marked point $*\in \Lambda$. The symplectization of $(\R^3,\xi_{std})$ is the 
symplectic manifold 
\[
(\R\times \R^3, d(e^t\alpha))
\]
where $\alpha=dz-y \, dx$ and $t$ is the variable on the first $\R$ factor. Inside the symplectization the manifold $L=\R\times \Lambda$ is a Lagrangian cylinder. 

As in Section~\ref{firstdef}, let $\{a_1,\ldots, a_n\}$ be the (generically finite) set of Reeb chords of $\Lambda$, and define $\AlgL = \Z\langle a_1,\ldots, a_n,t^{\pm 1}\rangle$.
The grading on $\AlgL$ is defined by choosing paths $\gamma_i$ as before, but now they are paths in $\Lambda$ that start at the positive end of a Reeb chord, end at the negative end of the Reeb chord, and do not pass through $*$. (Notice that these $\gamma_i$ project to the paths used in Section~\ref{firstdef}.) One can now define the gradings on the generators using the Conley--Zehnder index  associated to the $\gamma_i$ \cite{EES}, but in our current setup this is almost exactly the same as the definition given in Section~\ref{firstdef}, so we will just take the gradings from there. 

To define the boundary map for the DGA we need an almost complex structure $J$ on $(\R\times \R^3, d(e^t\alpha))$ that is compatible with $d(e^t\alpha)$. Here we can take $J:T(\R\times \R^3)\to T(\R\times \R^3)$ to be 
\begin{align*}
J(\partial_x)&= \partial_y+y\partial_t,\\
J(\partial_y)&= -\partial_x-y\partial_z,\\
J(\partial_z)&=-\partial_t,\\
J(\partial_t)&=\partial_z.
\end{align*}

As before we consider $D^2_n= D^2-\{x, y_1, \ldots, y_n\}$ where $D^2$ is the unit disk in $\C$ and $x, y_1, \ldots, y_n$ are points in its boundary appearing in counterclockwise order. 
We call a map $u:D^2_n\to \R\times \R^3$ {\em $J$-holomorphic} if
\[
J\circ du= du\circ j
\]
where $j$ is the standard complex structure on $\C$. 

We will also need our maps to have nice asymptotics near the punctures. To specify this we write $u_\R$ and $u_{\R^3}$ for $u$ composed with the projections of $\R\times \R^3$ to its first and second factors respectively. Let $p$ be one of the punctures on $\partial D^2_n$ and parameterize a neighborhood of $p$ by $(0,\infty)\times [0,1]$ with coordinate $(s,t)$. Let $a(t)$ be the parameterized Reeb chord $a$; then we say $u$ is is {\em asymptotic to $a$ at $\pm \infty$} if 
\begin{align*}
\lim_{s\to \infty} u_\R (s,t)&=\pm \infty\\
\lim_{s\to \infty} u_{\R^3}(s,t)&= a(t).
\end{align*}

Now if $a, b_1, \ldots, b_n$ are points in $\{a_1,\ldots, a_n\}$ then we define the set 
\[
\M(a;b_1,\ldots, b_n) = \{u: (D^2_n,\partial D^2_n)\to (\R\times \R^3, \R\times \Lambda): \text{ satisfying (1) -- (4)}\}/\sim,
\]
where $\sim$ is holomorphic reparameterization, and 
\begin{enumerate}
\item $u$ is $J$-holomorphic,
\item $u$  has finite energy:
\[
\int_{D^2_n} u^* d\alpha<\infty,
\]
\item near $x$, $u$ is asymptotic to $a$ at $\infty$,
\item near $y_i$, $u$ is asymptotic to $b_i$ at $-\infty$. 
\end{enumerate}
For a generic choice of $\Lambda$ one can show that $\M(a;b_1,\ldots, b_n)$ is a manifold of dimension 
$|a|-\sum_{i=1}^n |b_i|$.
We also notice that $\M(a;b_1,\ldots, b_n)$ has a symmetry: given $u\in \M(a;b_1,\ldots, b_n)$, adding any constant to $u_\R$ gives another element in $\M(a;b_1,\ldots, b_n)$, and thus we have an $\R$ action on $\M(a;b_1,\ldots, b_n)$. 

Given $u\in \M(a;b_1,\ldots, b_n)$, the image of $\partial D_l^2$ is a union of $n+1$ paths $\eta_0, \ldots, \eta_n$ in $\R\times \Lambda$ where $\eta_0$ is the path parameterized by the interval in $\partial D_n^2$ starting at $x$ and $\eta_i$ is the one starting at $y_i$. We define $t(\eta_i)$ to be $t^k$ where $k$ is the number of times $\eta_i$ crosses $\R\times *$ counted with sign. The word associated to $u$ is 
\[
w(u)= t(\eta_0)b_1 t(\eta_1)b_2\cdots b_n t(\eta_n).
\]
 There is also a sign $\epsilon(u)$ that can be associated to $u$ using {\em coherent orientations}, see \cite{ENS,EES-ori}. This sign is somewhat complicated to describe and will not be essential to us here so we refer to \cite{EES-ori} for details.  
We finally define the differential of $a\in \{a_1,\ldots, a_n\}$ to be 
\[
\partial_\Lambda a = \sum\epsilon(u) w(u),
\]
where the sum is taken over all $u \in \M(a;b_1,\ldots, b_n)/\R$ where $n\geq 0$ and  $b_1, \ldots, b_n\in \{a_1,\ldots, a_n\}$ such that $|a|-\sum_{i=1}^n b_i=1$. 
As before, we define $\partial_\Lambda t=\partial_\Lambda t^{-1}=0$ and extend $\partial_\Lambda$ to all of $\AlgL$ by the signed Leibniz rule.

If we let $\pi_{xy}:\R\times \R^3\to \R^2$ be the projection map, then one may easily check that any element $u\in \M(a;b_1,\ldots, b_n)$ will project to an element $\pi_{xy}\circ u$ in $\Delta(a;b_1,\ldots, b_n)$ \cite{ENS}. From this observation and the above discussion it should be clear that the DGA just defined is equivalent to the DGA from Section~\ref{firstdef}.
\begin{remark}
We note that the original definition of $(\AlgL,\partial_\Lambda)$ was purely combinatorial, while the above described definition requires some difficult analysis to rigorously define $\M(a;b_1,\ldots, b_n)$. Despite the increased difficulty in the new definition, this is what must be used to see how the DGAs of Lagrangian cobordant Legendrian knots are related. In addition, the analysis needed for the latter definition is precisely what is needed to generalize the Chekanov--Eliashberg DGA to higher dimensions. 
\end{remark}

\subsection{Extensions of the Chekanov--Eliashberg DGA}\label{extensions} 

According to the general picture of Symplectic Field Theory \cite{EGH}, Legendrian contact homology should be defined for Legendrian submanifolds in any contact manifold $Y$. In general the algebra will be generated not just by Reeb chords but also by closed Reeb orbits in $Y$, and this gives LCH a module-like structure over the closed contact homology of $Y$. (One can remove the need to consider closed contact homology if $Y$ has no closed Reeb orbits, as is the case in $\R^3$ or more generally $X\times\R$, or by using an exact symplectic filling of $Y$ to map the closed contact homology of $Y$ to the base field.) However, the analytical underpinnings necessary to show that LCH is indeed well-defined in general are a work in progress.
Here we briefly discuss a few settings besides $\R^3$ where the Chekanov--Eliashberg DGA and LCH has been rigorously defined, both in dimension $3$ and in higher dimensions.

In dimension three, the first example of such a generalization was in Sabloff's thesis, \cite{Sab}. Here LCH was defined for circle bundles over surfaces with contact structures that are transverse to the fibers of the bundle and invariant under the natural $S^1$ action. The definition in this case looks at the projection of the Legendrian to the base manifolds and proceeds in a similar fashion to our presentation in Section~\ref{firstdef}. The main difference is that each double point in the projection corresponds to infinitely many generators of the algebra (since there are infinitely many Reeb chords that project to this double point). The differential also counts immersed polygons, but again, there are some restrictions depending on what Reeb chords one is considering for a given double point. 
Generalizing Sabloff's work, Licata and Sabloff \cite{Lic,LicSab} defined LCH for Legendrian knots in the universally tight contact structures on lens spaces $L(p,q)$ and Seifert fibered spaces with suitable contact structures. The definition in these cases are similar to those given in \cite{Sab} except that care must be taken with the topology coming form the singular fibers. 

In another direction, Ekholm and the second author \cite{EN} gave a combinatorial definition of LCH in connected sums of $S^1\times S^2$, building on a construction of Traynor and the second author \cite{NT} for LCH in the $1$-jet space $J^1(S^1)$ (this latter space, which is topologically $S^1\times\R^2$, is a local model for a neighborhood of any Legendrian knot, and is also contactomorphic to the unit cotangent bundle of $\R^2$). The contact $3$-manifolds $\#^k(S^1\times S^2)$ considered in \cite{EN} naturally appear as the boundary of Weinstein 4-manifolds, and LCH in this setting is useful when applying surgery formulas from \cite{BEE} (see Section~\ref{LCHandWeinstein} below). The algebra developed in \cite{EN} also appears in the work of An and Bae \cite{AB} defining the DGA for Legendrian graphs in $\R^3$.

In higher dimensions, Ekholm, Sullivan, and the first author gave a rigorous definition of LCH for Legendrian submanifolds in the standard contact structure on $\R^{2n+1}$ in \cite{EES}, and showed that it could be used to distinguish many Legendrian submanifolds that were ``formally isotopic'' in \cite{EES2}. In \cite{EES-LCH} the same authors extended this definition to Legendrian submanifolds of $X\times \R$ where $X$ is an exact symplectic manifold with symplectic structure $d\lambda$ and the contact structure is $\ker(dz+\lambda)$ where $z$ is the coordinate on $\R$. Once again, in all these situations the LCH is defined by projecting to $X$ and generating an algebra by the double points of the projection. The differential is defined by counting holomorphic curves, instead of immersed polygons as above. 

\section{Augmentations and Linearized LCH}
\label{sec:linear}

To readers that are more familiar with Morse or Floer homologies than with Legendrian contact homology, LCH has a major drawback that turns out to have its own advantages. Unlike many Floer complexes, the Chekanov--Eliashberg DGA is not finite rank,
even in fixed degree: a single Reeb chord generator $a$ in degree $0$ yields infinitely many generators $a^n$, all of degree $0$, for the DGA as a $\Z$-module. This can readily persist in homology: the graded pieces of LCH are often infinite dimensional, and so the graded rank of LCH would have limited utility even if this were easy to compute (which it is not in general). 

A solution to this problem, due to Chekanov, is to use an augmentation of the DGA to produce a finite-dimensional linear complex, whose homology, linearized LCH, is invariant in a suitable sense. The multiplicative structure on the DGA, which descends to homology, then produces additional interesting algebraic structures on linearized LCH, in the form of $A_\infty$ operations. In this section we describe this story, as well as some interesting connections to another collection of Legendrian invariants known as rulings.

\subsection{Augmentations and linearizations}\label{subsec:linear}
An {\em augmentation} of the Chekanov--Eliashberg DGA $(\AlgL, \partial_\Lambda)$ to a unital ring $S$ is a DGA chain map 
\[
\epsilon:\thinspace (\AlgL, \partial_\Lambda) \to (S, 0),
\]
where $S$ lies entirely in degree $0$ and has the trivial differential. Notice that this implies that $\epsilon(1)=1$, $\epsilon\circ \partial_\Lambda=0$, and $\epsilon$ sends elements of nonzero degree to 0. In addition, since $\epsilon(t)$ must be sent to an invertible (and thus nonzero) element of $S$, this also implies that $\rot(\Lambda) = 0$.

\begin{remark}
More generally, for $\Lambda$ having arbitrary rotation number, and any integer $\rho$ dividing $2\rot(\Lambda)$, one can define a \textit{$\rho$-graded augmentation} to be a DGA map $(\A_\Lambda,\d_\Lambda) \to (S,0)$, where $S$ is in degree $0$ as before but now $\A_\Lambda$ is given the grading over $\Z/\rho\Z$ induced by its grading over $\Z$. That is, $\epsilon$ now only needs to send elements of degree not divisible by $\rho$ to $0$. The cases of most interest are when $\rho=1$ ($\epsilon$ is ``ungraded''), $\rho=2$ ($\epsilon$ is $2$-graded), and $\rho=0$ (this recovers the original notion of augmentation). Unless otherwise specified, all augmentations will be $0$-graded to simplify the exposition, although a version of much of the discussion below still holds for general $\rho$-graded augmentations.
\end{remark}

Not all $(\AlgL, \partial_\Lambda)$ admit augmentations, but admitting them is a property of the stable tame isomorphism class of the DGA. We will now see how to use augmentations to ``linearize'' $(\AlgL, \partial_\Lambda)$ and illuminate other structures. To this end, let $\Lambda$ be a Legendrian knot with Reeb chords $a_1,\ldots, a_n$, and let $\kk$ be a field (a commutative unital ring would also work). If $\epsilon:\thinspace (\AlgL, \partial_\Lambda) \to (\kk, 0)$ is an augmentation, then set 
\[
\AlgL^\epsilon= \frac{\AlgL\otimes \kk}{(t=\epsilon(t))}.
\]
As an algebra, $\AlgL^\epsilon$ is simply the tensor algebra over $\kk$ generated by $a_1,\ldots,a_n$. This now inherits a differential $\d$ from the differential $\d_\Lambda$ on $\AlgL$: replace each occurrence of $t$ in $\d_\Lambda a_i$ by $\epsilon(t) \in \kk^\times$ to get the new differential $\d a_i$.
Now let $A$ be the graded $\kk$-vector space spanned by Reeb chords $a_1,\ldots, a_n$, so that
\[
\AlgL^\epsilon= \bigoplus_{n\geq 0} A^{\otimes n}.
\]
The augmentation $\epsilon$ defines an automorphism
$
\phi^\epsilon: \AlgL^\epsilon \to \AlgL^\epsilon
$ 
sending each generator $a\in \{a_1,\ldots, a_n\}$ to $\phi^\epsilon(a)=a+\epsilon(a)$, and conjugating by $\phi^\epsilon$ we get a new differential
$
\partial^\epsilon= \phi^\epsilon\circ \partial\circ (\phi^\epsilon)^{-1}
$
on $\AlgL^\epsilon$. It is easy to check that the constant term of $\partial^\epsilon (a)$ for each Reeb chord $a$ is precisely $(\epsilon\circ \partial)(a)=0$: that is, $(\AlgL^\epsilon,\d^\epsilon)$ is \textit{augmented}. If we define
$(\AlgL^\epsilon)^k= \bigoplus_{n\geq k} A^{\otimes n} \subset \AlgL^\epsilon$, then $\partial^\epsilon$ maps $(\AlgL^\epsilon)^k$ to itself for all $k \geq 0$. In particular, $\d^\epsilon$ induces a map
\[
\partial^\epsilon_1 :\thinspace \left(\frac{(\AlgL^\epsilon)^1}{(\AlgL^\epsilon)^2}\right) \to \left(\frac{(\AlgL^\epsilon)^1}{(\AlgL^\epsilon)^2}\right).
\]
Since $(\AlgL^\epsilon)^1/(\AlgL^\epsilon)^2 \cong A$, we find that $\partial^\epsilon_1$ maps $A$ to itself and satisfies $(\partial^\epsilon_1)^2=0$.
Thus $(A,\partial^\epsilon_1)$ is a differential vector space over $\kk$; its graded homology is called the {\em linearized (Legendrian) contact homology of $\Lambda$ with respect to $\epsilon$} and is denoted $LCH^\epsilon_*(\Lambda)$. 

It turns out that the linearized homology itself is not an invariant of $\Lambda$, especially as it may depend on the particular augmentation (see \cite{MS}), but it is easy to fix this problem.
\begin{theorem}[\cite{Che}]
The collection
\[
\{LCH^{\epsilon}_*(\Lambda): \text{$\e$ is an augmentation } (\AlgL,\partial_\Lambda) \to (\kk,0) \}
\]
is an invariant of $\Lambda$ up to Legendrian isotopy. Put another way,
the set of Poincar\'e polynomials 
\[
P^\epsilon(z)=\sum_{i=-\infty}^\infty \dim_{\kk}\, (LCH^{\epsilon}_i(\Lambda)) z^i
\]
over all augmentations $\epsilon :\thinspace (\AlgL,\partial_\Lambda) \to (\kk,0)$ is an invariant of $\Lambda$.
\end{theorem}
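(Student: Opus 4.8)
The plan is to deduce invariance of the set of Poincar\'e polynomials from the stable tame isomorphism invariance of the DGA (Theorem~\ref{maininvariance}). Since every stable tame isomorphism is a composition of stabilizations and a tame isomorphism, and since Theorem~\ref{maininvariance} also absorbs the dependence on the choice of base point, it suffices to show that the unordered set $\{P^\epsilon(z)\}$, taken over all augmentations $\epsilon\colon(\AlgL,\partial_\Lambda)\to(\kk,0)$, is unchanged when $(\AlgL,\partial_\Lambda)$ is replaced by a tame isomorphic DGA or by a stabilization of itself. Chaining these two statements along a stable tame isomorphism between the DGAs of two Legendrian isotopic knots then gives the theorem.

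For the tame isomorphism step, let $\Phi\colon(\AlgL,\partial_\Lambda)\to(\mathcal{A}_{\Lambda'},\partial_{\Lambda'})$ be a tame isomorphism. Since $\Phi(t)=t$, for every augmentation $\epsilon'$ of the target the composite $\epsilon:=\epsilon'\circ\Phi$ is an augmentation of $(\AlgL,\partial_\Lambda)$, and $\epsilon'\mapsto\epsilon$ is a bijection on augmentations. After setting $t$ to its common value $\epsilon(t)=\epsilon'(t)$, the map $\Phi$ descends to an algebra isomorphism $\AlgL^\epsilon\to\mathcal{A}_{\Lambda'}^{\epsilon'}$; conjugating it by the augmentation-twisting automorphisms $\phi^\epsilon$ and $\phi^{\epsilon'}$ of Section~\ref{subsec:linear} yields a DGA isomorphism $\Psi\colon(\AlgL^\epsilon,\partial^\epsilon)\to(\mathcal{A}_{\Lambda'}^{\epsilon'},(\partial')^{\epsilon'})$ which one checks is \emph{augmented}, i.e.\ $\Psi(a)$ has vanishing constant term for each generator $a$; consequently $\Psi$ respects the word-length filtration by the subalgebras $(\AlgL^\epsilon)^k$. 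The heart of the argument is then to extract the word-length-$1$ part of the identity $\Psi\circ\partial^\epsilon=(\partial')^{\epsilon'}\circ\Psi$: because $\Psi$ is filtered with invertible linear part $\Psi_1\colon A\xrightarrow{\sim}A'$, one gets $\Psi_1\circ\partial^\epsilon_1=(\partial')^{\epsilon'}_1\circ\Psi_1$, so $\Psi_1$ is an isomorphism of the linearized complexes and $LCH^\epsilon_*(\Lambda)\cong LCH^{\epsilon'}_*(\Lambda')$, hence $P^\epsilon=P^{\epsilon'}$. Running over all $\epsilon'$ gives equality of the two sets of Poincar\'e polynomials.

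For the stabilization step, write the grading-$k$ stabilization as $\mathcal{A}'=\AlgL\langle e_k,e_{k-1}\rangle$ with $\partial e_k=e_{k-1}$, $\partial e_{k-1}=0$. Every augmentation $\epsilon'$ of $\mathcal{A}'$ restricts to an augmentation $\epsilon=\epsilon'|_{\AlgL}$, and this restriction is surjective since any augmentation of $\AlgL$ extends by $\epsilon'(e_k)=\epsilon'(e_{k-1})=0$ (it need not be injective when $k\in\{0,1\}$, which is harmless). For any such pair, $(\partial')^{\epsilon'}$ agrees with $\partial^\epsilon$ on the original generators while $(\partial')^{\epsilon'}_1$ sends $e_k\mapsto e_{k-1}\mapsto 0$, so the linearized complex of $\mathcal{A}'$ splits as $(A,\partial^\epsilon_1)\oplus(\kk e_k\xrightarrow{\sim}\kk e_{k-1})$; the second summand is acyclic, whence $LCH^{\epsilon'}_*(\mathcal{A}')\cong LCH^\epsilon_*(\AlgL)$ and $P^{\epsilon'}=P^\epsilon$. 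Together with surjectivity of restriction, this shows the set of Poincar\'e polynomials is unchanged by stabilization, completing the reduction.

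I expect the main obstacle to lie in the tame isomorphism step: verifying that conjugation by the augmentation twists really does produce a filtration-preserving (augmented) DGA isomorphism between the augmented DGAs, and that passing to its linear part intertwines the linearized differentials. This amounts to the statement that ``an augmented DGA isomorphism induces an isomorphism on linearized homology,'' which is conceptually routine but is the place where one must be careful about the role of $t$ and the interaction of the algebra isomorphism with the word-length filtration. An alternative route, closer to Chekanov's original argument, is to prove instead that homotopic augmentations have isomorphic linearized homology and that a tame isomorphism induces a bijection on augmentations up to homotopy; the approach above trades the homotopy relation for the explicit conjugation computation.
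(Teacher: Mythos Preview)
The paper does not actually prove this theorem: it simply states the result, attributes it to \cite{Che}, and moves directly to examples. So there is no paper proof to compare against. Your argument is the standard one (essentially Chekanov's), and it is correct: reduce to stable tame isomorphism invariance via Theorem~\ref{maininvariance}, then check separately that the set of Poincar\'e polynomials is preserved by tame isomorphisms and by stabilizations. Your verification that the conjugated map $\Psi=\phi^{\epsilon'}\circ\Phi\circ(\phi^\epsilon)^{-1}$ is augmented (hence filtration-preserving, with invertible linear part) is the key point and it goes through as you describe, since the constant term of $\phi^{\epsilon'}(x)$ is $\epsilon'(x)$ and $\epsilon'(\Phi(a))=\epsilon(a)$.

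One tiny inaccuracy in the stabilization step: you say restriction of augmentations need not be injective when $k\in\{0,1\}$, but in fact for $k=1$ it is injective, since $|e_k|=1$ forces $\epsilon'(e_k)=0$ and the chain-map condition $\epsilon'(\partial e_k)=0$ forces $\epsilon'(e_{k-1})=0$. Non-injectivity occurs only for $k=0$, where $\epsilon'(e_0)\in\kk$ is unconstrained. As you note, this is harmless for the conclusion, since in all cases the twisted linearized complex splits off an acyclic piece $\kk e_k\xrightarrow{\sim}\kk e_{k-1}$.
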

\begin{example}
One may easily compute all augmentations to $\Z_2$ for the DGAs of the Chekanov examples computed in Example~\ref{ex:chex}. The Poincar\'e polynomials for $\Lambda_1$ are all of the form
\[
z^{-2} +z + z^2,
\]
while for $\Lambda_2$ there is a unique augmentation, which has Poincar\'e polynomial
\[
2+z.
\]
Thus we see the linearized contact homology distinguishes those two examples. 
\end{example}

\begin{example}
More generally, here we present the Poincar\'e polynomials for the Legendrian twist knots in Figure~\ref{fig:chekanovfamilies}, cf.\ \cite{EFM}. Let $\Lambda$ be a knot as shown in Figure~\ref{fig:chekanovfamilies}, and let $k$ and $l$ denote the number of $Z$'s and $S$'s in the box in that diagram, where $k+l=m$. Then for any augmentation of $\Lambda$, the Poincar\'e polynomial is
$z+z^{k-l}+z^{l-k}$. It follows that linearized contact homology detects $|k-l|$ and thus (for fixed $m$) the unordered pair $\{k,l\}$. 
\end{example}

\begin{remark}
If we subtract $z$ from each of the above Poincar\'e polynomials, we obtain polynomials that are symmetric under interchanging $z \leftrightarrow z^{-1}$. This phenomenon is true in general and is known as Sabloff duality \cite{Sabloff-duality}. In its simplest form, Sabloff duality says that $\dim \LCH^\epsilon_k = \dim \LCH^\epsilon_{-k}$ except when $k = \pm 1$, and $\dim \LCH^\epsilon_1 = \dim \LCH^\epsilon_{-1}+1$. This can be upgraded to an exact triangle relating $LCH^\epsilon_*$, its dual $LCH_\epsilon^*$ (see below), and the homology of $\Lambda$; see \cite{EESduality}. Sabloff duality has been reinterpreted in \cite{NRSSZ} as a Poincar\'e-type duality between positive and negative augmentation categories (see Section~\ref{sec:augcat} below), and indeed in the case where $\epsilon$ comes from a filling $L$ (see Section~\ref{sec:fillings} below) it is precisely Poincar\'e duality for $L$.
\end{remark}

It will be useful to dualize this discussion and talk about linearized cohomology. To this end we can set $A^\vee=\Hom(A, \kk)$ and let $\delta^\epsilon_1$ be the dual of the map $\partial^\epsilon_1: A\to A$. If $A$ is generated by $a_1,\ldots,a_n$, then we denote the dual basis fo $A^\vee$ by $a_1^\vee,\ldots,a_n^\vee$ and grade them by $|a^\vee_i|=|a_i|+1$. (This grading shift is for compatibility with $A_\infty$ conventions, cf.\ Definition~\ref{def:Ainfty} below.) As usual we have $\delta^\epsilon_1\circ \delta^\epsilon_1=0$, and so we can consider the cohomology of $(A^\vee,\delta_1^\epsilon)$. This is called the {\em linearized contact cohomology with respect to $\epsilon$} and denoted $LCH^*_\epsilon(\Lambda)$.
The Universal Coefficient Theorem implies that the linearized cohomology over a field contains the same information as the linearized homology, 
but we will see that the linearized cohomology can be naturally endowed with significantly more structure. 

\begin{example}
In Example~\ref{ex:trefoil} we computed the DGA for the Legendrian trefoil in Figure~\ref{fig:trefoil}. 
\label{ex:trefoil-lin}
One can compute that there are five augmentations of the DGA to $\Z_2$. Let $\epsilon$ be the augmentation that sends $a_3$ to 1 and every other generator to 0. Then the induced differential $\partial^\epsilon$ is given by 
\begin{align*}
\partial^\epsilon a_1 &= a_3+a_5+ a_5a_4 + a_5a_4a_3,\\
\partial^\epsilon a_2 &= a_3+a_5+a_4a_5+ a_3a_4a_5,\\
\partial^\epsilon a_3 &= \partial^\epsilon a_4=\partial^\epsilon a_5=0.
\end{align*}
The only nontrivial linear terms are $\partial^\epsilon_1 a_1=a_3+ a_5$ and $\partial^\epsilon_1 a_2=a_3+a_5$. Thus the dual map is 
\begin{align*}
\delta_1^\epsilon a_3^\vee &= a_1^\vee + a_2^\vee\\
\delta_1^\epsilon a_5^\vee &= a_1^\vee + a_2^\vee\\
\delta_1^\epsilon a_1^\vee &= \delta_1^\epsilon a_2^\vee=\delta_1^\epsilon a_4^\vee=0
\end{align*}
and we have $LCH^2_\epsilon(\Lambda) \cong \Z_2$, $LCH^1_\epsilon(\Lambda) \cong (\Z_2)^2$. (This is in fact true for all five augmentations to $\Z_2$.)
\end{example}

\subsection{Augmentations and $A_\infty$ algebras}\label{ssec:a-infty}

Recall that $A$ is the $\kk$-vector space generated by Reeb chords $a_1,\ldots,a_n$. Since $\d^\epsilon$ has no constant terms, $\d^\epsilon$ maps $A$ to $\oplus_{n \geq 1} A^{\otimes n}$, and we can write
\[
\d^\epsilon = \d_1^\epsilon + \d_2^\epsilon + \cdots
\]
where $\d_n^\epsilon : A \to A^{\otimes n}$ is the map consisting of degreee $n$ terms in $\d^\epsilon$. The differential for linearized contact cohomology is the dual of $\d_1^\epsilon$; dualizing $\d_n^\epsilon$ for $n \geq 1$ gives $A$ the structure of an $A_\infty$ algebra.

\begin{definition}
An \textit{$A_\infty$ algebra} is a graded $\kk$-vector space $V$ together with a sequence of operations $m_n :\thinspace V^{\otimes n} \to V$, $n \geq 1$, of degree $1-n$, satisfying the $A_\infty$ relations:
\label{def:Ainfty}
\begin{align*}
m_1(m_1(v_1)) &= 0 \\
m_1(m_2(v_1,v_2)) &= m_2(m_1(v_1),v_2)+(-1)^{|v_1|}m_2(v_1,m_1(v_2)) \\
m_1(m_3(v_1,v_2,v_3)) &= m_2(m_2(v_1,v_2),v_3) -m_2(v_1,m_2(v_2,v_3)) \\
&\quad -m_3(m_1(v_1),v_2,v_3)-(-1)^{|v_1|}m_3(v_1,m_1(v_2),v_3))\\
&\quad -(-1)^{|v_1|+|v_2|}m_3(v_1,v_2,m_1(v_3))
\end{align*}
and generally
\[
\sum_{r+s+t=n} \pm m_{r+1+t}(1^{\otimes r} \otimes m_s \otimes 1^{\otimes t}) = 0
\]
for $n \geq 1$. (See e.g.\ \cite{NRSSZ} for an explicit choice of signs that is adapted for the setting of LCH.)
\end{definition}

\begin{proposition}
$(A^\vee,m_n = (\d^\e_n)^\vee)$ forms an $A_\infty$ algebra. Here $(\d^\e_n)^\vee :\thinspace (V^\vee)^{\otimes n} \to V^\vee$ is the dual of $\d^\e_n$.
\end{proposition}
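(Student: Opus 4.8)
The plan is to derive the $A_\infty$ relations for $(A^\vee, m_n = (\d_n^\epsilon)^\vee)$ directly from the single identity $(\d^\epsilon)^2 = 0$ on the augmented algebra $\AlgL^\epsilon$. The key observation is that the differential $\d^\epsilon$ on $\AlgL^\epsilon = \bigoplus_{n \geq 0} A^{\otimes n}$ is a \emph{coderivation} with respect to the natural tensor coalgebra structure, and the statement "$\d^\epsilon$ is a differential" is equivalent to "the dual maps $(\d_n^\epsilon)^\vee$ satisfy the $A_\infty$ relations." First I would recall that $\d^\epsilon$ is determined by its components $\d_n^\epsilon : A \to A^{\otimes n}$ via the Leibniz rule; concretely, on $A^{\otimes N}$ the differential acts as $\sum_{i} 1^{\otimes i} \otimes \d^\epsilon|_A \otimes 1^{\otimes (N-1-i)}$ followed by reassociating, i.e.\ $\d^\epsilon$ is exactly the coderivation extension of the collection $\{\d_n^\epsilon\}$.

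The main computation is then to expand $(\d^\epsilon)^2 = 0$ applied to a single generator $a \in A$, keeping track of the word-length grading. Applying $\d^\epsilon$ once sends $a$ to $\sum_s \d_s^\epsilon(a) \in \bigoplus_{s \geq 1} A^{\otimes s}$. Applying $\d^\epsilon$ again, to a term in $A^{\otimes s}$, means applying $\d^\epsilon|_A$ to each of the $s$ tensor factors (with Koszul signs) and summing. Collecting all contributions landing in $A^{\otimes n}$ gives exactly
\[
\sum_{r+s+t=n} \pm\, (1^{\otimes r} \otimes \d_s^\epsilon \otimes 1^{\otimes t}) \circ \d_{r+1+t}^\epsilon = 0,
\]
where $r$ counts factors to the left that $\d^\epsilon|_A$ acted trivially on, $t$ counts those to the right, and $\d_s^\epsilon$ records the word-length of the output at the one factor where the second differential was nontrivial (reading off the first differential as $\d_{r+1+t}^\epsilon$). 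Dualizing this identity — noting that dualizing reverses composition and turns $1^{\otimes r} \otimes \d_s^\epsilon \otimes 1^{\otimes t}$ into $1^{\otimes r} \otimes (\d_s^\epsilon)^\vee \otimes 1^{\otimes t} = 1^{\otimes r} \otimes m_s \otimes 1^{\otimes t}$ — yields precisely the $A_\infty$ relation $\sum_{r+s+t=n} \pm\, m_{r+1+t}(1^{\otimes r} \otimes m_s \otimes 1^{\otimes t}) = 0$. The degree bookkeeping also works out: $\d_n^\epsilon$ has degree $-1$ as a map $A \to A^{\otimes n}$ when $A$ carries its LCH grading, and after the shift $|a^\vee| = |a| + 1$ defining the grading on $A^\vee$, the dual $m_n = (\d_n^\epsilon)^\vee$ acquires degree $1 - n$, as required.

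The step I expect to be the main obstacle is the sign verification: matching the Koszul signs that arise from commuting $\d^\epsilon|_A$ past tensor factors (and from dualizing, which introduces further sign reshuffling) with the specific sign convention in the $A_\infty$ relations of Definition~\ref{def:Ainfty}. This is genuinely delicate but is a standard bookkeeping exercise — one fixes a convention once and checks it on the low-order relations ($n = 1, 2, 3$) where the identities in Definition~\ref{def:Ainfty} are written out explicitly, then appeals to the uniform formula. I would cite \cite{NRSSZ} for the precise sign choice adapted to LCH, as the excerpt already does, rather than re-deriving it; the conceptual content — that $(\d^\epsilon)^2 = 0$ \emph{is} the bar-construction form of the $A_\infty$ relations, and dualizing a coderivation-square-zero statement gives an $A_\infty$ algebra — is what carries the proof. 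Everything else (finiteness of each $\d_n^\epsilon$, well-definedness of the duals since $A$ is finite-dimensional over $\kk$) is immediate from the setup in Section~\ref{subsec:linear}.
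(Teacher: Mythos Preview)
Your proposal is correct and takes essentially the same approach as the paper: decompose $(\d^\epsilon)^2=0$ by word-length into the relations $\sum_{r+s+t=n}\pm(1^{\otimes r}\otimes\d_s^\epsilon\otimes 1^{\otimes t})\circ\d_{r+1+t}^\epsilon=0$ and dualize, deferring the sign verification to \cite{NRSSZ}. Your write-up is in fact more detailed than the paper's sketch (you add the coderivation/bar-construction framing and the degree check), but the underlying argument is identical.
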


\noindent
To see this, dualize the components of the equation $(\d^\e)^2=0$, where $(\d^\e)^2$ is viewed as a map from $V$ to $\oplus_{n=1}^\infty V^{\otimes n}$. As we have already discussed, the component of $(\d^\e)^2$ from $V$ to $V$ is $(\d^\e_1)^2$, and dualizing this gives $m_1^2 = 0$. The next component of $(\d^\e)^2$, from $V$ to $V^2$, is (up to sign)
\[
\d^\e_2 \circ \d^\e_1+(\d^\e_1 \otimes 1 + 1 \otimes \d^\e_1) \circ \d^\e_2,
\]
and dualizing this gives the second $A_\infty$ relation; and so on. Note regarding signs that there are Koszul signs implicit in the definition of $(\d^\e_n)^\vee$; see e.g.\ \cite{NRSSZ} for the explicit signs.

\begin{remark}
Here we give a more concrete description of the $A_\infty$ operations $m_n$, disregarding signs forsimplicity.
\label{rmk:a-infty}
Let $a_{i_1},\ldots,a_{i_n}$ be Reeb chord generators of $\AlgL$, and suppose that $a$ is another Reeb chord such that $\d_\Lambda a$ contains a monomial term in which $a_{i_1},\ldots,a_{i_n}$ appear in order, possibly interspersed with other $a$ generators or powers of $t$. In this monomial, replace every appearance of $t^{\pm 1}$ by $\e(t)^{\pm 1} \in \kk$, resulting in a coefficient $\alpha\in\kk$ times a product of Reeb chords:
\[
\alpha \,\mathbf{a}_0 a_{i_1} \mathbf{a}_1 a_{i_2} \mathbf{a}_2 \cdots \mathbf{a}_{n-1} a_{i_n} \mathbf{a}_n,
\]
where each $\mathbf{a}_j$ represents a (possibly empty) word of Reeb chords. Then in the twisted differential $\d^\e(a)$, there is a term where each of $\mathbf{a}_1,\ldots,\mathbf{a}_n$ is replaced by its value under $\e$, resulting in a contribution to $\d^\e_n(a)$ of $\alpha \e(\mathbf{a}_0) \cdots \e(\mathbf{a}_n) a_{i_1} a_{i_2} \cdots a_{i_n}$. Dualizing gives
\[
m_n(a_{i_n}^\vee,\ldots,a_{i_1}^\vee) = \alpha \e(\mathbf{a}_0) \cdots \e(\mathbf{a}_n) a^\vee + \cdots.
\]
Here the convention on the order of inputs is the reverse of the order in $\d^\e_n(a)$; this allows for compatibility with standard $A_\infty$-category conventions (see Section~\ref{ssec:augcat} below).

\begin{figure}[htb]
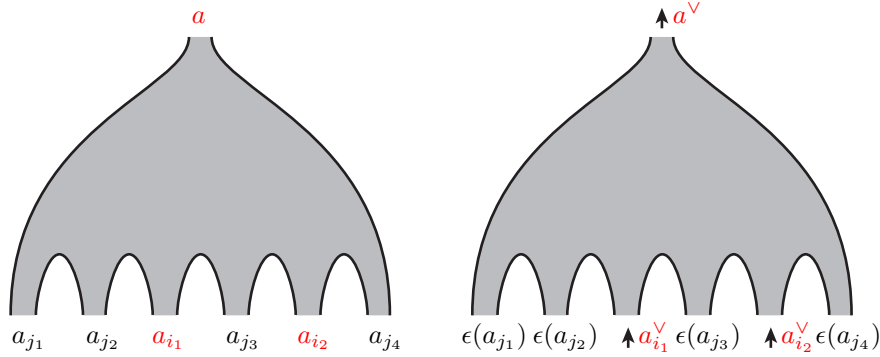
{\small
\begin{overpic}{figures/ainfinity1}
\put(69, 123){{\color{red} $a$}}
\put(54, 2){{\color{red} $a_{i_1}$}}
\put(109, 2){{\color{red} $a_{i_2}$}}
\put(251, 123){{\color{red} $a^\vee$}}
\put(238, 2){{\color{red} $a_{i_1}^\vee$}}
\put(292, 2){{\color{red} $a_{i_2}^\vee$}}
\put(1, 2){{$a_{j_1}$}}
\put(29, 2){{$a_{j_2}$}}
\put(82, 2){{$a_{j_3}$}}
\put(136, 2){{$a_{j_4}$}}
\put(171, 2){{$\epsilon(a_{j_1})$}}
\put(198, 2){{$\epsilon(a_{j_2})$}}
\put(252, 2){{$\epsilon(a_{j_3})$}}
\put(305, 2){{$\epsilon(a_{j_4})$}}
\end{overpic}}
\caption{
A disk with positive end at $a$ and negative ends including $a_{i_1}$ and $a_{i_2}$ contributes an $a^\vee$ term to $m_2(a_{i_2}^\vee,a_{i_1}^\vee)$.}
\label{fig:a-infty}
\end{figure}

For an illustration, see Figure~\ref{fig:a-infty}. Here the term $\d_\Lambda a = a_{j_1}a_{j_2}a_{i_1}a_{j_3}a_{i_2}a_{j_4} + \cdots$ dualizes to $m_2(a_{i_2}^\vee,a_{i_1}^\vee) = \e(a_{j_1})\e(a_{j_2})\e(a_{j_3})\e(a_{j_4})a_k^\vee + \cdots$. (In fact, this single disk could make $15$ contributions to $m_2$, corresponding to the $6\choose 2$ ways to choose two inputs from  $a_{j_1},a_{j_2},a_{i_1},a_{j_3},a_{i_2},a_{j_4}$.)
\end{remark}

Given an $A_\infty$ algebra $(V,m_n)$, we can define the graded homology $H(V,m_1)=\ker m_1/\im m_1$, since $m_1^2=0$ by the first $A_\infty$ relation. By the second $A_\infty$ relation, we can view $m_2$ as a multiplication operation on $V$ for which the differential $m_1$ satisfies the Leibniz rule. It follows that $m_2$ descends to a well-defined product on $H(V,m_1)$. Furthermore, although $m_2$ is not necessarily associative as a product on $V$, the third $A_\infty$ relation implies that it is associative on $H(V,m_1)$. 

We conclude that $H(V,m_1)$ is a ring with multiplication given by $m_2$. In the case of interest to us, $\LCH^*_\e$ is a ring where the product structure comes from the second order terms in the differential $\d^\e$. This picture is entirely analogous to how the cup product induces multiplication on the singular cohomology for topological spaces.

\begin{figure}[htb]
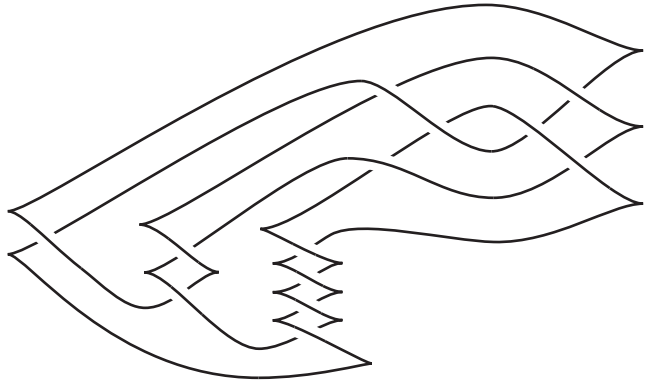
{\small
\begin{overpic}{figures/Legmirror}
\end{overpic}}
\caption{
A knot that can be distinguished from its Legendrian mirror by the product on linearized cohomology.
}
\label{fig:Legmirror}
\end{figure}

Some Legendrian knots cannot be distinguished by their linearized cohomologies $\LCH^*_\e$ but can be distinguished by the product on $\LCH^*_\e$. An example of such a pair of knots is the Legendrian knot $\Lambda$ shown in Figure~\ref{fig:Legmirror}, along with its ``Legendrian mirror'' obtained by reflecting the front projection for $\Lambda$ in the $x$ axis. These two knots have isomorphic $\LCH^*_\e$ but their products are opposite: $m_2(v_1,v_2)$ in one is $m_2(v_2,v_1)$ in the other. See \cite{CKESW} for this computation, along with more general families of Legendrian knots that require the use of higher order products $m_n$ to tell them apart.

\subsection{Rulings and augmentations}\label{rulingaugs}
In this section we explore a geometric invariant of front diagrams that is closely connected to augmentations. Given the front projection of a Legendrian knot $\Lambda$ we call the {\em arcs} of $F(\Lambda)$ the closures of the components of $F(\Lambda)$ minus the cusps and crossings. A {\em $\rho$-graded ruling} of $F(\Lambda)$ is a partition $R=\{R_1,\ldots, R_k\}$ of the arcs of $F(\Lambda)$ so that 
\begin{enumerate}
\item each $R_i$ bounds a disk,
\item  each $R_i$ contains one left and one right cusp, 
\item the crossings where $R_i$ is not smooth are called switches and the grading of a switch must be divisible by $\rho$, and
\item The disks associated to two $R_i$'s at a switch must locally have interiors nested or disjoint. See Figure~\ref{fig:switches}.
\end{enumerate}
\begin{figure}[htb]{\small
\begin{overpic}{figures/switches}
\end{overpic}}
\caption{Top row shows allowable switches. The bottom row shows a disallowed switch. In each case the solid arcs are paired by the ruling, as are the dotted arcs.}
\label{fig:switches}
\end{figure}

To a $\rho$-graded ruling $R$ of $F(\Lambda)$ we associate the number 
\[
\theta(R)=k-s,
\]
where $k$ is the number of components of $R$ (which is equal to half the number of cusps of $F(\Lambda)$) and $s$ is the number of switches in $R$. We can now define the {\em complete $\rho$-graded ruling invariant} to be the multiset
\[
\Theta_\rho(\Lambda)=\{\theta(R)\,:\, R \text{ a $\rho$-graded ruling of $F(\Lambda)$}\}.
\]
One may check that the following is true. 
\begin{theorem}[Chekanov and Pushkar \cite{ChP}]
For any $\rho$ that divides $2 \rot(\Lambda)$, the complete $\rho$-graded ruling invariant $\Theta_\rho(\Lambda)$ is an invariant of the Legendrian isotopy class of $\Lambda$. 
\end{theorem}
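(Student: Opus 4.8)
The plan is to verify invariance directly against the Legendrian front Reidemeister moves. By \'Swi\k{a}tkowski's theorem \cite{Swiatkowski92}, two generic front projections represent Legendrian isotopic knots exactly when they are connected by a finite sequence of ambient isotopies of $\R^2_{xz}$ keeping the front generic, together with a fixed finite list of local moves, each supported in a small disk $D$. Recall that a $\rho$-graded ruling $R$ of $F(\Lambda)$ is determined by its set of switches, subject to the local nesting/disjointness condition (item (4) of the definition) and the condition that each switch have grading divisible by $\rho$ (this is meaningful since $\rho\mid 2\rot(\Lambda)$, and the grading of a crossing is an integer computed by counting cusps along a path, hence unchanged under the moves in a way compatible with item (3)). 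Since $\theta(R)=k-s$ depends only on the number $k$ of right cusps and the number $s$ of switches, it therefore suffices to construct, for each move, a bijection between the $\rho$-graded rulings of the two diagrams that agrees outside $D$ and preserves $\theta$.

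First I would clear away the trivial cases: ambient isotopies of the front do not affect the arcs, the partition into components, the switches, or the cusps, so they induce the identity on rulings and fix $\theta$. For each genuine Reidemeister move, a ruling restricts to a fixed ruling on the complement of $D$, so the content is purely local: one must determine which assignments of ``switch/non-switch'' to the crossings lying in $D$ extend to an honest $\rho$-graded ruling of the whole diagram — i.e.\ satisfy item (4) at every switch, respect the grading condition, and still cut the arcs into disk-bounding components each carrying one left and one right cusp — and then match these finitely many local possibilities across the move. When a move leaves the number of cusps unchanged, $k$ is constant, so the required bijection must preserve the number $s$ of switches; when a move changes the number of right cusps by some $\delta$, one checks that its local model forces a corresponding change of $\delta$ in the number of switches, so that $\theta=k-s$ is again preserved.

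The core of the argument, and where I expect the real work to be, is the triple-point (Reidemeister-III-type) move, in which three strands cross pairwise and the local picture is replaced by its mirror image. Here $k$ does not change, so one wants an $s$-preserving bijection. I would enumerate the at most $2^3$ switch-patterns on the three crossings on each side, discard those violating the nesting condition — the surviving list depends on the over/under information and on whether the three ruling components passing through $D$ are pairwise distinct, coincident, or partly coincident, so the analysis naturally splits into subcases — and then write down the explicit matching; in every subcase the number of switched crossings among the three should come out the same before and after. The Reidemeister-II-type moves (a strand passing another strand, or passing a cusp) introduce, remove, or relocate crossings without changing the cusp count; the key point there is a short verification that the local switch count is forced by item (4) and is the same on both sides, so that $s$ is unchanged and the extension of a ruling across the move is in fact unique. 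Assembling these local bijections yields a $\theta$-preserving bijection realizing each move, and hence the Legendrian-isotopy invariance of the multiset $\Theta_\rho(\Lambda)$.
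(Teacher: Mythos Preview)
The paper does not give its own proof of this theorem; it merely states it and attributes it to Chekanov--Pushkar \cite{ChP}. Your approach---checking invariance of the multiset $\Theta_\rho$ under the \'Swi\k{a}tkowski front Reidemeister moves by constructing local, $\theta$-preserving bijections between rulings---is exactly the method used in \cite{ChP}, so at the level of strategy you are on target.

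That said, your enumeration of the front Reidemeister moves is slightly off and leaves a genuine case unaddressed. There is no ``strand passing another strand'' move in the front projection: such a move would force a vertical tangency, which is forbidden. The moves you need are (i) the triple-point move (your RIII, handled correctly in outline), (ii) the cusp--strand move in which a cusp slides past a transverse strand (this is the genuine RII-type front move; it changes the number of crossings but not the number of cusps), and (iii) the RI-type ``tongue'' move in which two cusps and one crossing are simultaneously created or destroyed. You allude to case (iii) only in the general remark that ``when a move changes the number of right cusps by some $\delta$, one checks\ldots'', but you never actually list or analyze it. This case is not automatic: one must verify that the new crossing has grading $\pm 1$ (hence is never a switch when $\rho=0$, and one must argue separately for other $\rho$), determine which ruling extensions are compatible with conditions (2) and (4), and confirm that $k-s$ is preserved. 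In \cite{ChP} this check is carried out explicitly, and you should do the same rather than leave it as a parenthetical.
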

\noindent
For example, one can use this invariant (with $\rho=0$) to distinguish the Chekanov examples from Figure~\ref{fig:CHex}.

Rulings turn out to be closely connected to augmentations. Indeed, by combined work of Fuchs, Ishkhanov, and Sabloff, we have the following result.

\begin{theorem}[\cite{Fuchs,FI,Sabloff}]
For any $\rho$ dividing $2\rot(\Lambda)$, 
\label{thm:ruling-aug}
the Legendrian knot $\Lambda$ has a $\rho$-graded augmentation to $\Z_2$ if and only if it has a $\rho$-graded ruling.
\end{theorem}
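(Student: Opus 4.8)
The plan is to work with the version of the DGA attached to the front projection $F(\Lambda)$ described in Section~\ref{frontdef}, whose generators are the crossings and right cusps of $F(\Lambda)$, and to set up a dictionary between $\rho$-graded rulings and augmentations $\e\colon \AlgL\to\Z_2$. Both constructions become manageable after Fuchs' device of \emph{dipping} the front: by a sequence of Legendrian isotopies one inserts into $F(\Lambda)$, between consecutive crossings and cusps, small families of up/down zigzags (``dips''), producing a Legendrian-isotopic front whose DGA has a very rigid structure. In the dipped front the generators split into the original crossings and right cusps together with a family of ``dip generators'' for each dip; the differential restricted to a single dip is modeled on a standard piece, and the only interaction terms link a crossing or cusp to the dips immediately to its left and right. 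Since dipping is realized by Legendrian isotopy, the complete $\rho$-graded ruling invariant $\Theta_\rho$ is unchanged (Chekanov--Pushkar \cite{ChP}) and the stable tame isomorphism type of the DGA is unchanged (Theorem~\ref{maininvariance}); in particular neither the existence of a $\rho$-graded ruling nor the existence of an augmentation is affected, so it suffices to prove the equivalence for dipped fronts, where everything is local.

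For the direction ``ruling $\Rightarrow$ augmentation'' I would, given a $\rho$-graded ruling $R$ of the dipped front, define $\e_R$ on generators by setting $\e_R(a)=1$ exactly when $a$ is a crossing of the \emph{original} front that is a switch of $R$, prescribing the values of $\e_R$ on the dip generators according to how the ruling curves thread through each dip, and setting $\e_R=0$ on every other generator. That $\e_R$ respects the $\Z/\rho\Z$ grading is immediate from ruling axiom~(3) (switches have grading divisible by $\rho$) together with the grading conventions of Section~\ref{frontdef}. The real point is to verify $\e_R\circ\d=0$ on each generator: the monomials in $\d a$ record admissible disks in the front, and the fact that each $R_i$ bounds an embedded disk, combined with the nesting-or-disjointness axiom~(4) (Figure~\ref{fig:switches}), forces the nonzero terms in $\e_R(\d a)$ to cancel in pairs. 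In the dipped model this reduces to a finite case check local to each crossing and cusp.

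For ``augmentation $\Rightarrow$ ruling'' I would run this backwards. Given $\e$, the equation $\e\circ\d=0$ on the dip generators adjacent to a crossing $a$ of the original front determines whether the ruling curves through $a$ should ``switch'' there; declaring $a$ a switch exactly in that case, one partitions the arcs of $F(\Lambda)$ into embedded paths $R_1,\dots,R_k$. One must then check that $R=\{R_1,\dots,R_k\}$ is a $\rho$-graded ruling: that each $R_i$ is a disk-bounding circle with exactly one left and one right cusp (follow the arcs through the diagram, noting that at a non-switch crossing the two ruling curves pass through transversally); that every switch has grading divisible by $\rho$ (immediate, since $\e$ vanishes on generators whose grading is not divisible by $\rho$); and that the nesting-or-disjointness axiom~(4) holds, which is again a consequence of $\e\circ\d=0$, since a disallowed switch configuration would force a monomial in some $\d a$ that no augmentation can annihilate.

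The main obstacle is precisely this translation, in both directions, between the algebraic identity $\e\circ\d=0$ and the geometric axiom~(4): showing that augmentations ``see'' exactly the nested-or-disjoint switch configurations is the heart of the theorem, and it is what forces the passage to dipped fronts, so that the relevant disks become local and enumerable. A secondary subtlety is that the correspondence is not bijective --- a ruling typically supports many augmentations and conversely --- so the two recipes must be arranged to land in the right (nonempty) set; since the statement concerns only existence, producing one object of each type from the other suffices. The finer enumerative comparison of counts of rulings and augmentations alluded to in Section~\ref{rulingaugs} requires the additional bookkeeping of \cite{Fuchs,FI,Sabloff}, later streamlined through the technology of Morse complex sequences.
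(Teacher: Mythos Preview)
The paper does not give its own proof of this theorem; it is stated as a result imported from the cited references \cite{Fuchs,FI,Sabloff}, and the surrounding text only records the refinement due to Leverson and the later enumerative sharpening in \cite{NgSab}. So there is nothing in the paper to compare your argument against beyond the attribution.

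That said, your outline does track the approach of the cited references: Fuchs' splash/dip construction is indeed the device that localizes the problem, and the two directions are split roughly as you describe (ruling $\Rightarrow$ augmentation is Sabloff's half, augmentation $\Rightarrow$ ruling is the Fuchs--Ishkhanov half). A couple of points worth flagging. First, in the direction augmentation $\Rightarrow$ ruling one does not simply read off the switch data from the original crossings; one must first use tame isomorphisms to put the augmentation into a normal form compatible with the dip structure, and this normalization step is where much of the work in \cite{FI} lies --- your sketch elides it. Second, the check that the $R_i$ actually bound embedded disks (and not merely close up as immersed curves) is not automatic from ``following the arcs'': it is forced by the combination of $\e\circ\d=0$ at the right cusps and the nesting condition, and deserves an explicit argument. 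These are the two places where your proposal is an outline rather than a proof; the overall architecture is sound.
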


Theorem~\ref{thm:ruling-aug} has been extended by Leverson \cite{Leverson}, who proved that the existence of an augmentation to any field is equivalent to the existence of a ruling. This is not true if we replace ``field'' by an arbitrary unital ring; see Section~\ref{ssec:rep} below.

It turns out there is a precise correspondence between rulings and augmentations. To state the correspondence we need a bit more notation. We restrict our discussion to augmentations to $\Z_2$, though see \cite{HenryRutherford} for a generalization to arbitrary finite fields.

Because of DGA stabilizations, the number of $\rho$-graded augmentations of $(\AlgL, \partial_\Lambda)$ to $\Z_2$ is not an invariant of $\Lambda$ up to Legendrian isotopy, but there is a normalized count that is. More specifically, given any $\rho$ that divides $2\rot(\Lambda)$, let $a_k$ be the number of generators of $\AlgL$ (in the front projection, that is, crossings and right cusps)
with grading $k$ modulo $\rho$. The shifted Euler characteristic of $(\AlgL, \partial_\Lambda)$ when $\rho=0$ is defined to be
\[
\chi_0^*(\AlgL)= \sum_{k\geq 0} (-1)^ka_k + \sum_{k<0} (-1)^{k+1} a_k
\]
and if $\rho$ is odd then it is
\[
\chi_\rho^*(\AlgL)= \sum_{k= 0}^{\rho-1} (-1)^ka_k.
\]
We now define the {\em normalized $\rho$-graded augmentation number}\footnote{There is also a related but more categorical notion of normalized augmentation number in terms of the cardinality of the augmentation category; see \cite{NRSS}.} to be
\[
Aug_\rho(\Lambda)= 2^{-\chi_\rho^*(\AlgL)/2} \cdot \text{(number of $\rho$-graded augmentations of $\AlgL$)}.
\] 
This number can easily be checked to be an invariant of $\Lambda$ up to Legendrian isotopy, and for instance it provides yet another way to distinguish the Chekanov knots (Example~\ref{ex:chex}): 
$Aug_0(\Lambda_1) = \sqrt{2}$ while $Aug_0(\Lambda_2) = 3/\sqrt{2}$. We can now state the explicit connection between rulings and augmentations. 
\begin{theorem}[\cite{NgSab}]
Given a Legendrian knot $\Lambda$ and a number $\rho$ that divides $2\rot(\Lambda)$ and is either 0 or odd, then there is a many-to-one correspondence between $\rho$-graded augmentations of $(\AlgL, \partial_\Lambda)$ and $\rho$-graded rulings of $F(\Lambda)$. More specifically, there are $2^{\theta(R)+\chi_\rho^*(\AlgL)/2}$ $\rho$-graded augmentations corresponding to each $\rho$-graded ruling $R$. 
\end{theorem}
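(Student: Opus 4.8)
The plan is to reduce the statement to an explicit combinatorial count on a standard \emph{dipped front diagram}. Recall (following Fuchs, and the normal-form techniques behind Theorem~\ref{thm:ruling-aug}) that from $F(\Lambda)$ one obtains, via a sequence of Legendrian Reidemeister moves, a dipped front in which all crossings and cusps lie at distinct $x$-coordinates and a standardized tangle --- a ``dip'', in which every pair of currently present strands crosses once in a prescribed order --- has been inserted between every pair of consecutive such $x$-values. Dipping realizes a stable tame isomorphism, hence changes the number of $\rho$-graded augmentations only by powers of $2$ governed by the degrees modulo $\rho$ of the new generators. I would first establish the correspondence and the count on a dipped front, and then show that when a single dip is removed the exponent $\theta(R) + \chi_\rho^*(\AlgL)/2$ changes by exactly the same power of $2$ as the number of $\rho$-graded augmentations lying over the corresponding ruling $R$; since every front is a dipped front with all dips removed, this yields the theorem in general.

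For a dipped front the DGA is very rigid. Its generators are the crossings and right cusps of $F(\Lambda)$ together with, for each dip spanning $r$ strands, generators $x_{ij}$ ($1 \le i < j \le r$) graded by the Maslov-potential difference of strands $i$ and $j$, with $\partial_\Lambda x_{ij} = \sum_{i<k<j} \pm x_{ik} x_{kj}$ plus correction terms recording the crossings or cusps adjacent to the dip. This differential is strictly filtered (by $j-i$ within a dip, and by $x$-coordinate globally), so an augmentation $\epsilon$ may be built by a left-to-right sweep, one dip at a time. At each dip, $\epsilon \circ \partial_\Lambda = 0$ forces the matrix $I + (\epsilon(x_{ij}))$ to be a unipotent upper-triangular involution over $\Z_2$ --- that is, to encode a matching of the strands --- modified at each crossing by the corresponding transposition when $\epsilon$ of that crossing is nonzero (possible only when its degree is divisible by $\rho$) and modified at each right cusp by forcing the two merging strands to be matched (the ``$1+\cdots$'' in the differential of a cusp generator). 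The requirement that the running data stays an upper-triangular involution is exactly the nesting/disjointness condition on ruling disks in Figure~\ref{fig:switches}. Hence the sequence of matchings read off from $\epsilon$ is precisely the combinatorial data of a $\rho$-graded ruling $R$, and $\epsilon \mapsto R$ is the desired map.

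It then remains to show that every fiber of $\epsilon \mapsto R$ has size $2^{\theta(R) + \chi_\rho^*(\AlgL)/2}$. With $R$ fixed, the entries of $\epsilon$ on the $x_{ij}$ singled out by the matchings of $R$ are determined, and the remaining values of $\epsilon$ --- on the other $x_{ij}$ and on the crossings and right cusps of the front --- satisfy an affine-linear system over $\Z_2$. One solution exists (Fuchs's construction of an augmentation from a ruling), so the fiber is an affine space whose dimension is that of the associated linear system. That dimension I would compute as a sum of local contributions: one free $\Z_2$ parameter per ruling component, equivalently per right cusp, giving $+k$; one constraint per switch, giving $-s$; these combine to $\theta(R) = k - s$; and a global term of $\chi_\rho^*(\AlgL)/2$ coming from pairing the dip generators against the generators of the underlying front in the given grading.

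The main obstacle is precisely this dimension count. The delicate points are: verifying that the affine space of augmentations over a fixed ruling has the \emph{same} dimension for every $\rho$-graded ruling $R$; correctly tracking the large number of auxiliary dip generators and the way the correction terms in $\partial_\Lambda x_{ij}$ link consecutive dips; showing that births, switches, and non-switched ``returns'' contribute exactly $+1$, $-1$, and $0$ respectively to the exponent; and maintaining the grading discipline so that only parameters in degrees divisible by $\rho$ survive. A clean way to organize the bookkeeping is to perform the count first on a maximally dipped front and then invoke the move-by-move invariance of both sides from the first paragraph to descend to an arbitrary front.
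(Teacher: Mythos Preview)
The paper under review is a survey and does \emph{not} prove this theorem; it is simply stated with a citation to \cite{NgSab}. So there is no ``paper's own proof'' to compare against. That said, your outline is in the spirit of the actual argument in \cite{NgSab} (which builds on \cite{Fuchs,Sabloff}) and of its later refinements such as \cite{HenryRutherford}, so let me comment on the proposal itself.

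Your description of the structure of a dip and of the augmentation condition is not quite right, and this is where the genuine gap lies. A dip in the sense of \cite{Fuchs} (and as used in \cite{NgSab,HenryRutherford}) consists of \emph{two} lattices of crossings, not one: an ``$a$-lattice'' and a ``$b$-lattice'', and the differential of a $b$-generator involves $a$-generators on both sides together with the adjacent crossing or cusp. The equation $\epsilon\circ\partial_\Lambda=0$ on these generators does not say that $I+(\epsilon(x_{ij}))$ is an involution; rather it is a matrix identity of the form $A_{k+1} = P_k A_k P_k^{-1}$ (or a birth/death modification thereof) relating the $a$-matrices at consecutive dips, where $P_k$ records the intervening crossing. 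The ruling---which \emph{is} a fixed-point-free involution on the strands---is extracted not from $A$ being an involution but from the pattern of which superdiagonal entries of $A$ are the leading nonzero entries in their rows. Conflating ``$A$ encodes a matching'' with ``$A$ is an involution'' would make the subsequent fiber count go wrong.

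The second gap is the one you yourself flag: the dimension count. You assert that the fiber over $R$ is an affine space and list the intended local contributions ($+1$ per ruling disk, $-1$ per switch, $0$ per return), but you give no argument that the linear system actually has this rank, nor that returns contribute $0$, nor that the residual $\chi_\rho^*(\AlgL)/2$ arises from the dip bookkeeping. In \cite{NgSab} this is exactly the heart of the paper: one shows, by a careful case analysis at each crossing (switch, departure, return) and each cusp, that the number of augmentations extending a partial augmentation across that feature is a specific power of $2$ depending only on the local ruling picture and the Maslov potentials, and then multiplies. Your sweep framework is the right scaffolding, but the local counts---especially the verification that a return imposes exactly one constraint balancing exactly one free parameter---are the substance of the proof and are missing here.
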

This theorem moreover allows one to determine the normalized count of augmentations from the rulings as follows: if $\rho$ divides $2\rot(\Lambda)$ and is 0 or odd then 
\[
Aug_\rho(\Lambda)=\sum_{\theta\in \Theta_\rho(\Lambda)} 2^{\theta/2}.
\]

In 2005, Rutherford \cite{Ruth} discovered a beautiful connection between (ungraded) rulings and topology that says, among other things, that $\Theta_1(\Lambda)$ only depends on the underlying topological knot type of $\Lambda$ and $\tb(\Lambda)$. To state his result we first recall the Kauffman and HOMFLY polynomials of a knot $K$. The Kauffman polynomial $F_K(a,z)$ of a knot $K$ is 
defined as 
\[
F_K(a,z) = a^{-w(D_K)}A_{D_K}(a,z),
\]
where $w(D_K)$ is the writhe of the knot a diagram $D_K$ for $K$ 
and $A_{D_K}$ is a polynomial defined for the diagram $D_K$, uniquely characterized by the skein relations 
\[
D_{K_+}-D_{K_-}= z(D_{K_0}-D_{K_\infty}),
\]
\[
D_{S_+}=aD_A,\quad D_{S_-}=a^{-1}D_S,
\]
and $D$ of the unknot is $1$, where the diagrams are shown in Figure~\ref{fig:Kauffman}.
\begin{figure}[htb]
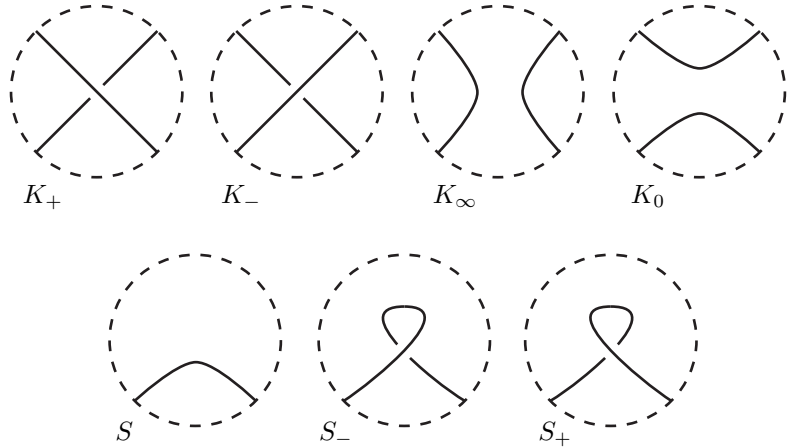
{\small
\begin{overpic}{figures/Kauffman}
\put(5, 85){$K_+$}
\put(80, 85){$K_-$}
\put(160, 85){$K_\infty$}
\put(235, 85){$K_0$}
\put(40, -5){$S$}
\put(117, -5){$S_-$}
\put(200, -5){$S_+$}
\end{overpic}}
\caption{Diagram regions for the skein relations for $F_K(a,z)$.}
\label{fig:Kauffman}
\end{figure}
The HOMFLY polynomial $P_K(a,z)$ of a knot $K$ is similarly defined using $D_K$:
\[
P_K(a,z) = a^{-w(D_K)}B_{D_K}(a,z),
\]
where $B_{D_K}$ is a polynomial defined for the diagram $D_K$ and uniquely characterized by the skein relations 
\[
B_{K_+}-B_{K_-}= zH_{K_0},
\]
\[
B_{S_+}=a B_S,\quad B_{S_-}=a B_S,
\]
and $B$ of the unknot is 1, where the $K_+$, $K_-$ and $K_0$ are all oriented left to right in Figure~\ref{fig:Kauffman}.

There are upper bounds on $\tb$ from both the HOMFLY-PT \cite{FW,Morton} and Kauffman \cite{Rudolph} polynomials: if $\deg_a$ means the maximal degree of the polynomial in the variable $a$, then we have
\begin{align*}
\tb(\Lambda)+ |\rot(\Lambda)| &\leq - \deg_a P_\Lambda(a,z)-1 \\
\tb(\Lambda) &\leq -\deg_a F_{\Lambda}(a,z)-1.
\end{align*}
Following Rutherford we now define the {\em ruling polynomial of $\Lambda$} to be
\[
R_\Lambda(z)= \sum_{\theta\in \Theta_1(\Lambda)} z^{-\theta+1}.
\]
We can also define the {\em oriented ruling polynomial of $\Lambda$}. To this end consider the subset $\Theta_2(\Lambda)$ of $\Theta_1(\Lambda)$. One may check that these are precisely the $\theta$ that come from ``oriented rulings'', that is, rulings where we only allow switches at positive crossings in the diagram:
\[
OR_\Lambda(z)= \sum_{\theta\in \Theta_2(\Lambda)} z^{-\theta+1}.
\]
It is a result of Sabloff \cite{Sabloff} that $\Lambda$ can have an oriented ruling only if $\rot(\Lambda) = 0$. 

\begin{theorem}[Rutherford \cite{Ruth}]
For any Legendrian knot $\Lambda$ of topological type $K$,
\label{thm:Rutherford}
the ruling polynomial $R_\Lambda(z)$ and oriented ruling polynomial $OR_\Lambda(z)$ agree with the (polynomial) coefficient of $a^{\tb(\Lambda)-1}$ in $F_K(a,z)$ and $P_K(a,z)$, respectively.
\end{theorem}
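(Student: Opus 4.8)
The plan is to establish Theorem~\ref{thm:Rutherford} via the recursive (skein-theoretic) structure of both the ruling polynomials and the Kauffman/HOMFLY-PT polynomials. First I would observe that the ruling polynomial $R_\Lambda(z)$ can itself be computed by a skein-type recursion on the front diagram $F(\Lambda)$: choose a crossing of $F(\Lambda)$, and relate the generating function of rulings of the original diagram to those of the diagrams obtained by (a) smoothing the crossing in the two possible ways compatible with the front structure, and (b) the diagram where that crossing is switched versus not switched. Carefully enumerating the four local behaviors of a ruling at a crossing — pass-through in each of two ways, switch, or ``no ruling disk uses this crossing as a switch'' — and tracking how $\theta(R)=k-s$ changes, one gets a linear relation among the ruling-polynomial contributions that, after normalizing by the appropriate power of $z$ coming from the $-\theta+1$ shift, matches the Kauffman skein relation $D_{K_+}-D_{K_-}=z(D_{K_0}-D_{K_\infty})$ together with the cusp/stabilization relations $D_{S_\pm}=a^{\pm1}D$. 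The same argument with switches restricted to positive crossings produces the HOMFLY-PT skein relation for the oriented ruling polynomial.

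The key steps, in order, are: (1) put $F(\Lambda)$ in a convenient ``plat'' or generic position and set up the local model at a chosen crossing; (2) prove the local ruling recursion, i.e.\ that summing $z^{-\theta(R)+1}$ over rulings decomposes according to the behavior at that crossing into contributions matching the $K_+,K_-,K_0,K_\infty$ (resp.\ $K_0$ only, in the oriented case) diagrams — this is where one must check the bookkeeping of $k$ and $s$ under each move, and in the oriented case confirm that orientations are consistent so that switches only occur at positive crossings; (3) handle the base case: a front with no crossings (a disjoint union of ``flying saucers'') has a unique ruling with $\theta = (\text{number of components}) = -\tb$, giving ruling polynomial $z^{\tb+1}$... more precisely one checks directly that $R_\Lambda(z)$ for the maximal-$\tb$ unknot matches the $a^{\tb-1}$ coefficient of $F_{\text{unknot}}$, and similarly for HOMFLY-PT; (4) show that both sides of the claimed identity (the ruling polynomial, and the $a^{\tb(\Lambda)-1}$-coefficient of $F_K(a,z)=a^{-w(D_K)}A_{D_K}(a,z)$) obey the same recursion with the same normalization — here one must be careful that the writhe normalization $a^{-w}$ and the cusp contributions $a^{\pm1}$ conspire so that tracking only the single power $a^{\tb(\Lambda)-1}$ of $a$ in the Kauffman recursion reproduces exactly the $z$-linear ruling recursion; and (5) conclude by induction on the number of crossings (or on a suitable complexity of the front), invoking the Chekanov--Pushkar invariance (the theorem just above) to know $R_\Lambda(z)$ depends only on the allowable data.

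I expect the main obstacle to be step (2) together with the $a$-degree tracking in step (4): one needs to verify that among all terms generated by the Kauffman/HOMFLY-PT skein recursion, exactly those contributing to the coefficient of $a^{\tb(\Lambda)-1}$ survive and organize into the ruling recursion, and in particular that lower- and higher-$a$-degree terms do not interfere. This requires the sharp Kauffman/HOMFLY-PT estimates $\tb(\Lambda)\le -\deg_a F_\Lambda(a,z)-1$ and $\tb(\Lambda)+|\rot(\Lambda)|\le-\deg_a P_\Lambda(a,z)-1$ recalled above, used to pin down that $a^{\tb(\Lambda)-1}$ is the top relevant $a$-power along the whole recursion; combined with a careful sign and orientation analysis at each crossing, this yields the identification. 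A secondary subtlety is the oriented case, where one must check that an oriented ruling exists only when $\rot(\Lambda)=0$ (Sabloff's result, recalled above) so that the HOMFLY-PT inequality degenerates appropriately and the $a^{\tb(\Lambda)-1}$-coefficient is the right target.
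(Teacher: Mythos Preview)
The paper does not prove this theorem: it is a survey, and Theorem~\ref{thm:Rutherford} is simply quoted from Rutherford's paper \cite{Ruth} and then used as a black box (for instance in the proof of Theorem~\ref{thm:2drep}). So there is no proof in the paper to compare your proposal against.

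That said, your skein-theoretic outline is in the spirit of Rutherford's actual argument, and the obstacle you single out in step~(4) is the genuine one. One point to be careful about: the Kauffman skein relation relates four diagrams $K_+, K_-, K_0, K_\infty$, but passing from $K_+$ to $K_-$ on a Legendrian front changes $\tb$ by $\pm 2$, and passing to $K_0$ or $K_\infty$ can change both $\tb$ and the cusp count. Thus ``the coefficient of $a^{\tb(\Lambda)-1}$'' refers to a \emph{different} power of $a$ on each of the four terms, and you must check that the writhe normalization $a^{-w}$ together with the cusp relations $D_{S_\pm}=a^{\pm 1}D_S$ makes these shifts line up \emph{exactly} with the $\tb$ shifts, so that the top-$a$-degree part closes up under the recursion. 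The $\tb$ upper bounds you cite show there is no interference from above, but you also need that the four local modifications stay within the class of Legendrian fronts (possibly after adding or removing cusps), and that no lower-degree terms feed back in. Rutherford handles this by working in the Kauffman (resp.\ HOMFLY-PT) skein module and checking directly that the ruling polynomial descends to a well-defined linear functional there; your inductive version is equivalent but you should make that descent explicit rather than relying on the $\tb$ bound alone.
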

Notice that this theorem says that ungraded rulings (both oriented and not) are entirely determined by the underlying knot type of the Legendrian knot and the classical invariants. So in particular one will not be able to use ungraded rulings to distinguish Legendrian knots with the same classical invariants! Also notice that an immediate corollary of the theorem is that the Kauffman bound on $\tb$ is sharp if and only if $\Lambda$ admits an ungraded ruling.

Moreover, if a Legendrian has an ungraded ruling then its Thurston--Bennequin invariant is maximal for Legendrian representatives of its knot type. 

\subsection{DGA representations}
\label{ssec:rep}

Much of the existing work on augmentations of Legendrian knots has focused on augmentations to a field. It is however also interesting to consider augmentations to other unital rings $S$ that are not fields. A particular case is when $S = \Mat_n(\kk)$, the algebra of $n\times n$ matrices over a field $\kk$. We call an augmentation $\rho :\thinspace (\A_\Lambda,\d_\Lambda) \to (\Mat_n(\kk),0)$ an \textit{$n$-dimensional representation} of the DGA $(\A_\Lambda,\d_\Lambda)$. Note that $1$-dimensional representations are precisely augmentations to $\kk$ and these all factor through the abelianization of $\A_\Lambda$. An advantage of considering higher-dimensional representations is that these allow us to use the noncommutativity of $\A_\Lambda$ in a more fundamental way, since these representations do not necessarily factor through the abelianization. Here it is useful to use the fully noncommutative DGA $\A_\Lambda$ rather than the form of the DGA that appears in for example \cite{ENS}, where $t$ commutes with Reeb chords, since stipulating that $\rho(t)$ and $\rho(a)$ commute for all Reeb chords $a$ significantly cuts down on the set of representations. For example, Theorem~\ref{thm:rep-sat} below, which relates representations of the DGA to augmentations of a satellite, is only true if we use the fully noncommutative DGA.

The existence of a representation of $(\A_\Lambda,\d_\Lambda)$, like the existence of an augmentation, is an obstruction to the DGA being trivial, and thus to $\Lambda$ being stabilized. There are Legendrian knots that have no augmentations but do have higher-dimensional representations. The earliest work on this was by Sivek \cite{Sivek}, who found a family of Legendrian torus knots of type $T(p,-q)$, where $q>p\geq 3$ and $p$ is odd, that have $2$-dimensional representations but no augmentations to $\Z_2$. In particular, the knot $8_{19} = T(3,-4)$ falls into this family:

\begin{theorem}[\cite{Sivek}]
The DGA of the Legendrian knot shown in Figure~\ref{fig:torusknot} 
\label{thm:2drep}
admits an ungraded $2$-dimensional representation but not an ungraded $1$-dimensional representation over $\Z_2$.
\end{theorem}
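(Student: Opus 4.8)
The plan is to reduce both halves of the statement to finite, explicit computations with the Chekanov--Eliashberg DGA of the diagram in Figure~\ref{fig:torusknot}. First I would write $(\AlgL,\partial_\Lambda)$ down concretely: either resolve the front $F(\Lambda)$ to a Lagrangian diagram using Lemma~\ref{morsification} and apply the recipe of Section~\ref{firstdef}, or apply the front-projection recipe of Section~\ref{frontdef} directly. Either way one obtains a finite generating set $a_1,\dots,a_n$ (the crossings and right cusps) together with $t^{\pm1}$, and an explicit formula for each $\partial_\Lambda a_i$. Since the theorem concerns \emph{ungraded} representations, the $\Z$-grading is irrelevant from here on, and the two assertions become independent purely-algebraic questions about this one DGA.

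For the non-existence of an ungraded $1$-dimensional representation over $\Z_2$: such a representation is exactly an augmentation $\epsilon\colon(\AlgL,\partial_\Lambda)\to(\Z_2,0)$. Since $\Z_2^\times=\{1\}$ we must have $\epsilon(t)=1$, so $\epsilon$ factors through the abelianization with $t\mapsto1$, and the condition $\epsilon\circ\partial_\Lambda=0$ becomes a system of polynomial equations over $\Z_2$ in the unknowns $\epsilon(a_i)\in\{0,1\}$. One checks that this system has no solution; the check is finite (at most $2^n$ cases) and in practice short, since several of the $\partial_\Lambda a_i$ contain a constant term $1$ that forces the values of various $\epsilon(a_j)$ and quickly produces a contradiction. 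Alternatively and more conceptually, by Theorem~\ref{thm:ruling-aug} a $\Z_2$-augmentation exists iff $F(\Lambda)$ admits an ungraded ruling, and by Theorem~\ref{thm:Rutherford} together with the remark following it an ungraded ruling exists iff the Kauffman bound $\tb(\Lambda)\le-\deg_a F_K(a,z)-1$ is sharp for $K=T(3,-4)$; since this bound is not sharp for this knot (as one reads off from its Kauffman polynomial), $\Lambda$ has no ungraded ruling and hence no augmentation to $\Z_2$.

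For the existence of an ungraded $2$-dimensional representation I would exhibit an explicit DGA map $\rho\colon\AlgL\to(\Mat_2(\Z_2),0)$. Because $\rho$ is an algebra map and $\partial_\Lambda$ is a derivation, $\rho\circ\partial_\Lambda=0$ holds on all of $\AlgL$ as soon as it holds on the generators; and since $\partial_\Lambda t=\partial_\Lambda t^{-1}=0$, one only needs $\rho(t)\in GL_2(\Z_2)$ with $\rho(t^{-1})=\rho(t)^{-1}$, and $\rho(\partial_\Lambda a_i)=0\in\Mat_2(\Z_2)$ for $i=1,\dots,n$. The key point is that, because we use the fully noncommutative DGA and do \emph{not} require $\rho(t)$ to commute with the $\rho(a_i)$ (nor the $\rho(a_i)$ with each other), there is far more room here than in the commutative, $1$-dimensional case: one searches for a solution built from a small number of non-commuting $2\times2$ matrices over $\Z_2$ adapted to the repetitive combinatorics of the $T(3,-4)$ front, or, failing insight, simply runs through the finite set $GL_2(\Z_2)\times\Mat_2(\Z_2)^n$. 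Once such matrices are written down, checking $\rho(\partial_\Lambda a_i)=0$ for all $i$ is a routine finite matrix computation; note moreover that the representation so obtained is automatically irreducible, since an invariant line would restrict $\rho$ to an augmentation, which the first part rules out.

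The main obstacle is this last step: no algorithm points you at the representation, and the genuine content of the theorem is recognizing which non-commuting $2\times2$ matrices over $\Z_2$ simultaneously annihilate every $\partial_\Lambda a_i$ — something the structure of the torus-knot front makes feasible but still requires a real (if modest) search. Everything else — producing the DGA and carrying out the two verifications — is mechanical.
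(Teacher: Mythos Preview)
Your proposal is correct and matches the paper's approach: for non-existence of a $1$-dimensional representation the paper uses exactly your second alternative (the coefficient of $a^{\tb(\Lambda)-1}=a^{-13}$ in the Kauffman polynomial of $T(3,-4)$ vanishes, so by Rutherford and Theorem~\ref{thm:ruling-aug} there are no ungraded rulings and hence no augmentations), and for existence it exhibits an explicit $\rho$ sending $t\mapsto I$, the cusps to $0$, and the crossings to one of the two strictly upper/lower triangular nilpotent matrices according to the coloring in Figure~\ref{fig:torusknot}. The only minor misemphasis is that the crucial noncommutativity here is among the crossing images $\rho(a_i)$ rather than between $\rho(t)$ and the $\rho(a_i)$, since in fact $\rho(t)=I$.
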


\begin{figure}[htb]
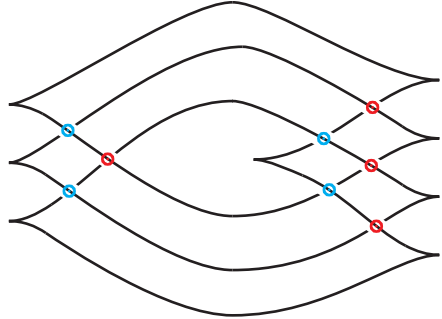
{\small
\begin{overpic}{figures/torusknot}
\end{overpic}}
\caption{A Legendrian knot of type $T(3,-4)$.}
\label{fig:torusknot}
\end{figure}

\begin{proof}
For the DGA $(\A_\Lambda,\d_\Lambda)$ of this knot in the front projection (where the base point is placed anywhere), one can check that the map $\rho: \A_\Lambda \to \Mat_2(\Z_2)$ sending $t$ to $\left( \begin{smallmatrix} 1 & 0 \\ 0 & 1\end{smallmatrix}\right)$, each dark-dotted crossing to $\left( \begin{smallmatrix}0 & 0 \\ 1 & 0\end{smallmatrix} \right)$, each light-dotted crossing to $\left( \begin{smallmatrix} 0 & 1 \\ 0 & 0\end{smallmatrix} \right)$, and the cusps to $0$ satisfies $\rho \circ \d_\Lambda = 0$. On the other hand, the coefficient of $a^{tb(\Lambda)-1} = a^{-13}$ in the Kauffman polynomial $F_{T(3,-4)}(a,z)$ is $0$; so by Theorem~\ref{thm:Rutherford}, $\Lambda$ has no rulings. It follows from Theorem~\ref{thm:ruling-aug} that $\Lambda$ has no ungraded augmentations.
\end{proof}

We close this section by noting that there is a correspondence between $n$-dimensional representations of the DGA $(\A_\Lambda,\d_\Lambda)$ and augmentations of a certain Legendrian link consisting of $n$ parallel copies of $\Lambda$. More precisely, let $\Lambda^{(n)}$ denote the $n$-component Legendrian link whose front consists of $n$ copies of the front of $\Lambda$, pushed off from each other by small perturbations in the Reeb, that is, the $z$ direction, with one segment of the $n$ parallel fronts replaced by a full positive twist.
We then have the following result.

\begin{theorem}[\cite{NR}]
The DGA for $\Lambda$ has an $n$-dimensional representation over $\Z_2$
\label{thm:rep-sat}
if and only if the DGA for $\Lambda^{(n)}$ has an augmentation to $\Z_2$.
\end{theorem}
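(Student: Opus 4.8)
The plan is to produce an explicit dictionary, in both directions, between the combinatorial data of an $n$-dimensional representation $\rho :\thinspace (\A_\Lambda,\d_\Lambda) \to (\Mat_n(\Z_2),0)$ and an augmentation $\epsilon :\thinspace (\A_{\Lambda^{(n)}},\d_{\Lambda^{(n)}}) \to (\Z_2,0)$. The first step is to get a concrete description of the DGA of $\Lambda^{(n)}$ in terms of that of $\Lambda$. Since $\Lambda^{(n)}$ is the link whose front is $n$ copies of $F(\Lambda)$ pushed off in the $z$-direction with a single full positive twist inserted on one strand, each crossing (and right cusp) $a$ of $F(\Lambda)$ spawns an $n \times n$ array of crossings $a_{jk}$ in $F(\Lambda^{(n)})$ (the crossing of the $j$-th copy with the $k$-th copy near $a$), plus the crossings coming from the inserted full twist. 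Holomorphic disks for $\Lambda^{(n)}$ near a cluster of crossings over $a$ can be read off from disks for $\Lambda$ near $a$: a disk for $\Lambda$ with corners at $a, b_1,\ldots,b_r$ lifts to disks for $\Lambda^{(n)}$ whose corners are $a_{j_0 j_r}, b_{1,j_0 j_1}, b_{2,j_1 j_2},\ldots,b_{r,j_{r-1}j_r}$ for any sequence of sheet-indices $j_0,\ldots,j_r$ — precisely the index bookkeeping of matrix multiplication. Thus assembling the generators $a_{jk}$ into a single matrix $\mathbf{a} = (a_{jk}) \in \Mat_n(\A_{\Lambda^{(n)}})$, one shows $\d_{\Lambda^{(n)}}$ acts on these matrices of generators by the ``same formula'' as $\d_\Lambda$ acts on the generators of $\A_\Lambda$, i.e.\ $\d_{\Lambda^{(n)}}(\mathbf{a})$ equals the matrix obtained from $\d_\Lambda(a)$ by substituting matrices for Reeb chords, up to correction terms from the full twist. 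This is exactly the observation that underlies \cite{NR}, and the extra crossings from the full positive twist are designed so that, after a change of generators, they encode the identity matrix (equivalently, the condition $\rho(t) = I$ or its general-position analogue); I would treat the full-twist region by the standard computation of the DGA of a full twist on $n$ parallel strands and fold its generators into the bookkeeping.

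With this matricial form of $\d_{\Lambda^{(n)}}$ in hand, the second step is nearly formal. Given a representation $\rho :\thinspace \A_\Lambda \to \Mat_n(\Z_2)$, define $\epsilon$ on each generator $a_{jk}$ of $\A_{\Lambda^{(n)}}$ to be the $(j,k)$ entry $\rho(a)_{jk} \in \Z_2$, and define $\epsilon$ on the full-twist generators by the values forced by the normalization (the entries of the identity matrix together with whatever auxiliary generators the twist contributes, augmented so as to be consistent). Then $\epsilon \circ \d_{\Lambda^{(n)}} = 0$ is literally the entrywise statement that $\rho \circ \d_\Lambda = 0$ together with $\rho(t) = I$ being an invertible matrix, plus the (automatically satisfied) relations coming from the twist. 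Grading compatibility is immediate: $a_{jk}$ has the same grading as $a$ (the push-offs are small), and $\epsilon$ kills generators of nonzero degree because $\rho$ does. Conversely, given an augmentation $\epsilon$ of $\A_{\Lambda^{(n)}}$, reverse the construction: set $\rho(a) := (\epsilon(a_{jk}))_{j,k} \in \Mat_n(\Z_2)$ and read off $\rho(t)$ from the full-twist block; the DGA relation $\epsilon \circ \d_{\Lambda^{(n)}} = 0$ then says exactly $\rho \circ \d_\Lambda = 0$, and one checks $\rho(t)$ is invertible (indeed conjugate to, or equal to, the identity after the change of generators), so $\rho$ is a genuine representation. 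The two constructions are visibly inverse to one another, giving the ``if and only if''.

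The main obstacle is the third step: making precise the claim that $\d_{\Lambda^{(n)}}$ really does have the advertised matricial form, including getting the contribution of the full positive twist exactly right. Two points need care. First, one must verify that \emph{every} holomorphic disk contributing to $\d_{\Lambda^{(n)}}$ is accounted for — disks confined near a single crossing-cluster lift from disks for $\Lambda$ as described, but one must rule out (or correctly incorporate) ``new'' disks that interact with the twist region or that use the thin strips between parallel copies; this is where the fully noncommutative DGA is essential, since imposing $[\rho(t),\rho(a)]=0$ would destroy the correspondence, as the paper already warns. Second, the full positive twist on $n$ strands has its own crossings and its own differential, and one must choose the change of generators (a tame isomorphism) that simultaneously (i) turns the twist block into the ``identity matrix'' data and (ii) is compatible with restricting to augmentations; this is a finite but slightly intricate linear-algebra-over-the-DGA computation. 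Once the matricial formula for $\d_{\Lambda^{(n)}}$ is nailed down, the equivalence of the two notions is a routine unwinding of definitions, so essentially all the work is in the careful analysis of the satellite's DGA near the clusters and the twist — which is the content of \cite{NR} and which I would invoke or reproduce at the level of detail needed.
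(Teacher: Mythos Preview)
The paper does not actually prove this theorem; it simply cites \cite{NR} and moves on. Your outline is essentially the strategy carried out in \cite{NR}: package the Reeb chords of the satellite $\Lambda^{(n)}$ lying over a chord $a$ of $\Lambda$ into an $n\times n$ matrix $\mathbf{a}$, show that $\d_{\Lambda^{(n)}}$ acts on these matrices by the same word as $\d_\Lambda$ acts on the underlying generators, and then observe that the entrywise conditions $\epsilon\circ\d_{\Lambda^{(n)}}=0$ are exactly the matrix equations $\rho\circ\d_\Lambda=0$. So at the level of approach you are on target.

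One point to tighten. You write that the full twist encodes the condition ``$\rho(t)=I$'' and that the twist generators are augmented to ``the entries of the identity matrix.'' This is not quite how the bookkeeping works in \cite{NR}: the full positive twist on $n$ strands, together with the $n$ base-point variables $t_1,\ldots,t_n$ on the components of $\Lambda^{(n)}$, assembles into an invertible matrix that plays the role of $\rho(t)\in GL_n(\Z_2)$, not the identity. An arbitrary representation need not send $t$ to $I$, and correspondingly the augmentation values on the twist crossings (and on the $t_i$) recover an arbitrary invertible matrix, not just $I$. Your hedge ``or its general-position analogue'' suggests you suspect this, but as written the dictionary you describe would only produce representations with $\rho(t)=I$ in one direction, and would not know what to do with an augmentation whose twist block is not the identity in the other. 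Once you replace ``identity matrix'' with ``the invertible matrix $\rho(t)$ read off from the twist block and the $t_i$,'' your steps 2--4 go through as stated; the genuinely nontrivial content, as you correctly flag in your third step, is the holomorphic-disk analysis establishing the matricial form of $\d_{\Lambda^{(n)}}$, which is the heart of \cite{NR}.
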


\noindent
Since the existence of an augmentation is equivalent to the existence of a ruling by Theorem~\ref{thm:ruling-aug}, one can reprove Theorem~\ref{thm:2drep} by exhibiting an ungraded ruling of $\Lambda^{(2)}$ where $\Lambda$ is the knot in Figure~\ref{fig:torusknot}; see \cite{NR} for an illustration of such a ruling.
We also remark that one can count the number of representations of the DGA of a Legendrian knot $\Lambda$ over a finite field, and this is related to rulings of satellites of $\Lambda$ (generalizing $\Lambda^{(n)}$) through colored HOMFLY-PT polynomials; see \cite{LR}.

\section{Augmentation Categories}
\label{sec:augcat}

In this section, we describe how the collection of augmentations of the DGA of a Legendrian knot can be assembled into the algebraic structure of an $A_\infty$ category, called the augmentation category. (There are in fact two categories $\Aug_-$ and $\Aug_+$, which we describe in turn.) The morphisms in the category are a generalization of the linearized contact homology and $A_\infty$-algebra operations discussed in Section~\ref{sec:linear}, and the category itself is meant to model a Fukaya category whose objects are exact fillings (see Section~\ref{sec:fillings} below). One benefit of this categorical formulation is that it yields a natural algebraic notion of equivalence for augmentations, generalizing the geometric notion of isotopy of fillings. The augmentation category also has an intriguing relation to sheaf theory; a full description lies outside the scope of this article, but we give a brief discussion at the end of this section.

\subsection{Two $A_\infty$ categories}
\label{ssec:augcat}

The $A_\infty$ algebra described in Section~\ref{ssec:a-infty} is associated to a choice of augmentation of the DGA $(\A_\Lambda,\d_\Lambda)$. One can generalize this to incorporate multiple augmentations of $(\A_\Lambda,\d_\Lambda)$, as was first observed in this context by Bourgeois and Chantraine \cite{BC}. To see this, consider a term in $\d_\Lambda a$ of the form $\alpha\mathbf{a}_0 a_{i_1} \mathbf{a}_1 a_{i_2} \mathbf{a}_2 \cdots \mathbf{a}_{n-1} a_{i_n} \mathbf{a}_n$ as in Remark~\ref{rmk:a-infty} above. If we now have not $1$ but $n+1$ augmentations $\e_0,\ldots,\e_{n+1}$, then we can replace $\mathbf{a}_0,\ldots,\mathbf{a}_n$ successively by $\e_0(\mathbf{a}_0),\ldots,\e_n(\mathbf{a}_n)$, and dualizing now gives
\begin{equation}
m_n(a_{i_n}^\vee,\ldots,a_{i_1}^\vee) = \alpha \e_0(\mathbf{a}_0) \cdots \e_n(\mathbf{a}_n) a^\vee + \cdots.
\label{eq:bc}
\end{equation}
These new $m_n$ operations depend on the choice of augmentations $\e_0,\ldots,\e_n$. Where the $m_n$ formed an $A_\infty$ algebra when all of the $\e_i$ were equal, they now form the crucial ingredients to an $A_\infty$ category.

\begin{definition}
An \textit{$A_\infty$ category} $\CC$ consists of: a set of objects $\Ob \CC$; a graded $\kk$-vector space $\Hom(\e_1,\e_2)$ for any objects $\e_1,\e_2\in\Ob\CC$; and, for $n \geq 1$ and any objects $\e_0,\ldots,\e_n\in\Ob\CC$, a map
\[
m_n :\thinspace \Hom(\e_{n-1},\e_n) \otimes \cdots \otimes \Hom(\e_1,\e_2) \otimes \Hom(\e_0,\e_1) \to \Hom(\e_0,\e_n)
\]
of degree $1-n$, such that the $A_\infty$ relations
\[
\sum_{r+s+t=n} \pm m_{r+1+t}(1^{\otimes r} \otimes m_s \otimes 1^{\otimes t}) = 0
\]
hold for $n \geq 1$.
\end{definition}

We now have the following result, whose proof (omitted here) is a formal algebraic consequence of the fact that $\d_\Lambda^2=0$.

\begin{theorem}[Bourgeois--Chantraine \cite{BC}]
Given a Legendrian knot $\Lambda \subset \R^3$ and a field $\kk$, there is an $A_\infty$ category $\Aug_-(\Lambda,\kk)$ such that:
\begin{itemize}
\item
$\Ob \Aug_-(\Lambda,\kk)$ is the set of augmentations $(\A_\Lambda,\d_\Lambda) \to (\kk,0)$;
\item
for any $\e_1,\e_2$, $\Hom(\e_1,\e_2) = \kk\langle\RR\rangle^\vee$, the dual to the $\kk$-vector space generated by the set $\RR$ of Reeb chords of $\Lambda$;
\item
the $m_n$ operations for $n\geq 1$ are given by Equation~\eqref{eq:bc}.
\end{itemize}
\end{theorem}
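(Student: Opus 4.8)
The plan is to treat this as a purely formal consequence of the identity $\d_\Lambda^2=0$ together with the Leibniz rule, following Bourgeois and Chantraine~\cite{BC}: the objects and the morphism spaces $\Hom(\e_1,\e_2)=\kk\langle\RR\rangle^\vee$ are specified outright, so the only content is that the operations $m_n$ of Equation~\eqref{eq:bc} are well defined, have the degree demanded by Definition~\ref{def:Ainfty}, and satisfy the $A_\infty$ relations $\sum_{r+s+t=n}\pm m_{r+1+t}(1^{\otimes r}\otimes m_s\otimes 1^{\otimes t})=0$. Well-definedness and finiteness of $m_n$ are immediate from the fact that $\d_\Lambda a$ is a finite sum (Remark~\ref{rmk:filtration}), and the degree statement follows from the identity $|a|-\sum_j|a_{i_j}|=1$, valid for every monomial of $\d_\Lambda a$ that survives once the interspersed blocks are evaluated under the augmentations (these are supported in degree $0$), together with the grading shift $|a^\vee|=|a|+1$.

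The substantive step is the $A_\infty$ relations, and the key device is to recast Equation~\eqref{eq:bc} as the transpose of a component of $\d_\Lambda$. Write $V=\kk\langle\RR\rangle$. For an ordered tuple $\vec{\e}=(\e_0,\dots,\e_n)$ of augmentations, let $\mu_n^{\vec{\e}}:V\to V^{\otimes n}$ be the linear map whose $a_{i_1}\otimes\cdots\otimes a_{i_n}$-coefficient in $\mu_n^{\vec{\e}}(a)$ is obtained by summing, over all monomials $\alpha\,\mathbf{a}_0 a_{i_1}\mathbf{a}_1\cdots a_{i_n}\mathbf{a}_n$ of $\d_\Lambda a$ in which $a_{i_1},\dots,a_{i_n}$ occur in this order, the scalar $\alpha\,\e_0(\mathbf{a}_0)\cdots\e_n(\mathbf{a}_n)$ produced by substituting $\e_j$ (and $\e_j(t)^{\pm 1}\in\kk^\times$) into the $j$-th block $\mathbf{a}_j$. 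Up to the reversal of inputs and the Koszul sign conventions fixed in~\cite{NRSSZ}, the operation $m_n$ on $\Hom(\e_{n-1},\e_n)\otimes\cdots\otimes\Hom(\e_0,\e_1)$ is exactly $(\mu_n^{\vec{\e}})^\vee$, so it suffices to prove the $A_\infty$ relations for the collection $\{\mu_n^{\vec{\e}}\}$.

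Now one expands $0=\d_\Lambda(\d_\Lambda a)$ by the Leibniz rule and sorts the resulting monomials, relative to a chosen output profile $i_1<\cdots<i_n$ and tuple $\vec{\e}$, into two families. In the first, the second $\d_\Lambda$ resolves one of the designated output letters into a sub-word; after the blocks are evaluated these monomials assemble precisely into the composites $\mu_{r+1+t}^{\vec{\e}}(1^{\otimes r}\otimes\mu_s^{\vec{\e}}\otimes 1^{\otimes t})$, for every split $r+s+t=n$ and every consistent way of distributing the intermediate augmentations along the word. In the second, the second $\d_\Lambda$ lands on a letter inside some block $\mathbf{a}_j$; but that block is then evaluated under $\e_j$, so that family vanishes identically because $\e_j\circ\d_\Lambda=0$. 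Dualizing the surviving identity gives $\sum_{r+s+t=n}\pm m_{r+1+t}(1^{\otimes r}\otimes m_s\otimes 1^{\otimes t})=0$ for each choice of endpoints and intermediate augmentations. Equivalently, and more conceptually, one packages the $\mu_n^{\vec{\e}}$ into a single coderivation on the tensor coalgebra $\bigoplus\Hom(\e_{k-1},\e_k)\otimes\cdots\otimes\Hom(\e_0,\e_1)$ (the bar construction of the putative category); the identity $\d_\Lambda^2=0$ says exactly that this coderivation squares to zero, which by the bar/cobar dictionary is the $A_\infty$ axiom for $(m_n)$.

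The main obstacle is the combinatorial bookkeeping. One must check that when $\d_\Lambda$ resolves an output into a sub-word, the blocks of that sub-word are evaluated under exactly the augmentations that make the result a genuine nested composite $m_{r+1+t}(1^{\otimes r}\otimes m_s\otimes 1^{\otimes t})$ with composable sources and targets, and, simultaneously, that the signs — the Koszul signs for moving odd letters past one another, the Leibniz sign, and the input-reversal built into~\eqref{eq:bc} — line up with the signs in the $A_\infty$ relations of Definition~\ref{def:Ainfty}. I would fix one global convention, the one in~\cite{NRSSZ}, and verify compatibility explicitly for $m_1$, $m_2$, and $m_3$ before invoking the general coalgebra argument. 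A secondary point needing care is the base point: since $t$ does not commute with Reeb chords in $\A_\Lambda$, the power of $t$ accumulated inside a block $\mathbf{a}_j$ must be evaluated under $\e_j$ rather than under $\e_0$, and one uses here that $\e_j(t)\in\kk^\times$ so that no information is lost — this is where the fully noncommutative form of the DGA enters, and it is already built into the definition of $\mu_n^{\vec{\e}}$ above.
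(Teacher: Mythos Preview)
Your proposal is correct and is exactly in the spirit of what the paper indicates: the paper omits the proof entirely, stating only that it ``is a formal algebraic consequence of the fact that $\d_\Lambda^2=0$,'' and your argument is a careful unpacking of precisely that formal consequence. Your identification of the two families of terms in the expansion of $\d_\Lambda(\d_\Lambda a)$---one yielding the nested composites and the other vanishing via $\e_j\circ\d_\Lambda=0$---together with the bar-construction repackaging, is the standard way to carry this out and matches the treatment in \cite{BC,NRSSZ} to which the paper defers.
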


The $A_\infty$ category $\Aug_-(\Lambda,\kk)$ is one of two $A_\infty$ categories that can be constructed from augmentations. To set up the other category $\Aug_+$, we first reformulate the definition of $\Aug_-$, following \cite{BC}. For $n \geq 1$, the \textit{$n$-copy} $\Lambda_{(n)}$ of a Legendrian knot $\Lambda$ is the $n$-component Legendrian link given by $\Lambda$ along with $n-1$ additional copies, perturbed to be distinct from $\Lambda$ and each other by small translations in the Reeb, that is, the $z$ direction. For now we number these copies $\Lambda_1,\ldots,\Lambda_n$ from bottom to top, so that $\Lambda_k$ is the result of translating $\Lambda$ by $(k-1)\epsilon$ in the $z$ direction for $\epsilon \ll 1$. The $xy$ projection of $\Lambda_{(n)}=\Lambda_1 \cup \cdots \cup \Lambda_n$ consists of $n$ overlapping projections of $\Lambda$; to make this generic, we perturb the $xy$ projections of the components so that they intersect transversely. To do this, we choose a positively-valued Morse function $f$ on $\Lambda$, identify a tubular neighborhood of $\Lambda$ with the $1$-jet space $J^1\Lambda$, and choose $\Lambda_k$ to correspond to the $1$-jet of the function $(k-1)\epsilon f$ in $J^1\Lambda$. The result in the $xy$ projection is $n$ parallel copies of $\Lambda$ that all intersect at each critical point of $f$. We then further perturb these collections of $n\choose 2$ intersections to make them distinct from each other.  See Figure~\ref{fig:3copy} for an illustration of a $3$-copy, and \cite{BC,NRSSZ} for more details.

\begin{figure}
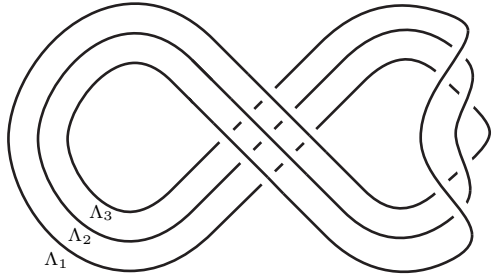

{\tiny
\begin{overpic}{figures/3copy}
\put(31, 21){$\Lambda_3$}
\put(23, 12){$\Lambda_2$}
\put(14, 3){$\Lambda_1$}
\end{overpic}}
\caption{
A $3$-copy of the Legendrian unknot, in the $xy$ projection. The perturbing Morse function on the unknot that yields this $3$-copy has two critical points, a maximum in the upper right of the figure eight and a minimum in the lower right. This yields two triple points in the $xy$ projection, which have further been perturbed to give two sets of $3$ crossings as shown.
}
\label{fig:3copy}
\end{figure}

Let $\RR$ and $\RR_{(n)}$ denote the set of Reeb chords of $\Lambda$ and $\Lambda_{(n)}$, respectively. Following Mishachev \cite{Mis}, we can partition $\RR_{(n)}$ into $n^2$ subsets $\RR^{ij}$, $1\leq i,j \leq n$, where $\RR^{ij}$ consists of Reeb chords that begin on $\Lambda_j$ and end on $\Lambda_i$. From the description of the $xy$ projection of $\Lambda_{(n)}$ above, we see that Reeb chords in $\RR$ fall into two types: for each crossing in $\pi_{xy}(\Lambda)$, there are $n^2$ crossings in $\pi_{xy}(\Lambda_{(n)})$, one in each $\RR^{ij}$; and for each critical point of $f$, there are $n\choose 2$ crossings in $\pi_{xy}(\Lambda_{(n)})$, one in each $\RR^{ij}$ for $i>j$. It follows that there is a one-to-one correspondence between $\RR^{ij}$ and $\RR$ for $i\leq j$, and between $\RR^{ij}$ and $\RR$ together with critical points of $f$, for $i>j$.

Now let $\d_{(n)}$ denote the LCH differential for $\Lambda_{(n)}$. For $a \in \RR^{ij}$, every term in $\d_{(n)}$ is \textit{composable}: disregarding homology coefficients, it is of the form
$a_1a_2\cdots a_k$, where $a_1\in\RR^{i i_1}$, $a_2 \in \RR^{i_1 i_2}$, $a_3 \in \RR^{i_2 i_3}$, $\ldots$, $a_k \in \RR^{i_{k-1}j}$ for some $i_1,\ldots,i_{k-1}\in\{1,\ldots,n\}$. This comes from considering the piecewise-smooth boundary of the relevant holomorphic disk, and in particular which components of the $n$-copy it lies in. The same remains true if we twist the differential by a \textit{pure} augmentation of $\Lambda_{(n)}$, defined to be an augmentation that sends all generators in $\RR^{ij}$ to $0$ for $i \neq j$.

We can now state an alternate definition for $\Aug_-(\Lambda,\kk)$. As before, the objects of $\Aug_-$ are augmentations $(\A_\Lambda,\d_\Lambda) \to (\kk,0)$, and the morphisms $\Hom(\e_1,\e_2)$ are the $\kk$-vector space generated by (duals of) Reeb chords of $\Lambda$, which we can now identify with the dual of the vector space $\kk\langle \RR^{ij} \rangle$ generated by $\RR^{ij}$ for any $i\leq j$. Let $\e_0,\ldots,\e_n$ be augmentations in $\Ob \Aug_-$. Then we can define a pure augmentation $\e = (\e_0,\ldots,\e_n)$ of $\Lambda^{(n+1)}$ by $\e(a) = \e_j(a)$ if $a \in \RR^{jj}$ and $\e(a) = 0$ if $a\in \RR^{ij}$ for $i\neq j$. The twisted differential $\d_{(n+1)}^\e$ consists of composable terms, and we can dualize it to obtain a map
\begin{equation}
m_n :\thinspace (\kk\langle \RR^{n-1,n}\rangle)^\vee \otimes \cdots \otimes (\kk\langle\RR^{12}\rangle)^\vee \otimes (\kk\langle\RR^{01}\rangle)^\vee \to 
(\kk\langle\RR^{0n}\rangle)^\vee.
\label{eq:ncopy}
\end{equation}
More precisely, a degree $n$ term $a_{j_1}\cdots a_{j_n}$ in $\d^{(n)}(a)$ for $a \in \RR^{0n}$ and $a_{j_k} \in \RR^{k-1,k}$ for $1\leq k\leq n$ dualizes to a term $a^\vee$ in $m_n(a_{j_n}^\vee,\ldots,a_{j_1}^\vee)$.

To see that this definition of $m_n$ agrees with the previous definition in Equation~\eqref{eq:bc}, the idea is to look at a holomorphic disk contributing to $\d_{(n)}$ and $m_n$, and note that in the limit that all copies of $\Lambda_{(n)}$ approach $\Lambda$, this disk approaches a disk for the original differential $\d_\Lambda$. We leave the details to the reader (or see \cite{BC,NRSSZ}).

With a minor change, this formulation of $\Aug_-$ in terms of $n$-copies allows us to define another $A_\infty$ category $\Aug_+$ that has some nicer formal properties than $\Aug_-$. The change is simply reversing the order of the components in the $n$-copy $\Lambda_{(n)}$, so that $\Lambda_1$ is on top in the $z$ direction and $\Lambda_n$ is on bottom. The $m_n$ maps are then defined as before. However, note that the sets $\RR^{ij}$ appearing in the definition of the $m_n$ maps in Equation~\eqref{eq:ncopy} are no longer in one-to-one correspondence with Reeb chords of $\Lambda$, and now have additional elements corresponding to critical points of the Morse function $f$. If we choose $f$ on the knot $\Lambda$ to have a single maximum $x$ and a single minimum $y$, then we can identify $\RR^{ij}$ with $\RR \cup \{x,y\}$. We now have the following.

\begin{theorem}[\cite{NRSSZ}]
Given a Legendrian knot $\Lambda \subset \R^3$ and a field $\kk$, there is an $A_\infty$ category $\Aug_+(\Lambda,\kk)$ such that:
\begin{itemize}
\item
$\Ob \Aug_+(\Lambda,\kk)$ is the set of augmentations $(\A_\Lambda,\d_\Lambda) \to (\kk,0)$;
\item
for any $\e_1,\e_2$, $\Hom(\e_1,\e_2) = \kk\langle\RR\cup\{x,y\}\rangle^\vee$;
\item
the $m_n$ operations for $n\geq 1$ are given as in Equation~\eqref{eq:ncopy}.
\end{itemize}
\end{theorem}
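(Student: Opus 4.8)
The plan is to define $\Aug_+(\Lambda,\kk)$ explicitly from the DGAs of the reversed $(n+1)$-copies of $\Lambda$, mirroring the alternate description of $\Aug_-$ recalled above, and then to obtain the $A_\infty$ relations as a purely formal consequence of $\d_{(n+1)}^2=0$, exactly as the $A_\infty$-algebra relations were derived from $(\d^\e)^2=0$ in Section~\ref{ssec:a-infty}.

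First I would fix a Morse function $f$ on $\Lambda$ with a single maximum $x$ and a single minimum $y$, and for each $N\geq 1$ use it to build the $N$-copy $\Lambda_{(N)}$ ordered with $\Lambda_1$ on top and $\Lambda_N$ on the bottom in the $z$-direction, perturbing the $xy$-projections as described before the theorem. The main work is to understand the DGA $(\A_{\Lambda_{(N)}},\d_{(N)})$ well enough: one needs that its Reeb chords split into blocks $\RR^{ij}$ with $\RR^{ij}\cong\RR$ for $i>j$ and $\RR^{ij}\cong\RR\cup\{x,y\}$ for $i\leq j$, that every monomial in $\d_{(N)}(a)$ for $a\in\RR^{ij}$ is composable in the sense defined above, and --- crucially, so that the construction closes up --- that twisting by a \emph{pure} augmentation preserves these features. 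I would establish this through the degeneration picture: as the copies collapse onto $\Lambda$, a disk for $\Lambda_{(N)}$ limits to a disk for $\Lambda$ with possible thin gradient ``Morse strips'' of $f$ attached near $x$ and $y$, and recording which component of $\Lambda_{(N)}$ each boundary arc lies on forces composability and pins down the block identifications. Having a sufficiently explicit model --- the combinatorial ``dipped'' front of \cite{NRSSZ}, or an analytic gluing argument --- is what makes this rigorous, and this is the step I expect to be the main obstacle: one must verify that no non-composable terms appear, correctly identify the extra generators $x,y$ in the off-diagonal blocks, and track how they interact with $\d_\Lambda$ near the ends of Reeb chords.

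With that in hand the category is defined as follows. Objects are augmentations $\e\colon(\A_\Lambda,\d_\Lambda)\to(\kk,0)$; for any $\e_1,\e_2$ set $\Hom(\e_1,\e_2)=\kk\langle\RR\cup\{x,y\}\rangle^\vee$, identified with the dual of $\kk\langle\RR^{ij}\rangle$ for $i\leq j$. Given $\e_0,\ldots,\e_n\in\Ob\Aug_+$, form the pure augmentation $\e=(\e_0,\ldots,\e_n)$ of $\Lambda_{(n+1)}$ by $\e|_{\RR^{jj}}=\e_j$ and $\e|_{\RR^{ij}}=0$ for $i\neq j$ (this is a chain map because each $\e_j$ is and because diagonal blocks do not mix inside composable monomials), twist to get $\d_{(n+1)}^\e$, and let $m_n$ be the dual of the component of $\d_{(n+1)}^\e$ that starts at a chord of $\RR^{0n}$ and outputs a composable word with one factor in each of $\RR^{01},\ldots,\RR^{n-1,n}$, as in Equation~\eqref{eq:ncopy}. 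The grading shift $|a^\vee|=|a|+1$ forces $m_n$ to have degree $1-n$, just as in the algebra case.

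Finally I would verify the $A_\infty$ relations. Since $\d_{(n+1)}^\e$ is a differential on the twisted algebra, $(\d_{(n+1)}^\e)^2=0$; expanding via the signed Leibniz rule and extracting the piece that begins at a chord of $\RR^{0n}$ and outputs a composable word with one factor in each consecutive block, then dualizing, yields precisely $\sum_{r+s+t=n}\pm m_{r+1+t}(1^{\otimes r}\otimes m_s\otimes 1^{\otimes t})=0$, with signs matching whichever coherent-orientation (or combinatorial) sign convention was fixed for the DGA; here I would simply cite the explicit signs worked out in \cite{NRSSZ}. This computation is formally identical to the $A_\infty$-algebra argument of Section~\ref{ssec:a-infty}, now allowing the augmentation to vary from block to block. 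As a closing remark I would note that, with $f$ chosen as above, $y^\vee\in\Hom(\e,\e)$ is a cohomological unit --- and can be promoted to a strict unit after a more careful choice of perturbation model --- which is the formal advantage of $\Aug_+$ over $\Aug_-$ alluded to in the text; establishing strict unitality is in fact where most of the genuine effort in \cite{NRSSZ} goes.
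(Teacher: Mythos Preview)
Your proposal is correct and follows essentially the same approach as the paper, which sketches the construction of $\Aug_+$ via the reversed $(n+1)$-copy and cites \cite{NRSSZ} for details; in particular, both here and in the paper the $A_\infty$ relations arise formally from $(\d_{(n+1)}^\e)^2=0$ together with composability, just as in Section~\ref{ssec:a-infty}. One small slip: the diagonal blocks satisfy $\RR^{ii}\cong\RR$, so the identification $\RR^{ij}\cong\RR\cup\{x,y\}$ holds only for $i<j$ (strict inequality), and correspondingly your identification of $\Hom(\e_1,\e_2)$ with $\kk\langle\RR^{ij}\rangle^\vee$ should use $i<j$.
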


\begin{remark}
Note that whereas the definition of $\Aug_-(\Lambda,\kk)$ by Bourgeois and Chantraine (as we described at the beginning of this section) uses the DGA of $\Lambda$, the construction of $\Aug_+(\Lambda,\kk)$ we have just described involves the DGAs of not just $\Lambda$ but also its $n$-copies. However, it is also possible to deduce the $A_\infty$ category $\Aug_+(\Lambda,\kk)$ solely from the DGA of $\Lambda$ itself; see \cite[\S 4]{NRSSZ} for details. 

In higher dimensions, for Legendrian submanifolds $\Lambda$ with $\dim \Lambda >1$, one can again use \cite{BC} to construct an $A_\infty$ category $\Aug_-(\Lambda,\kk)$ from the DGA of $\Lambda$. It is expected that one can also construct the analogue of $\Aug_+(\Lambda,\kk)$ in this setting, but we caution that this likely involves more data than the DGA of $\Lambda$, in contrast to the case of $1$-dimensional Legendrian knots that we have discussed here.
\end{remark}

\subsection{Properties of $\Aug_+$}
\label{ssec:aug-properties}

There is a subtle but important distinction between $\Aug_-$ and $\Aug_+$, and it has to do with the presence of $y^\vee$ in $\Aug_+$. In the $n$-copy $\Lambda_{(n)}$, it can be checked that the only holomorphic disks with a negative corner at one of the crossings corresponding to $y$ are triangles that are ``thin'' in the sense that they lie entirely in a neighborhood of $\Lambda$. It follows from this that in $\Aug_+$, $m_2(a^\vee,y^\vee)$ and $m_2(y^\vee,a^\vee)$ are (up to sign) both equal to $a^\vee$ for any $a \in \RR\cup\{x,y\}$. More precisely:

\begin{definition}
An $A_\infty$ category $\CC$ is \textit{strictly unital} if for all $\e\in\Ob\CC$, there is a morphism $e_\e \in \Hom(\e,\e)$ such that: $m_1(e_\e) = 0$; any $m_n$ for $n\geq 3$ involving $e_\e$ is $0$; and for any $\e_1,\e_2$ and any $a\in\Hom(\e_1,\e_2)$,
\[
m_2(a,e_{\e_1}) = m_2(e_{\e_2},a) = a.
\]
\end{definition}

\begin{theorem}[\cite{NRSSZ}]
$\Aug_+(\Lambda,\kk)$ is strictly unital.
\end{theorem}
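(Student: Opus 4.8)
The plan is to exhibit an explicit strict unit. For an object $\e$ of $\Aug_+(\Lambda,\kk)$, let $y$ be the generator of $\RR\cup\{x,y\}$ coming from the minimum of the perturbing Morse function $f$ (which we have normalized to have a single maximum $x$ and a single minimum $y$), and take $e_\e := y^\vee \in \Hom(\e,\e)$; one checks from the local Morse model near $y$ that $y^\vee$ has the degree required of a unit. The entire verification rests on the geometric input already flagged in the paragraph preceding the theorem, which I will call the \emph{thin-disk description}: in any $N$-copy $\Lambda_{(N)}$, every holomorphic disk having a negative corner at a crossing coming from $y$ is thin, i.e.\ contained in an arbitrarily small tubular neighborhood of $\Lambda$ and degenerating onto $\Lambda$ as the copies coalesce. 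Such a disk is of one of two types: (i) one of exactly two thin bigons running from the $x$-crossing to the $y$-crossing between a single pair of copies, the two instances carrying opposite coherent-orientation signs; or (ii) a thin triangle involving exactly three copies at levels $i<j<k$, with positive corner at a copy $b^{ik}$ of some Reeb chord of $\Lambda$, one negative corner at the corresponding copy $b^{jk}$ or $b^{ij}$, and the other at $y^{ij}$ or $y^{jk}$ --- with no Reeb chords of $\Lambda$ and (after placing base points away from $x$ and $y$) no base points along the boundary, so that the disk has coefficient $\pm1$.

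I would prove the thin-disk description by the standard tubular-neighborhood and energy argument, and this is the only step I expect to be nontrivial. Identify a neighborhood of $\Lambda$ with $J^1\Lambda$ in which the copies are the $1$-jets of $0,\e f, 2\e f,\dots$; then Stokes' Theorem as in Remark~\ref{rmk:filtration} bounds the symplectic area of a disk with a negative corner at a $y$-crossing by the difference of heights of its corners, which tends to $0$ as $\e\to0$ since the heights of the remaining corners converge to the common height of an underlying chord of $\Lambda$ while the height of a $y$-crossing tends to $0$. A monotonicity estimate then confines the disk to the neighborhood, where it is governed by the Morse theory of $f$ on $S^1$ and must be one of the elementary configurations (i)--(ii); this is carried out in \cite{BC,NRSSZ}.

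Granting the thin-disk description, strict unitality follows formally by counting corners. Any disk contributing to $m_n(\dots,e_\e,\dots)$ has a negative corner at a $y$-crossing, hence is thin. A type-(i) bigon has only two corners and so can only contribute to $m_1$, where the two cancelling instances give $m_1(e_\e)=0$. A type-(ii) triangle involves exactly three copies and so has exactly three corners, hence contributes only to $m_2$; therefore $m_n(\dots,e_\e,\dots)=0$ for all $n\geq3$. For $n=2$, the type-(ii) triangles give, for each generator $a$, a single contribution to each of $m_2(a^\vee,e_{\e_1})$ and $m_2(e_{\e_2},a^\vee)$ --- namely the thin triangle determined by $a$ --- so each equals $\pm a^\vee$; it remains only to check, using the coherent-orientation conventions of \cite{NRSSZ}, that these signs are $+1$. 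This last point is the one piece of genuine sign bookkeeping, and it is also what singles out $y^\vee$ (with the chosen base-point placement), rather than a scalar multiple of it, as the correct unit. With the sign confirmed we obtain $m_2(a^\vee,e_{\e_1})=m_2(e_{\e_2},a^\vee)=a^\vee$ for every $a$, which together with $m_1(e_\e)=0$ and the vanishing of all higher $m_n$ involving $e_\e$ shows that $\Aug_+(\Lambda,\kk)$ is strictly unital.
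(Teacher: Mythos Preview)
Your proposal is correct and follows essentially the same line as the paper: the paper does not give a full proof but, in the paragraph immediately preceding the theorem, identifies $y^\vee$ as the unit and asserts the same thin-disk description (disks with a negative corner at a $y$-crossing lie in a neighborhood of $\Lambda$), from which strict unitality is read off exactly as you do; the full argument is deferred to \cite{NRSSZ}. Your write-up is in fact slightly more careful than the paper's sketch, in that you separate out the type-(i) bigons responsible for $m_1(e_\e)=0$ from the type-(ii) triangles giving the $m_2$ identities, and you correctly flag the sign verification as the one point requiring the specific conventions of \cite{NRSSZ}.
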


The unitality of $\Aug_+$ allows us to construct a ``usual'' category, the \textit{cohomology category} $H^*\Aug_+$, from $\Aug_+$. The objects of $H^*\Aug_+$ are the same as the objects of $\Aug_+$, and the morphisms are $\Hom_{H^*\Aug_+}(\e_1,\e_2) = H^*(\Hom_{\Aug_+}(\e_1,\e_2),m_1)$. In $H^*\Aug_+$, composition is the map induced from $m_2$ (by the $A_\infty$ relations, this is associative) and $[e_\e]$ serves as the identity morphism from $\e$ to itself. 

This leads to a notion of equivalence for augmentations: two augmentations $\e_1,\e_2$ are \textit{isomorphic} in $\Aug_+$ if there are morphisms $a \in H^*\Hom(\e_1,\e_2)$ and $a' \in H^*\Hom(\e_2,\e_1)$ such that $a' \circ a = [e_{\e_1}] \in H^*\Hom(\e_1,\e_1)$ and $a\circ a' = [e_{\e_2}] \in H^*\Hom(\e_2,\e_2)$, where $\Hom = \Hom_{\Aug_+}$. In this setting, isomorphism of augmentations actually coincides with the notion of DGA chain homotopy; see \cite[Prop.~5.17]{NRSSZ}.

\begin{remark}
If $\kk$ is a finite field, then the number of isomorphism classes of augmentations to $\kk$ is a Legendrian-isotopy invariant of $\Lambda$. This for instance gives another way to distinguish the Chekanov $m(5_2)$ knots: $\Lambda_1$ from Figure~\ref{fig:CHex} has $1$ isomorphism class of augmentations to $\Z_2$, while $\Lambda_2$ has $3$. For more on counting augmentations, see for instance \cite{NgSab,HenryRutherford,NRSS}.
\end{remark}

Although augmentations and the augmentation category $\Aug_+$ are algebraic in nature, we will see in Section~\ref{sec:fillings} below that they can be modeled on a geometric source, exact Lagrangian fillings. Fillings of $\Lambda$ yield augmentations, and isotopic fillings induce isomorphic augmentations; see Theorem~\ref{thm:ehk} below. The entire augmentation category $\Aug_+(\Lambda,\kk)$ can be loosely viewed as an algebraic manifestation of an ``infinitesimally wrapped'' Fukaya category associated to $\Lambda$, whose objects are exact Lagrangian fillings of $\Lambda$ and whose morphisms are given by Floer homology groups $HF^*$ for a suitable perturbation of the Lagrangians. We refer the interested reader to \cite{NRSSZ} for further discussion of this viewpoint.

We close this section by mentioning a surprising connection between $\Aug_+$ and sheaf theory. Using techniques from algebraic geometry and inspired by work on microlocalization by Nadler--Zaslow \cite{NZ} and Guillermou--Kashiwara--Schapira, Shende, Treumann, and Zaslow \cite{STZ} defined $A_\infty$ categories $\Sh_n(\Lambda,\kk)$ associated to Legendrian knots $\Lambda$ in $\R^3$ or $ST^*\R^2$. The objects of $\Sh_n(\Lambda,\kk)$ are rank $n$ microlocal sheaves on $\R^2$ with microsupport on $\Lambda$, and the morphisms are given by $\Ext$ groups; see \cite{STZ} for the full definition. It was (essentially) conjectured in \cite{STZ}, and subsequently proven in \cite{NRSSZ}, that the augmentation and sheaf categories are equivalent for Legendrian $\Lambda$ in $\R^3$:
\[
\Aug_+(\Lambda,\kk) \cong \Sh_1(\Lambda,\kk).
\]

It is natural to ask what the augmentation analogue of $\Sh_n$ is for $n>1$. In fact, for any $n \geq 1$ one can assemble the set of $n$-dimensional representations of the DGA of $\Lambda$, as discussed in Section~\ref{ssec:rep}, into the objects of an $A_\infty$ category $\Rep_n(\Lambda,\kk)$; for $n=1$, we have $\Rep_1(\Lambda,\kk) = \Aug_+(\Lambda,\kk)$. It is conjectured that $\Rep_n(\Lambda,\kk) \cong \Sh_n(\Lambda,\kk)$ for all $n$. See \cite{CNS-rep} for the definition of $\Rep_n$ and some evidence for this conjecture.

\section{Fillings and Augmentations}\label{sec:fillings}
In this section we consider Lagrangian cobordisms between Legendrian knots and discuss how they induce maps on the Chekanov--Eliashberg DGA of the Legendrian knots. We then see how these maps can be used to obstruct cobordism and Lagrangian fillings of Legendrian knots. In particular, we will discuss connections between Lagrangian fillings of Legendrian knots and augmentations of the Chekanov--Eliashberg DGA. 

\subsection{Cobordisms and functoriality}

One nice feature of LCH, as predicted by the framework of Symplectic Field Theory \cite{EGH}, is that it is functorial in a particular way. To state this precisely, we need the notion of an exact Lagrangian cobordism between two Legendrian knots or links: see Figure~\ref{fig:cobordism} for an illustration.

\begin{definition}
Let $\Lambda_+,\Lambda_-$ be Legendrian links in $\R^3$. A \textit{Lagrangian cobordism from $\Lambda_-$ to $\Lambda_+$} is a Lagrangian submanifold $L$ of the symplectization $(\R \times \R^3,d(e^t\alpha))$ such that for some $T>0$, $L \cap ((-\infty,-T) \times \R^3) = (-\infty,-T) \times \Lambda_-$ and $L \cap ((T,\infty) \times \R^3) = (T,\infty) \times \Lambda_+$. 
\label{def:cobordism}
A Lagrangian cobordism $L$ is \textit{exact} if there is a function $f:\thinspace L\to\R$ such that $(e^t \alpha)|_L = df$ and $f$ is constant on each individual end, $(-\infty,-T) \times \Lambda_-$ and $(T,\infty) \times \Lambda_+$.
\end{definition}

\begin{figure}
{\tiny
\begin{overpic}{figures/cobordism}
\put(86, 115){$\Lambda_+$}
\put(275, 112){$\Lambda_+$}
\put(70, 30){$\Lambda_-$}
\put(80, 85){$L$}
\put(262, 40){$L$}
\put(115, 80){$\R\times \R^3$}
\put(240, 80){$\Delta$}
\put(265, 155){$a$}
\end{overpic}}
\caption{
On the left, a Lagrangian cobordism from $\Lambda_-$ to $\Lambda_+$. On the right, a disk contributing to the augmentation $\epsilon_L$ induced by a filling $L$ of $\Lambda_+$.}
\label{fig:cobordism}
\end{figure}

\begin{remark}
The notion of (exact) Lagrangian cobordism can be generalized by replacing $\R^3$ by an arbitrary contact $3$-manifold $(Y,\alpha)$, and further by replacing the symplectization $\R\times Y$ by an exact symplectic cobordism from $(Y,\alpha)$ to itself: that is, an exact symplectic manifold with two noncompact ends that agree with the symplectization $\R\times Y$. The functoriality of LCH extends to these more general circumstances; see \cite{EHK}.
\end{remark}

One can construct a ``cobordism category'' whose objects are Legendrian links in $\R^3$ and whose morphisms are exact Lagrangian cobordisms. (The condition in Definition~\ref{def:cobordism} that $f$ is constant on the ends ensures that exact Lagrangian cobordisms can be composed by concatenation, see \cite{Chan-disc}.) The following result, roughly speaking, says that LCH gives a contravariant functor from this cobordism category to the category of DGAs.

\begin{theorem}[Ekholm--Honda--K\'alm\'an \cite{EHK}]
An exact Lagrangian cobordism $L$ 
\label{thm:ehk}
from $\Lambda_-$ to $\Lambda_+$ induces a DGA map between Chekanov--Eliashberg DGAs
\[
\Phi_L :\thinspace (\A_{\Lambda_+},\d_{\Lambda_+}) \to (\A_{\Lambda_-},\d_{\Lambda_-}),
\]
that is, an algebra map $\Phi_L :\thinspace \A_{\Lambda_+} \to \A_{\Lambda_-}$ such that $\Phi_L \circ \d_{\Lambda_+} = \d_{\Lambda_-} \circ \Phi_L$.
The maps $\Phi_L$ satisfy the following properties:
\begin{enumerate}
\item
if $L = \R\times\Lambda$ is a trivial Lagrangian cylinder, then $\Phi_L = \id_{\A_\Lambda}$;
\item 
if $L_1,L_2$ have the same ends $\Lambda_\pm$ and are isotopic through exact Lagrangian cobordisms, then $\Phi_{L_1},\Phi_{L_2}$ are chain homotopic;
\label{it:ehk2}
\item
if $L_1,L_2$ are exact Lagrangian cobordisms from $\Lambda_0$ to $\Lambda_1$ and from $\Lambda_1$ to $\Lambda_2$, respectively, and $L$ is the cobordism from $\Lambda_0$ to $\Lambda_2$ obtained by concatenating $L_1$ and $L_2$, then $\Phi_L$ is chain homotopic to $\Phi_{L_1} \circ \Phi_{L_2}$.
\end{enumerate}
\end{theorem}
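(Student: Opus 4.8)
The plan is to construct $\Phi_L$ as a holomorphic-curve count in the symplectization and then to verify the chain-map equation together with properties (1)--(3) by the standard SFT-style analysis of one-dimensional moduli spaces and their degenerations, following \cite{EHK} and using the analytic package developed in \cite{EES}. First I would fix a generic almost complex structure $J$ on $(\R\times\R^3,d(e^t\alpha))$ that is compatible with the symplectic form and cylindrical at both ends. For a Reeb chord $a$ of $\Lambda_+$ and Reeb chords $b_1,\ldots,b_m$ of $\Lambda_-$, I would consider the moduli space $\M_L(a;b_1,\ldots,b_m)$ of finite-energy $J$-holomorphic maps $u\colon D^2_m\to\R\times\R^3$ with boundary on $L$, one positive puncture asymptotic to $a$, and negative punctures asymptotic to $b_1,\ldots,b_m$ in cyclic order; counting its rigid elements with signs from coherent orientations and recording powers of $t$ via the intersections of $\partial u$ with the capping paths (exactly as in Section~\ref{ssec:symplectization}) defines $\Phi_L$ on Reeb chord generators. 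Setting $\Phi_L(t^{\pm1})=t^{\pm1}$ and extending multiplicatively makes $\Phi_L$ an algebra map by construction. Finiteness of each sum would follow from a Stokes'/action estimate: the energy of a disk in $\M_L(a;b_1,\ldots,b_m)$ is positive and controlled by the actions of $a$ and the $b_i$ together with the primitive $f$ on $L$, so only finitely many tuples $(b_1,\ldots,b_m)$ contribute; that such a disk carries exactly one positive puncture is forced by positivity of energy and the asymptotics at the two cylindrical ends.

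Next I would prove $\Phi_L\circ\d_{\Lambda_+}=\d_{\Lambda_-}\circ\Phi_L$ by taking a one-dimensional component of $\M_L(a;b_1,\ldots,b_m)$ and compactifying it. Because $L$ is exact there is no interior sphere or disk bubbling, so the only boundary points are broken holomorphic buildings, and these occur in exactly two ways: at the positive end a rigid disk over the trivial cylinder $\R\times\Lambda_+$ (a term of $\d_{\Lambda_+}$) breaks off above a rigid cobordism disk, producing the terms of $\Phi_L\circ\d_{\Lambda_+}$; and at one of the negative punctures a rigid disk over $\R\times\Lambda_-$ (a term of $\d_{\Lambda_-}$) breaks off below a rigid cobordism disk, producing the terms of $\d_{\Lambda_-}\circ\Phi_L$. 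Since the signed count of the boundary of a compact oriented $1$-manifold vanishes, the two sides agree, with signs tracked by coherent orientations.

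For the three listed properties the plan is as follows. For (1), when $L=\R\times\Lambda$ the moduli spaces are $\R$-invariant, and an action and index computation shows the only rigid disks with a single positive puncture are the trivial strips over Reeb chords of $\Lambda$, whence $\Phi_L(a)=a$ and $\Phi_L=\id_{\A_\Lambda}$. For (2), given an exact Lagrangian isotopy $\{L_s\}_{s\in[0,1]}$ I would form the parametrized moduli spaces $\bigcup_{s}\M_{L_s}(a;b_1,\ldots,b_m)$; counting elements rigid in the parametrized sense defines a map $K$, and the boundary analysis of the one-dimensional parametrized moduli spaces yields $\Phi_{L_0}-\Phi_{L_1}=\d_{\Lambda_-}\circ K+K\circ\d_{\Lambda_+}$ with $K$ a $(\Phi_{L_0},\Phi_{L_1})$-derivation, i.e.\ a DGA chain homotopy; running the same argument with $L$ fixed and $J$ varying shows $\Phi_L$ is independent of $J$ up to chain homotopy. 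For (3), I would realize the concatenation $L$ with a neck of length $R$ inserted near $\Lambda_1$ and let $R\to\infty$: SFT compactness forces every sequence of rigid disks for $L$ to converge to a two-level building whose top level is a rigid disk in $L_2$ with positive puncture at a Reeb chord of $\Lambda_2$ and negative punctures at Reeb chords of $\Lambda_1$, and whose bottom level is a collection of rigid disks in $L_1$ capping those $\Lambda_1$-chords; a gluing theorem shows conversely each such building glues to a unique rigid disk for $L$ when $R$ is large. This precisely realizes the composite $\Phi_{L_1}\circ\Phi_{L_2}$, so $\Phi_L=\Phi_{L_1}\circ\Phi_{L_2}$ for the stretched $J$, and then by the $J$-independence above $\Phi_L$ is chain homotopic to $\Phi_{L_1}\circ\Phi_{L_2}$ for any $J$.

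The hard part will be the analytic foundations underlying all of this, not the essentially formal algebra of the chain-map equation, chain homotopies, and composition formula. One needs transversality for punctured holomorphic disks with boundary on a non-compact Lagrangian (via generic or domain-dependent perturbations of $J$), an SFT/Gromov--Floer compactness theorem with the correct enumeration of limiting buildings, gluing theorems, and coherent orientations. The genuine obstacle is the compactness-and-classification of codimension-one degenerations: showing that a one-parameter family of cobordism disks can only break in the ways described above (and, for (3), can only split along the neck), ruling out interior bubbling by exactness and ruling out extra positive punctures by the energy/asymptotics argument, and then checking that the expected gluings produce all nearby curves. Once this is in place, everything else follows formally.
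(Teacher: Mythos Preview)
Your sketch is correct and is essentially the standard SFT/holomorphic-curve argument that \cite{EHK} carries out. The paper itself does not give a proof of Theorem~\ref{thm:ehk}; it states the result with attribution and only later sketches the special case $\Lambda_-=\emptyset$ (Theorem~\ref{thm:filling-aug} and Figure~\ref{fig:filling-aug}), where the boundary analysis of a one-dimensional moduli space is exactly the breaking picture you describe. One small caveat: your assignment $\Phi_L(t^{\pm1})=t^{\pm1}$ tacitly identifies the homology coefficients on the two ends, which requires a choice of path on $L$ between the base points (cf.\ Remark~\ref{rmk:coeffs}); with that choice made, everything you wrote goes through.
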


There are some subtleties in the precise content of Theorem~\ref{thm:ehk} that we discuss in the following two remarks.

\begin{remark}[coefficients]
Theorem~\ref{thm:ehk} is stated in \cite{EHK} with the DGAs being over $\Z_2$ and with no homology coefficients. 
\label{rmk:coeffs}
One can readily lift Theorem~\ref{thm:ehk} to include homology coefficients by choosing base points on $\Lambda_\pm$ and paths on $L$ connecting these base points; see e.g. \cite{CNS,Pan-fillings}. To lift Theorem~\ref{thm:ehk} from $\Z_2$ to $\Z$, one needs to coherently orient the moduli spaces that are used in the proof. This can be done in general when $L$ is spin, and in particular for $\dim L = 2$ when $L$ is orientable; see \cite{Karlsson-orient}.
\end{remark}

\begin{remark}[gradings]
If either of $\Lambda_\pm$ is a disconnected link, then the grading on the corresponding DGA is not well-defined and relies on a collection of choices; see Remark~\ref{rmk:links}. 
\label{rmk:gradings}
For the map $\Phi_L$ 
to preserve grading, the choices for $\Lambda_\pm$ need to be compatible in a suitable sense involving $L$. 

Even when both of $\Lambda_\pm$ are single-component knots, the extent to which $\Phi_L$ preserves grading depends on the Maslov number $m(L)$ of the Lagrangian $L$, defined to be the $\gcd$ of the Maslov numbers of all closed loops in $L$. If $m(L) = 0$ then $\Phi_L$ preserves the full $\Z$ grading on the DGAs; otherwise it preserves only the induced quotient grading in $\Z/(m(L)\Z)$. In particular, an oriented cobordism preserves at least a $\Z_2$ grading, while an unoriented cobordism need not preserve any grading at all. See \cite{EHK} for some discussion of these grading issues.
\end{remark}

\subsection{Decomposable cobordisms}

Here we briefly discuss how to construct exact Lagrangian cobordisms between Legendrian links, following \cite{EHK}. Say that a crossing in the Lagrangian projection of a Legendrian link is \textit{contractible} if the height of the corresponding Reeb chord can be made arbitrarily close to $0$ by a Legendrian isotopy of the link that corresponds to a planar isotopy of the Lagrangian projection.

\begin{theorem}[\cite{EHK}]
Let $\Lambda_\pm$ be Legendrian links in $\R^3$. 
\label{thm:decomp}
There is an exact Lagrangian cobordism from $\Lambda_-$ to $\Lambda_+$ if $\Lambda_-$ is obtained from $\Lambda_+$ by one of the following:
\begin{itemize}
\item
Legendrian isotopy;
\item
deleting a component of $\Lambda_+$ that is a standard Legendrian unknot (with $\tb=-1$) and is contractible in the complement of the remainder of $\Lambda_+$ (``unknot filling'');
\item
the ``pinch move'' shown in Figure~\ref{fig:decomp}, which is a saddle move in the $xz$ projection and a ``0-resolution'' of a contractible crossing in the $xy$ projection.
\end{itemize}
\end{theorem}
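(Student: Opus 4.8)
\noindent\emph{Proof proposal.} Each of the three moves is supported in a compact region of the Lagrangian (or front) projection, so the plan is to construct, for each move, an exact Lagrangian cobordism in $\R\times\R^3$ that agrees with the trivial cylinder $\R\times\Lambda_\pm$ outside a compact set; such local pieces automatically glue to trivial cylinders over the part of the link away from the move, and (as noted after Definition~\ref{def:cobordism}) exact Lagrangian cobordisms concatenate. I would handle the three cases in turn, in increasing order of difficulty.

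\emph{Legendrian isotopy.} Given a Legendrian isotopy $\{\Lambda_s\}_{s\in[0,1]}$ from $\Lambda_-=\Lambda_0$ to $\Lambda_+=\Lambda_1$, extend it to an ambient contact isotopy by the Legendrian isotopy extension theorem, with contact Hamiltonian $H_s$, and fix a reparametrization $\rho\colon\R\to[0,1]$ equal to $0$ for $\tau\le -T$ and to $1$ for $\tau\ge T$. Set $L=\{(\tau,p)\ :\ \tau\in\R,\ p\in\Lambda_{\rho(\tau)}\}$. This is cylindrical over $\Lambda_\pm$ near the two ends, and a direct computation shows it is Lagrangian for $d(e^\tau\alpha)$, using that each slice $\Lambda_s$ is Legendrian. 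Exactness is the standard fact that the trace of a Legendrian isotopy is exact: a primitive $f$ for $e^\tau\alpha|_L$ can be written explicitly in terms of $\rho'$ and $H_{\rho(\tau)}$, and it is locally constant on each end because $\rho'$ vanishes there and $\alpha|_{\Lambda_\pm}=0$.

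\emph{Unknot filling.} Suppose $\Lambda_+=\Lambda_-\sqcup U$ with $U$ a standard Legendrian unknot ($\tb=-1$) contractible in the complement of $\Lambda_-$. Contractibility lets us isotope $U$ into an arbitrarily small Darboux ball disjoint from $\Lambda_-$. In a neighborhood of this ball there is a standard embedded exact Lagrangian disk $D\subset\R\times\R^3$ with a single positive end asymptotic to $\R\times U$ and no negative end (the Lagrangian ``cap'' of the $\tb=-1$ unknot, which one writes down by an explicit local model, or obtains from front spinning). Take $L=(\R\times\Lambda_-)\cup D$. Since $H^1(D)=0$ and $e^\tau\alpha|_D$ is closed it is exact, and on the unique (positive) end $e^\tau\alpha$ vanishes, so a primitive is locally constant there; hence $L$ is an exact Lagrangian cobordism from $\Lambda_-$ to $\Lambda_+$.

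\emph{Pinch move, and the main obstacle.} This is the crux. First, using the previous case, apply a Legendrian isotopy so that near the distinguished crossing the link is locally standard and the corresponding Reeb chord has height $\varepsilon$ as small as desired. Near this short Reeb chord I would write down an explicit ``Lagrangian saddle'' $S\subset\R\times\R^3$: a surface with a single index-$1$ critical point of $\tau$ that interpolates, as $\tau$ increases, between the two smoothed strands of the $0$-resolution and the two crossing strands of $\Lambda_+$, and is cylindrical over the local pictures of $\Lambda_-$ and $\Lambda_+$ near its two ends. One then checks that $S$ is embedded, Lagrangian, and glues to trivial cylinders over the rest of the link. The delicate point, and the place I expect the real difficulty, is exactness: unlike the disk and cylinder, the glued cobordism has $H^1\ne 0$, so one must verify that the period of $e^\tau\alpha$ over the core of the attached $1$-handle vanishes, equivalently that a primitive of $e^\tau\alpha|_L$ is constant on both ends. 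That period is controlled by the symplectic area of a disk spanning the core, hence by the action of the Reeb chord being resolved — which is precisely why the crossing must be contractible (so that $\varepsilon\to 0$). The Legendrian-isotopy and unknot-filling steps are routine once one has the standard local models; the genuine work is producing the embedded Lagrangian-saddle model and arranging its exactness with primitives locally constant on the ends.
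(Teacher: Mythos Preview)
The paper does not prove this result; it is quoted from \cite{EHK} without argument and used as a black box for the discussion of decomposable cobordisms. So there is no proof here to compare against directly. That said, your outline contains a genuine error in the isotopy case and a gap in the pinch-move case.

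\emph{Legendrian isotopy.} The naive trace $L=\{(\tau,p):p\in\Lambda_{\rho(\tau)}\}$ is \emph{not} Lagrangian in general. Parametrize $\Lambda_s$ by $\theta\mapsto(x_s(\theta),y_s(\theta),z_s(\theta))$; pulling back $e^\tau\alpha$ to $L$ and differentiating, the Legendrian condition $\partial_\theta z_s=y_s\,\partial_\theta x_s$ gives
\[
d(e^\tau\alpha)\big|_L \;=\; e^\tau\rho'(\tau)\bigl(\partial_s y\cdot\partial_\theta x-\partial_\theta y\cdot\partial_s x\bigr)\,d\theta\wedge d\tau,
\]
which has no reason to vanish for a generic isotopy. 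The correct construction (see \cite{chan} or \cite{EHK}) lifts the ambient contact isotopy $\phi_s$, with $\phi_s^\ast\alpha=e^{h_s}\alpha$, to a Hamiltonian isotopy of the symplectization via $(\tau,p)\mapsto(\tau-h_s(p),\phi_s(p))$, and takes the image of the straight cylinder $\R\times\Lambda_-$ under a cut-off version of this lift. You mention the contact Hamiltonian $H_s$ but the surface you actually write down ignores it entirely; your ``direct computation shows it is Lagrangian'' is simply false.

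\emph{Pinch move.} Your instinct that exactness (and the matching of primitives on the ends) is the crux is correct, but the argument ``the period is controlled by the Reeb-chord action $\varepsilon$, and contractibility lets $\varepsilon\to 0$'' is not a proof: a small nonzero period still obstructs exactness, and no limiting procedure produces an embedded exact Lagrangian with the required cylindrical ends. In \cite{EHK} one writes down an explicit local saddle model and verifies the Lagrangian and exactness conditions for it directly; the contractibility hypothesis is what guarantees this local model can be implanted in a Darboux neighborhood of the short Reeb chord disjoint from the rest of $\Lambda_+$, not a device for making a period tend to zero.
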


\begin{figure}[ht]
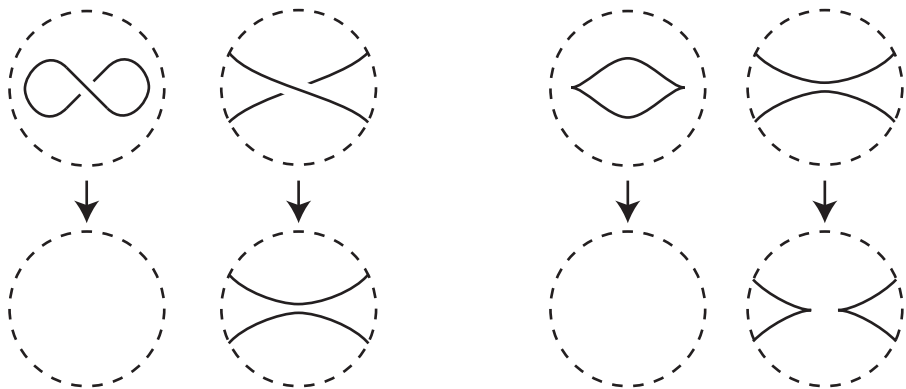
{\tiny
\begin{overpic}{figures/decomposable}
\end{overpic}}
\caption{Two local moves representing exact Lagrangian cobordisms:
the left two figures in each group are unknot filling; the right two figures are a pinch move. The left most group shows the Lagrangian projections and the right most group shows the front projections.  
(Arrows indicate going from the top of the cobordism towards the bottom.) 
For the pinch move in the Lagrangian projection, it is crucial that the crossing being resolved is contractible.
}
\label{fig:decomp}
\end{figure}

Any concatenation of the ``elementary'' cobordisms listed in Theorem~\ref{thm:decomp} yields an exact Lagrangian cobordism, which we call \textit{decomposable}. It is currently an open question whether any exact Lagrangian cobordism is (Lagrangian isotopic to) a decomposable cobordism.

\begin{example}
In practice, one constructs decomposable cobordisms from top to bottom, starting with $\Lambda_+$ and successively applying pinch moves and unknot fillings. 
\label{ex:trefoil-fillings}
An illustrative example from \cite{EHK} is when $\Lambda_+$ is the standard Legendrian right-handed trefoil, shown in Figure~\ref{fig:trefoil}. The crossings $a_3, a_4, a_5$ are all contractible: each of them can be made to have arbitrarily small height. One can construct five decomposable cobordisms from the empty set to $\Lambda_+$ (``fillings'', see Section~\ref{ssec:fillings} below) as follows.
Let $i,j$ be two distinct integers in $\{1,2,3\}$. Apply a pinch move to the crossing in the $xy$ projection of $\Lambda_+$ labeled by $i$, followed by a pinch move to the crossing labeled by $j$. The result is a standard Legendrian unknot, which we can then delete by unknot filling, resulting in the empty set. Of the six decomposable cobordisms corresponding to different choices of $(i,j)$, it can be shown that $(i,j) = (1,3)$ and $(3,1)$ yield isotopic cobordisms. It is proven in \cite{EHK} that the remaining five cobordisms are pairwise non-isotopic; see Section~\ref{ssec:fillings}.
\end{example}

\subsection{Fillings}
\label{ssec:fillings}

We now focus on a particular case of an exact Lagrangian cobordism, when the negative end $\Lambda_-$ is empty. In this case the cobordism is called an \textit{exact Lagrangian filling} of the Legendrian link $\Lambda_+$. With the simplest possible choice of coefficients as in \cite{EHK}, the DGA of the empty link is $(\Z_2,0)$, and it follows from Theorem~\ref{thm:ehk} that an exact Lagrangian filling of $\Lambda_+$ gives an augmentation from $(\A_{\Lambda_+},\d_{\Lambda_+})$ to $(\Z_2,0)$. When $L$ is orientable, from \cite{Karlsson-orient} we can lift the augmentation from $\Z_2$ to $\Z$. Indeed, we have the following result.

\begin{theorem}
Let $L$ be a connected, orientable, exact Lagrangian filling of a Legendrian link $\Lambda$. 
\label{thm:filling-aug}
Then $L$ induces an augmentation
\[
\e_L :\thinspace (\A_\Lambda,\d_\Lambda) \to (\Z[\pi_1(L)],0).
\]
\end{theorem}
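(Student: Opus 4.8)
The plan is to obtain $\e_L$ directly from the Ekholm--Honda--K\'alm\'an functoriality theorem (Theorem~\ref{thm:ehk}), enhanced in two by-now-standard ways: lifting the coefficients from $\Z_2$ to $\Z$, and lifting from the plain DGA to group-ring coefficients. First I would observe that a filling is precisely the case of an exact Lagrangian cobordism in which the negative end $\Lambda_-$ is empty. The Chekanov--Eliashberg DGA of the empty link is just the ground ring with zero differential, so the cobordism map $\Phi_L$ produced by Theorem~\ref{thm:ehk} is tautologically a DGA map from $(\A_\Lambda,\d_\Lambda)$ to the ground ring, i.e.\ an augmentation. Unwinding the definition, $\Phi_L$ counts rigid $J$-holomorphic disks $u$ in the symplectization $(\R\times\R^3,d(e^t\alpha))$ with boundary on $L$, one positive puncture asymptotic at $+\infty$ to a Reeb chord $a$ of $\Lambda$, and---because $\Lambda_-=\emptyset$---no negative punctures; each such $u$ contributes $\pm 1$ to $\Phi_L(a)$.

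Next I would carry out the two enhancements. To pass from $\Z_2$ to $\Z$, one needs a coherent orientation of the relevant moduli spaces; by \cite{Karlsson-orient} this can be done whenever $L$ is spin, which for a connected orientable surface is automatic. To pass to $\Z[\pi_1(L)]$-coefficients I would follow the recipe used to lift the EHK maps to homology coefficients, as in \cite{CNS,Pan-fillings}, specialized to a filling. Choose a base point $p_0\in L$; for each component $\Lambda_i$ of $\Lambda$ (sitting inside $L$ via the cylindrical end) choose a path in $L$ from $p_0$ to the marked point $*_i\in\Lambda_i$; and fix capping paths in $\Lambda$ near each Reeb chord as in the grading conventions of Section~\ref{firstdef}. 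For a disk $u$ counted above, its boundary is a single arc in $L$ (one puncture), and closing it up with the fixed capping paths yields a loop representing some $g_u\in\pi_1(L,p_0)$. Then set $\e_L(a)=\sum_u \epsilon(u)\,g_u$ and $\e_L(t_i)=[\iota_*\Lambda_i]\in\pi_1(L)$, where $\iota\colon\Lambda\hookrightarrow L$ is the inclusion, and extend multiplicatively to all of $\A_\Lambda$.

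Finally I would check that $\e_L$ is a DGA map. Grading preservation---so that $\e_L$ sends every generator of nonzero degree to $0$, as an augmentation must---follows from Remark~\ref{rmk:gradings} together with the fact that an orientable exact Lagrangian filling has vanishing Maslov number, so that $\Phi_L$ preserves the full $\Z$-grading. The identity $\e_L\circ\d_\Lambda=0$ is the usual ``boundary of a one-dimensional moduli space'' argument: the compactification of a one-dimensional moduli space of disks on $L$ with positive puncture at $a$ has as its ends the two-level broken configurations consisting of a rigid disk over $\R\times\Lambda$ (contributing a term of $\d_\Lambda a$) glued onto rigid disks over $L$ (contributing to $\Phi_L$), and the signed count of these ends is zero; the only point not already in \cite{EHK} is that the $\pi_1(L)$-decorations multiply correctly under gluing, which holds because concatenation of disk boundaries in $L$ corresponds to concatenation of the associated based loops. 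I expect the main obstacle to be the coherent orientation of the moduli spaces required for the passage to $\Z$-coefficients; the remaining group-ring bookkeeping is essentially formal once the base point, marked points, and capping paths are fixed, and the unsigned $\Z_2$-version of the statement is already contained in Theorem~\ref{thm:ehk}.
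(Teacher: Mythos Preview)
Your sketch follows essentially the same route as the paper: define $\e_L(a)$ as a signed count of rigid holomorphic disks in $(\R\times\R^3,L)$ with a single positive puncture at $a$, decorated by the element of $\pi_1(L)$ obtained by closing up $\partial\Delta$ with the capping path for $a$; then prove $\e_L\circ\d_\Lambda=0$ by analyzing the ends of the compactified $1$-dimensional moduli space $\M^1(a)$.

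Two small points are worth flagging. First, the paper explicitly isolates why \emph{exactness} of $L$ is needed: it rules out boundary disk bubbling (a nonconstant holomorphic disk with boundary entirely on $L$ would have area $\int_{\partial\Delta} e^t\alpha=0$). You fold this into ``the usual argument'' and the citation of \cite{EHK}, which is fine for a sketch, but it is the one place where the hypothesis on $L$ actually enters. Second, your assertion that an orientable exact Lagrangian filling automatically has vanishing Maslov number is not correct in general; orientability only forces the Maslov number to be even. The paper does not claim this and simply leaves the grading issue to Remark~\ref{rmk:gradings}: the induced map is a $(\Z/m(L)\Z)$-graded augmentation, which is a genuine $\Z$-graded augmentation only when $m(L)=0$. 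You should either add $m(L)=0$ as a hypothesis for the grading statement or, like the paper, not assert full $\Z$-grading preservation.
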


Here we sketch the definition of the map $\e_L$, following the more general construction of cobordism maps in \cite{EHK}. Let $a$ be a Reeb chord of $\Lambda$, and let $\M(a)$ denote the moduli space of rigid holomorphic disks in $\R\times\R^3$ with boundary on $L$ and a single positive puncture asymptotic to $a$. For $\Delta \in \M(a)$, we can concatenate the oriented boundary $\partial \Delta$ with the capping path for $a$ in $\Lambda$ to produce an element $[\Delta] \in \pi_1(L)$. Now define
\[
\e_L = \sum_{\Delta \in \M(a)} (\sgn(\Delta)) [\Delta]
\]
where $\sgn(\Delta) \in \{\pm 1\}$ is a sign coming from the orientation of $\M(a)$, and extend $\e_L$ to an algebra map on all of $\A_{\Lambda}$. See Figure~\ref{fig:cobordism} for an illustration.

A pictorial sketch of the proof of Theorem~\ref{thm:filling-aug} is given in Figure~\ref{fig:filling-aug}. Briefly, one follows standard Floer-type arguments by considering the compactification of $\M^1(a)$, the $1$-dimensional moduli space of holomorphic disks in $\R\times\R^3$ with boundary on $L$ and a positive puncture at $a$. Contributions to the boundary of $\M^1(a)$ come from a holomorphic disk in $(\R\times\R^3,\R\times\Lambda)$ with positive puncture at $a$ and some number of negative punctures, glued to holomorphic disks in $(\R\times\R^3,L)$. Each of these contributions counts a term in $\e_L(\d(a))$; for instance, in the left diagram in Figure~\ref{fig:filling-aug}, we have (disregarding elements of $\pi_1(L)$) $\d(a) = a_1a_2+\cdots$ and $\e_L(\d(a)) = \e_L(a_1)\e_L(a_2)+\cdots$. Since the compactification of $\M^1(a)$ is a compact $1$-manifold, these terms must cancel in pairs, yielding the theorem. 
It should be noted that the exactness of $L$ rules out one possibly problematic degeneration in $\M^1(a)$, ``boundary disk bubbling'', as shown in the right diagram in Figure~\ref{fig:filling-aug}: there can be no nontrivial holomorphic disk $\Delta$ with boundary fully on $L$, because the area of $\Delta$ would be $\int_\Delta \omega = \int_{\partial \Delta} e^t\alpha = 0$ by exactness.

\begin{figure}
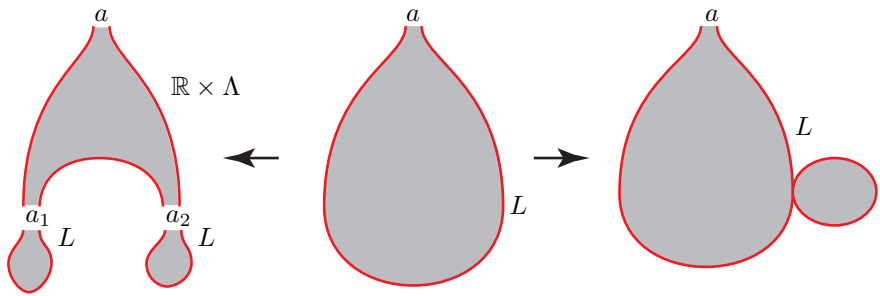

{\small
\begin{overpic}{figures/filling-aug}
\put(62, 75){$\R\times \Lambda$}
\put(19, 18){$L$}
\put(72, 18){$L$}
\put(190, 30){$L$}
\put(298, 60){$L$}
\put(33, 103){$a$}
\put(7, 27){$a_1$}
\put(60, 27){$a_2$}
\put(151, 103){$a$}
\put(264, 103){$a$}
\end{overpic}}
\caption{
Possible degenerations of disks in $\M^1(a)$ (the right one is actually forbidden).
}
\label{fig:filling-aug}
\end{figure}

\begin{example}
Consider the trefoil $\Lambda$ from Examples~\ref{ex:trefoil} and~\ref{ex:trefoil-fillings}.
Using a combinatorial formula for the cobordism maps corresponding to pinch moves, it is computed in \cite{EHK} that the five fillings of the $\Lambda$ from produce the five distinct augmentations $(\A_\Lambda,\d_\Lambda) \to (\Z_2,0)$ (where we quotient the augmentations from Theorem~\ref{thm:filling-aug} by the map $\Z[\pi_1(L)] \to \Z_2$). For grading reasons, these augmentations are not chain homotopic to each other, and it follows from Theorem~\ref{thm:ehk} that the five fillings are all non-isotopic. In \cite{Pan-fillings}, Pan generalizes this result of Ekholm--Honda--K\'alm\'an to produce $\frac{1}{n+1}\binom{2n}{n}$ distinct fillings of the Legendrian $(2,n)$ torus knot for $n \geq 1$; in the general case, not all of these fillings induce distinct augmentations to $\Z_2$, but they do induce all distinct augmentations to $\Z_2[\pi_1(L)]$. 
\end{example}

\begin{remark}
When constructing Fukaya categories, one often considers not exact Lagrangians but exact Lagrangians equipped with local systems. In our context, a rank $n$ local system on an exact filling $L$ of a Legendrian knot $\Lambda$ consists of a representation $\pi_1(L) \to GL(n,\kk)$ for some $n$ and some field $\kk$. If we compose this representation with the ``universal'' augmentation given in Theorem~\ref{thm:filling-aug}, we obtain a DGA map $(\A_\Lambda,\d_\Lambda) \to (\Mat_n(\kk),0)$. That is,  in the terminology of Section~\ref{ssec:rep}, an exact filling of $\Lambda$ with a rank $n$ local system induces an $n$-dimensional representation of $(\A_\Lambda,\d_\Lambda)$.
\end{remark}

\subsection{Augmentations not from fillings}

From the preceding discussion, any exact Lagrangian filling of a Legendrian knot $\Lambda$ induces an augmentation of the DGA $(\A_\Lambda,\d_\Lambda)$, to say $\Z_2$ for simplicity. It is however not the case that all augmentations come from fillings.
As an example, consider the Legendrian figure eight knot $\Lambda$ shown in the left of Figure~\ref{fig:figure-eight}. It is readily checked that the DGA for $\Lambda$ has a unique augmentation to $\Z_2$. However, there is a topological obstruction to $\Lambda$ having an (embedded, orientable) Lagrangian filling, exact or not. If $L$ were such a filling, then by work of Chantraine \cite{chan}, we would have $\tb(\Lambda) = 2g(L)-1$, where $g(L)$ is the genus of $L$; but $\tb(\Lambda)=-3$.

\begin{figure}
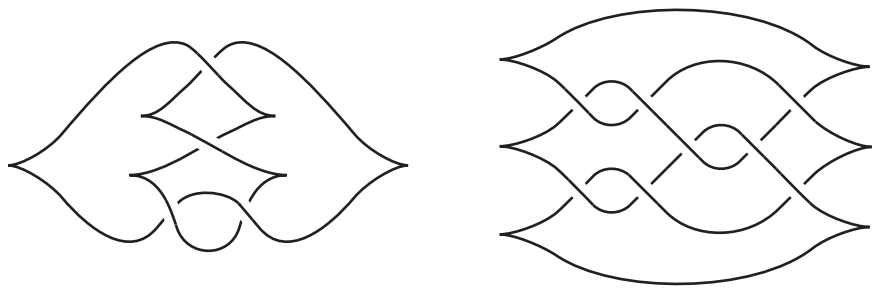

{\tiny
\begin{overpic}{figures/figure-eight}
\end{overpic}}
\caption{
Two Legendrian knots with augmentations that do not come from fillings: the figure eight (left) and a knot of type $m(8_{21})$ (right).
}
\label{fig:figure-eight}
\end{figure}

A subtler obstruction to augmentations coming from fillings is provided by the so-called Seidel isomorphism. This relates the homology of a filling to the linearized LCH of the corresponding augmentation, and was for a while a folk result in the subject derived from an observation of Seidel (see \cite{EkholmSFT} for a statement from the work of Ekholm) before being formally proven by Dimitroglou Rizell \cite{DR}.

\begin{theorem}[Seidel isomorphism]\label{si}
Let $L$ be an exact Lagrangian filling of a Legendrian knot $\Lambda$, 
\label{thm:seidel}
and let $\e_L :\thinspace (\A_\Lambda,\d_\Lambda) \to (\Z_2,0)$ be the induced augmentation to $\Z_2$. Then
\[
\LCH^\ast_{\e_L}(\Lambda) \cong H_{2-\ast}(L;\Z_2).
\]
\end{theorem}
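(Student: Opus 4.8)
The plan is to exhibit $\LCH^\ast_{\e_L}(\Lambda)$ as one ``half'' of a single holomorphic-curve complex built from the exact filling $L$, the other half being the (co)homology of $L$ itself. Work in the (completed) symplectization $\R\times\R^3$, where $L$ is an honest exact Lagrangian that is cylindrical over $\Lambda$ at the positive end, and fix a generic compatible almost complex structure of SFT type. Choose a Morse function $f$ on $L$ and a Hamiltonian perturbation $L'$ of $L$ that realizes $f$ in the interior (so $L\cap L'$ is in bijection with $\operatorname{Crit}(f)$) and is cylinder-preserving near the positive end, with $\Lambda$ pushed off itself slightly in the Reeb direction. Following the methods of \cite{DR}, building on the \emph{Cthulhu complex} of Chantraine--Dimitroglou Rizell--Ghiggini--Golovko applied to the pair of fillings $(L,L)$, one forms a complex which -- because both negative ends are empty -- splits as a $\Z_2$-module in the form
\[
\mathfrak{C} \;=\; \big(\text{linearized LCH cochains of }\Lambda\text{ with respect to }\e_L\big)\ \oplus\ CF^\ast(L,L'),
\]
with a differential that is triangular with respect to this splitting: on the first summand it is the linearized contact codifferential $\delta_1^{\e_L}$ (twisted by the augmentation $\e_L$ induced by $L$, Theorem~\ref{thm:filling-aug}), on the second it is the Floer differential of $(L,L')$, and the single off-diagonal term counts ``banana''-type disks with one positive puncture at a Reeb chord of $\Lambda$ and boundary arcs running along $L$ and $L'$.

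The key input is that $\mathfrak{C}$ is \emph{acyclic}. The only degeneration in the relevant $1$-dimensional moduli spaces that could obstruct this is the bubbling off of a nonconstant holomorphic disk with boundary entirely on $L$ (or entirely on $L'$); but, exactly as in the sketch of Theorem~\ref{thm:filling-aug} and the right-hand picture of Figure~\ref{fig:filling-aug}, exactness of $L$ forbids this, since such a disk $\Delta$ would satisfy $\int_\Delta\omega = \int_{\partial\Delta}e^t\alpha = 0$. Granting the (substantial) analytic package of transversality, SFT-compactness, and gluing, one concludes that $\mathfrak{C}$ carries a differential squaring to zero and that $H_\ast(\mathfrak{C})=0$ -- this is the Cthulhu acyclicity theorem in the special case of a pair of equal fillings. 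Since $\mathfrak{C}$ is acyclic and triangular, the off-diagonal map is a quasi-isomorphism between the two summands, and hence
\[
\LCH^\ast_{\e_L}(\Lambda)\;\cong\; HF^{\ast}(L,L')
\]
up to an overall grading shift to be pinned down.

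It then remains to compute $HF^\ast(L,L')$. Since $L'$ is a $C^2$-small Morse perturbation of $L$ and the positive end is perturbed so that no mixed Reeb chords contribute, the Floer complex degenerates to the Morse complex of $f$ with the boundary condition dictated by the construction, which computes the relative cohomology $H^\ast(L,\Lambda;\Z_2)$. Truncating the cylindrical end presents $L$ as (homotopy equivalent to) a compact oriented surface with boundary $\Lambda$, so Poincar\'e--Lefschetz duality with $\dim L=2$ gives $H^\ast(L,\Lambda;\Z_2)\cong H_{2-\ast}(L;\Z_2)$. Composing the two identifications yields $\LCH^\ast_{\e_L}(\Lambda)\cong H_{2-\ast}(L;\Z_2)$. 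The precise conventions -- the shift $|a^\vee|=|a|+1$ in the linearized cochain complex, the Morse-index-to-Floer-degree convention, and the degree of the banana map -- must be matched so that this composite carries no net shift; this can be fixed once and for all against an explicit example, e.g. the genus-one exact fillings of the standard right-handed trefoil, for which Example~\ref{ex:trefoil-lin} gives $\LCH^2_\e(\Lambda)\cong\Z_2$ and $\LCH^1_\e(\Lambda)\cong(\Z_2)^2$, agreeing with $H_0$ and $H_1$ of the once-punctured torus.

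The main obstacle is analytic rather than formal. Constructing $\mathfrak{C}$ rigorously requires Fredholm theory, transversality, and an SFT-type compactness theorem for moduli of holomorphic disks with boundary \emph{simultaneously} on the noncompact, cylindrical-at-infinity Lagrangians $\R\times\Lambda$, $L$, and $L'$, together with a complete enumeration of the codimension-one breakings so that the triangular differential squares to zero and the acyclicity argument survives all of them. One must also check that the augmentation in the $\delta_1^{\e_L}$ block is exactly the \emph{geometric} augmentation $\e_L$ of Theorem~\ref{thm:filling-aug}; this is where the geometry of the filling feeds back into the algebra, and it is seen by a neck-stretching argument degenerating the banana disks into a disk in $(\R\times\R^3,\R\times\Lambda)$ with several negative punctures glued to rigid disks on $L$. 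Over $\Z_2$ these ingredients are available via now-standard techniques -- which is why the theorem is known in this generality -- and the remaining grading bookkeeping, while tedious, is routine.
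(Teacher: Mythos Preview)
Your outline is correct and lands in essentially the same place as the proof the paper cites, but the packaging is slightly different and worth nating. The paper describes Dimitroglou Rizell's argument as an \emph{exact triangle} with three terms---the linearized LCH cochain complex, the Morse complex of $L$, and a wrapped Floer complex of $L$---and then uses that the wrapped Floer homology of $L$ vanishes to conclude that the first two are isomorphic. You instead work directly with the two-term Cthulhu complex of \cite{CDGG2} for the pair $(L,L)$ and invoke its acyclicity; the off-diagonal banana map is then a quasi-isomorphism, and you finish by identifying $HF^\ast(L,L')$ with $H^\ast(L,\Lambda;\Z_2)$ and applying Poincar\'e--Lefschetz duality. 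These are two views of the same cone: the vanishing of wrapped Floer homology in \cite{DR} is what makes the total complex acyclic in the Cthulhu language, and the paper explicitly points to \cite{CDGG1,CDGG2} as the framework subsuming this. Your route has the virtue of making the role of $\e_L$ transparent via the neck-stretching you mention; the exact-triangle formulation makes clearer what geometric object is actually vanishing. One small point to watch: which of $H^\ast(L)$ or $H^\ast(L,\Lambda)$ the intersection Floer complex computes depends on the sign of the Reeb pushoff at infinity (this is the $\Aug_+$ versus $\Aug_-$ dichotomy in Section~\ref{sec:augcat}), so your appeal to an explicit example to fix the grading conventions is indeed the right way to nail this down.
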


\noindent
For example, for the trefoil $\Lambda$ with filling $L$ (topologically a punctured torus), $\LCH^*_\e(\Lambda)$ was computed in Example~\ref{ex:trefoil-lin}, and it agrees with $H_{2-\ast}(L;\Z_2)$.

The proof in \cite{DR} of Theorem~\ref{thm:seidel} constructs an exact triangle relating the linearized LCH cochain complex of $\Lambda$, the Morse complex of $L$, and a wrapped Floer complex associated to $L$, and then observes that the wrapped Floer homology of $L$ vanishes. Theorem~\ref{thm:seidel} has been subsequently generalized in several directions, notably to bilinearized LCH by Bourgeois and Chantraine \cite{BC}; this fits in with a larger picture of Floer homology associated to Lagrangian cobordisms, as developed by Chantraine, Dimitroglou Rizell, Ghiggini, and Golovko \cite{CDGG1,CDGG2}.

\begin{example}
Consider the Legendrian $m(8_{21})$ knot shown on the right of Figure~\ref{fig:figure-eight}. This knot was famously considered by Melvin and Shrestha \cite{MS} and has the unusual property that it has augmentations with different linearized LCH. For one set of augmentations, the Poincar\'e polynomial for $\LCH^*_\e$ is $t^2+2t$, while for another set it is $2t^2+4t+1$. The first set can (and indeed does) come from oriented exact Lagrangian fillings. We claim that the second set cannot, because of the Seidel isomorphism. Indeed, any oriented filling must have even Maslov number, whence the Seidel isomorphism holds at least for grading mod $2$. Any oriented exact filling $L$ must be connected (by Stokes, there are no closed exact Lagrangian surfaces in $\R\times\R^3$) and thus satisfies $H_{\text{even}}(L;\Z_2) \cong \Z_2$, while $\LCH^{\text{even}}_\e \cong (\Z_2)^3$.
\end{example}

Theorem~\ref{si} is very useful at obstructing a Legendrian knot from having an exact Lagrangian filling. For example in \cite{LipSab} Lipman and Sabloff use this result to completely characterize which Legendrian ``4-plat knots'' have fillings. In the opposite direction, 
Etg\"u \cite{Etgu} has shown that there are Legendrian knots with augmentations whose linearized contact homology is isomorphic to the homology of a surface, in accordance with Theorem~\ref{si}, but which do not come from any filling.

\begin{remark}[Coefficients and gradings]
As in Remark~\ref{rmk:coeffs}, the work of Karlsson \cite{Karlsson-orient} can be used to promote the Seidel isomorphism to arbitrary coefficients in the case where the filling $L$ is orientable; for some discussion, see \cite{CDGG2}. As in Remark~\ref{rmk:gradings}, the extent to which the Seidel isomorphism is graded depends on the Maslov number of $L$. For instance, in the setting where $L$ is orientable but does not necessarily have Maslov number $0$, the isomorphism is only guaranteed to hold when the gradings are taken in $\Z_2$.
\end{remark}

\begin{remark}[Interpretation in $\Aug_+$]
It is observed in \cite{NRSSZ} that the Seidel isomorphism can be reinterpreted in a natural way in the augmentation category $\Aug_+(\Lambda,\kk)$. Here the statement of Theorem~\ref{si} becomes: if $L$ is an exact Lagrangian filling of $\Lambda$ with augmentation $\e_L$, then
\[
H^*\Hom(\e_L,\e_L) \cong H^*(L).
\]
This version of the isomorphism bears a strong similarity to a foundational property in Lagrangian intersection Floer homology, where (roughly speaking) if $L$ is a Lagrangian then we have $HF^*(L,L) \cong H^*(L)$. This is in accordance with the interpretation of $\Aug_+$ as a version of a Fukaya category, as discussed previously in Section~\ref{ssec:aug-properties}.
\end{remark}

\section{LCH and Weinstein Domains}\label{LCHandWeinstein}

So far, we have tried to provide a self-contained introduction to Legendrian contact homology and the Chekanov--Eliashberg DGA, viewed as interesting invariants of Legendrian knots. However, Legendrian contact homology also occupies a key role in modern symplectic topology through its role in studying Liouville and Weinstein domains. In this section we give a very limited and rather sketchy discussion of this picture; more details can be found in the references. The reader is cautioned that this story is currently rapidly developing and parts of it are not entirely rigorous at the moment.

The beginning point for this discussion is a certain type of symplectic manifold with contact boundary called a \textit{Liouville domain} \cite{Seidel-biased}. This is a compact symplectic manifold $(X,\omega)$ such that $\omega = d\lambda$ is exact with primitive $1$-form $\lambda$, resulting in the \textit{Liouville vector field} $Z$ on $X$ defined by $\lambda = i_Z\omega$, and such that $Z$ points outwards along $\partial X$. The boundary $Y = \partial X$ is then a contact manifold with contact $1$-form $\lambda$, and near the boundary $X$ looks like the symplectization of $Y$.

A Liouville domain $X$ is called a \textit{Weinstein domain} \cite{EG} if it is equipped with a Morse function $\phi$ that is locally constant on $\partial X$ and for which the Liouville vector field $Z$ is gradient-like. A nice feature of Weinstein domains is that one can adapt the standard topological handle-decomposition picture for $X$ from the Morse theory of $\phi$ to the symplectic setting. If $\dim X = 2n$, then each handle in the handle decomposition is of index $\leq n$, and each one is modelled by a standard symplectic handle called a \textit{Weinstein handle}. The handles of index $<n$ and $n$ are called subcritical and critical, respectively. One can then build up $X$ by first attaching all of the subcritical handles, resulting in a ``subcritical Weinstein domain'' $X_0$, and then attaching the critical handles. These critical handles are attached to $X_0$ along attaching spheres in the contact boundary $\partial X_0$ which are in fact Legendrian. The symplectic topology of the subcritical domain $X_0$ turns out to be fairly simple, and the interesting symplectic topology of $X$ is determined by the Legendrian attaching spheres in $\partial X_0$.

This leads to the following picture. Let $X_0$ be a subcritical Weinstein domain with contact boundary $Y_0$, and let $\Lambda$ be a Legendrian sphere in $Y_0$. We can then construct a Weinstein domain $X$ by attaching a Weinstein handle to $X_0$ along $\Lambda$; the isotopy type of $\Lambda$ determines $X$ up to symplectomorphism, and the boundary $\partial X$ is obtained from $\partial X_0$ by Legendrian surgery on $\Lambda$. See Figure~\ref{fig-Weinstein} for a schematic picture.

\begin{figure}
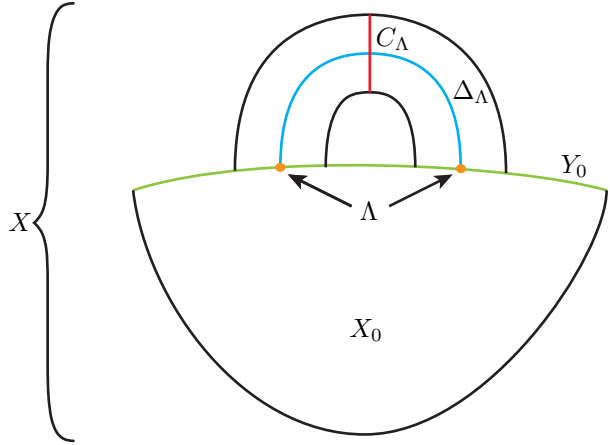

{\small
\begin{overpic}{figures/Weinstein}
\put(120, 40){$X_0$}
\put(-9, 80){$X$}
\put(124, 84){$\Lambda$}
\put(159, 130){$\Delta_\Lambda$}
\put(130, 150){$C_\Lambda$}
\put(200, 102){$Y_0$}
\end{overpic}}
\caption{Attaching a critical Weinstein handle to a subcritical Weinstein domain $X_0$ along a Legendrian sphere $\Lambda \subset Y_0 = \partial X_0$ to produce a Weinstein domain $X$. Also pictured are the Lagrangian core $\Delta_\Lambda$ and cocore $C_\Lambda$ of the handle.}
\label{fig-Weinstein}
\end{figure}

There are various interesting symplectic invariants that one can associate to $X$. Key among these are \textit{linearized contact homology} $CH_*(X)$, which is the contact homology of the boundary $\partial X$ linearized by the augmentation coming from the filling $X$, and \textit{symplectic homology} $SH_*(X)$. See e.g. \cite{BEE} for definitions and a history of these invariants.

A key result announced by Bourgeois, Ekholm, and Eliashberg \cite{BEE} is that both $CH_*(X)$ and $SH_*(X)$ are essentially determined by the Chekanov--Eliashberg DGA $(\A_\Lambda,\d_\Lambda)$ of $\Lambda$. For linearized contact homology, there is an exact triangle
\[
\cdots \to CH(X) \to CH(X_0) \to LCH^{\text{cyc}}(\Lambda) \to \cdots
\]
where $LCH^{\text{cyc}}(\Lambda)$ is the \textit{cyclic Legendrian contact homology} of $\Lambda$: the homology of the complex generated by cyclic words in Reeb chords of $\Lambda$, with differential induced by $\d_\Lambda$. For symplectic homology, one can define another homology $LCH^{\text{Ho}}_*(\Lambda)$ derived from $(\A_\Lambda,\d_\Lambda)$ using a construction analogous to Hochschild homology; the precise definition of $LCH^{\text{Ho}}_*(\Lambda)$ is a bit involved and we refer the reader to \cite{BEE}. We then have the following result.

\begin{theorem}[{\cite[Corollary 5.7]{BEE}}]
There is an isomorphism $SH_*(X) \cong LCH^{\text{Ho}}_*(\Lambda).$
\end{theorem}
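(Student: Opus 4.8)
The plan is to compute both sides at the chain level, using a Morse--Bott model for symplectic homology adapted to the handle decomposition of $X$, and to identify the resulting complex with the Hochschild-type complex $LCH^{\mathrm{Ho}}_*(\Lambda)$ of \cite{BEE} by a neck-stretching argument along the contact boundary $Y_0 = \partial X_0$ of the subcritical piece (equivalently, around the belt spheres of the critical handles). Concretely, one fixes a Liouville form on the completion of $X$ and a Hamiltonian whose $1$-periodic orbits cluster near $\partial X$, so that (after a small Morse perturbation) the generators of $SC_*(X)$ are the Reeb orbits of $\partial X$; here $\partial X$ is $Y_0$ with Legendrian surgery performed on $\Lambda$. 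With a carefully chosen contact form, each such orbit either misses all of the critical handles or traverses them, and a single traversal of a handle along $\Lambda$ records a Reeb chord of $\Lambda$. Thus an orbit running through the handles $k$ times is encoded by a cyclic word $a_{i_1}a_{i_2}\cdots a_{i_k}$ in the Reeb chords of $\Lambda$, together with the homotopy data (an element of $\pi_1$, or a loop-space chain, cf.\ Remark~\ref{rmk:dga-flavors}) of the connecting arcs; the orbits disjoint from the handles bound in $X_0$ and, by the subcritical structure of $X_0$, contribute only an acyclic piece. This produces the underlying graded module of $LCH^{\mathrm{Ho}}_*(\Lambda)$, the grading being the Conley--Zehnder index, which one checks equals the DGA grading of the word up to the overall shift built into the definition of $LCH^{\mathrm{Ho}}$.

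The differential is then analyzed by stretching the neck along $Y_0$. A rigid Floer cylinder contributing to $\d_{SH}$ breaks into a building whose top level lives in the completion of $X_0$ together with the attached Lagrangian cores $\Delta_\Lambda$, with lower levels in the symplectization of $Y_0$; by subcriticality the only curves surviving in the $X_0$-level are the ``thin'' pieces that stay in the handle region, while the substantive curves are holomorphic disks in $\R\times Y_0$ with boundary on $\R\times\Lambda$ --- exactly the disks counted by $\partial_\Lambda$ as in Section~\ref{ssec:symplectization}. Assembling these, one chord in the cyclic word is replaced by $\partial_\Lambda$ of that chord, the remaining chords are carried along, and the homotopy classes of the arcs are concatenated appropriately; this is precisely the Hochschild-type differential defining $LCH^{\mathrm{Ho}}$. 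An action/energy filtration just as in Remark~\ref{rmk:filtration} guarantees finiteness of all the sums. One must also resolve a ``trace''/cyclic-symmetrization bookkeeping issue: the cyclic orderings of the word and the choice of which chord is ``opened up'' are what distinguish the Hochschild complex of $\AlgL$ from its cyclic version, and matching these conventions and the signs (via coherent orientations, cf.\ \cite{EES-ori}) requires care.

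The main obstacle is the analytic surgery formula underlying the neck-stretching step: proving compactness, transversality, and gluing for the degenerate buildings above --- especially for holomorphic curves with boundary on the Lagrangian cocore/core discs and with mixed Reeb-orbit and Reeb-chord asymptotics --- and showing that every broken configuration in the compactified moduli space is glueable, so that the boundaries of the $1$-dimensional moduli spaces yield exactly $\d_{SH}^2$ on one side and the square of the Hochschild differential on the other. As the excerpt already warns, several of the foundational transversality issues here are still being settled, and in \cite{BEE} this step is carried out via SFT compactness combined with a detailed analysis of the Reeb dynamics near the surgered handles.

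An alternative, more structural route, which I would run in parallel as a consistency check, bypasses the direct curve count: one uses Ganatra's theorem that the open--closed map $HH_*(\mathcal{W}(X)) \to SH_*(X)$ from the wrapped Fukaya category is an isomorphism once $\mathcal{W}(X)$ is smooth; the generation result (Chantraine--Dimitroglou Rizell--Ghiggini--Golovko, cf.\ \cite{CDGG1,CDGG2}, or Ganatra--Pardon--Shende) that $\mathcal{W}(X)$ is split-generated by the Lagrangian cocore discs $C_\Lambda$ of the critical handles; and the Koszul-duality/surgery statement that the wrapped Floer $A_\infty$-algebra $CW^*(C_\Lambda,C_\Lambda)$ is quasi-isomorphic to $(\AlgL,\d_\Lambda)$. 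Then $SH_*(X) \cong HH_*(\mathcal{W}(X)) \cong HH_*(CW^*(C_\Lambda,C_\Lambda)) \cong HH_*(\AlgL,\d_\Lambda) =: LCH^{\mathrm{Ho}}_*(\Lambda)$, the grading shift being absorbed into the $A_\infty$-conventions. Here the main obstacle shifts to the generation statement and to the quasi-isomorphism $CW^*(C_\Lambda,C_\Lambda)\simeq\AlgL$, whose proof again ultimately rests on a surgery/neck-stretching argument of the same flavor.
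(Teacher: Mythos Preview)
The paper does not give its own proof of this theorem: it is stated as a citation of \cite[Corollary~5.7]{BEE}, and the surrounding text explicitly notes that ``the proofs of the results announced in \cite{BEE}, including the above results about $CH_*(X)$ and $SH_*(X)$, have not yet appeared.'' So there is no proof in the paper to compare your proposal against.

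That said, your sketch is a faithful outline of the strategy of \cite{BEE}: model the Reeb dynamics on the surgered boundary so that closed orbits are encoded by cyclic words of Reeb chords of $\Lambda$, and stretch the neck along $Y_0$ so that Floer cylinders break into DGA disks. Your caveats about compactness, transversality, and gluing for buildings with mixed orbit/chord asymptotics are exactly the unfinished foundations the paper is flagging. Your alternative route via $SH_*(X)\cong HH_*(\mathcal{W}(X))$, generation of $\mathcal{W}(X)$ by cocores, and $CW^*(C_\Lambda,C_\Lambda)\simeq \AlgL$ is also in line with what the paper gestures at in the paragraphs following the theorem (citing \cite{EL} and \cite{CDGG-generation}); note that the generation result you want is \cite{CDGG-generation} (or Ganatra--Pardon--Shende), not \cite{CDGG1,CDGG2}, and that this route still bottlenecks on the surgery quasi-isomorphism $CW^*(C_\Lambda,C_\Lambda)\simeq\AlgL$, whose full proof is likewise part of the announced-but-unpublished package.
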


We note that the proofs of the results announced in \cite{BEE}, including the above results about $CH_*(X)$ and $SH_*(X)$, have not yet appeared. Nevertheless, the main takeaway is that both linearized contact homology $CH_*(X)$ and symplectic homology $SH_*(X)$ are determined by the DGA $(\A_\Lambda,\d_\Lambda)$ of the Legendrian attaching sphere $\Lambda$.

In the special case where $\dim X = 4$, the subcritical domain $X_0$ can be decomposed into $0$-handles and $1$-handles, and the boundary $\partial X_0$ is a connected sum $\#^k(S^1 \times S^2)$. The DGA of a Legendrian knot or link in $\#^k(S^1 \times S^2)$ has been combinatorially described in \cite{EN}, generalizing the $k=0$ case, which corresponds to the contact manifold $S^3$ and where it can be shown that the DGA is the same as the one we have considered for Legendrian knots in $\R^3$. It follows that for any Weinstein domain $X$ of dimension $4$, there is a combinatorial description for $CH_*(X)$ and $SH_*(X)$ in terms of a diagram for the Legendrian knot or link in $\#^k(S^1 \times S^2)$ along which the critical handles are attached. As one sample consequence, it can be shown using $CH_*$ that the contact $3$-manifolds obtained from $S^3$ by Legendrian surgery on the Chekanov $m(5_2)$ knots, while the same as smooth manifolds, are distinct as contact manifolds; see \cite{BEE}.

A more direct interpretation of LCH as it relates to Weinstein domains is given by wrapped Floer homology. To set this up, we use the same setup as before: let $X_0$ be a Weinstein (or Liouville) domain, let $\Lambda$ be a Legendrian sphere in the contact boundary $\partial X_0$, and let $X$ be the Liouville domain obtained from $X_0$ by attaching a Weinstein handle along $\Lambda$. The core of the handle is a Lagrangian disk $\Delta_\Lambda$ and the handle itself is then symplectomorphic to $T^*\Delta$. A fiber of this cotangent bundle is another Lagrangian disk, the \textit{cocore disk} $C_\Lambda$, which intersects $\Delta_\Lambda$ once and whose boundary lies on $\partial X$. See Figure~\ref{fig-Weinstein}.

To the Lagrangian cocore $C_\Lambda$ one can associate an invariant called the \textit{wrapped Floer homology} $HW_*(C_\Lambda)$. The following result has been announced in \cite{BEE}, with a proof sketch given in \cite{EL}:

\begin{theorem}
There is an isomorphism between $HW_*(C_\Lambda)$ and the full Legendrian contact homology $LCH_*(\Lambda) = H_*(\A_\Lambda,\d_\Lambda)$.
\end{theorem}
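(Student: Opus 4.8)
The plan is to compute the wrapped Floer complex $CW_*(C_\Lambda)$ essentially on the nose and identify it, chain by chain, with $(\A_\Lambda,\d_\Lambda)$, so that the theorem drops out on homology. First I would fix the geometry: model the Weinstein handle on $T^*D^n$ with $2n=\dim X$, so that $\Delta_\Lambda$ is the zero section and $C_\Lambda$ is a cotangent fiber; choose a wrapping Hamiltonian $H$ that is $C^2$-small on $X_0$ and grows linearly in the Reeb coordinate near $\partial X$; and choose an almost complex structure adapted to stretching the neck along a hypersurface isotopic to $\partial X_0$ placed so as to separate the surgery region from the interior of $X_0$. Two things then need to be established: (i) a degree-preserving bijection between the generators of $CW_*(C_\Lambda)$ and a $\Z$-basis of $\A_\Lambda$, and (ii) a matching of the wrapped differential with $\d_\Lambda$.

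For (i): after the perturbation by $H$, the time-$1$ Hamiltonian chords of $C_\Lambda$ that wrap once past the cocore are in natural bijection with Reeb chords of the Legendrian boundary $\partial C_\Lambda \subset \partial X$, and the combinatorial heart of the surgery picture is that a chord wrapping $k+1$ times around the handle is recorded by the ordered sequence of Reeb chords of $\Lambda$ --- together with the homotopy class in $\Lambda$ (equivalently, a point of the based loop space of $\Lambda$) --- that its projection to $\partial X_0$ sweeps out between consecutive passes through the core $\Delta_\Lambda$. Reading off this word identifies the underlying module of $CW_*(C_\Lambda)$ with $\A_\Lambda = \Z\langle a_1,\ldots,a_n,t^{\pm 1}\rangle$, the powers of $t$ coming from the $H_1(\Lambda)=\Z$ data in dimension $3$ (for $\dim\Lambda>1$ the sphere $\Lambda$ kills these, while the refinement to the based loop space is exactly the Ekholm--Lekili loop-space DGA). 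Gradings match by comparing the Conley--Zehnder index of a wrapped chord with the grading on $\A_\Lambda$ assembled, as in Section~\ref{ssec:symplectization}, from the capping-path data of the constituent Reeb chords.

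For (ii): stretch the neck. An SFT-type compactness argument shows that a rigid strip contributing to the wrapped differential of $C_\Lambda$ degenerates into a holomorphic building whose essential pieces are of two kinds: rigid ``thin'' pieces in the completed handle $T^*D^n$ with boundary on $C_\Lambda$, which are combinatorial and merely splice together consecutive wrapping segments; and genuine punctured disks in $\R\times\partial X_0$ with boundary on $\R\times\Lambda$, carrying one positive puncture at a Reeb chord $a$ and negative punctures at Reeb chords $b_1,\ldots,b_n$ --- that is, elements of the moduli spaces $\M(a;b_1,\ldots,b_n)$ that define $\d_\Lambda$ (cf.\ Section~\ref{ssec:symplectization}, or the $\Delta(a;b_1,\ldots,b_n)$ of Section~\ref{firstdef} in the $\R^3$ model). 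Gluing shows conversely that each such building comes from a unique rigid strip, and because the strip boundary crosses the cocore repeatedly, the words $w(u)$ from Section~\ref{firstdef} assemble by concatenation, so the wrapped differential of a generator $a$ is precisely $\sum \e(u)\, w(u) = \partial_\Lambda a$. Signs are fixed by coherent orientations of the relevant moduli spaces, using Karlsson's orientation package \cite{Karlsson-orient} for the handle. This yields a chain isomorphism $CW_*(C_\Lambda)\cong(\A_\Lambda,\d_\Lambda)$ and hence the stated isomorphism on homology; with a little extra work the pair-of-pants product on $HW_*(C_\Lambda)$ is seen to correspond to concatenation in $\A_\Lambda$, upgrading the isomorphism to one of rings.

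The hard part is wholly analytic: proving the SFT compactness and gluing statements for holomorphic strips whose Lagrangian boundary threads the handle an unbounded number of times, securing transversality for the broken configurations, and pinning down the chord--word bijection rigorously and with signs. This is exactly the portion of the surgery formula announced but not fully written in \cite{BEE}. The most tractable route at present appears to be the partially wrapped (stopped) Floer theory developed in \cite{EL}: there one replaces $C_\Lambda$ by the linking disk of a stop and builds the comparison map from the Lagrangian surgery-trace cobordism, a framework that both tames the analysis and makes transparent why the natural target of the isomorphism is really the loop-space DGA, with $H_*(\A_\Lambda,\d_\Lambda)$ recovered by passing to loop-space homology.
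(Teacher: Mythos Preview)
The paper does not contain a proof of this theorem. It is stated there as a result announced in \cite{BEE}, with a proof sketch in \cite{EL}, and the paper only cites those references and adds a brief categorical gloss (the cocore generates the wrapped Fukaya category, and its endomorphism algebra is $A_\infty$ quasi-isomorphic to $(\A_\Lambda,\d_\Lambda)$).

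Your outline is essentially the \cite{BEE}/\cite{EL} approach that the paper points to: identify wrapped chords of $C_\Lambda$ with words in Reeb chords of $\Lambda$ via the handle model, then neck-stretch along $\partial X_0$ so that rigid strips break into thin handle pieces glued to punctured disks in $\R\times\partial X_0$ counted by $\d_\Lambda$. You also correctly flag that the hard analysis (compactness and gluing for strips that wrap the handle arbitrarily many times, transversality, signs) is exactly the part not yet written up in \cite{BEE}, and that \cite{EL} offers the stopped/partially-wrapped route via the linking disk and the loop-space DGA. So there is no discrepancy to report: your sketch matches the literature the paper cites, and the paper itself offers nothing further to compare against.
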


\noindent
One can interpret this result on the level of categories. The cocore $C_\Lambda$ is an object in the wrapped Fukaya category of $X$ and indeed generates this wrapped category \cite{CDGG-generation}. The endomorphism algebra of the full subcategory corresponding to the single object $C_\Lambda$ is then $A_\infty$ quasi-isomorphic to the DGA $(\A_\Lambda,\d_\Lambda)$. See \cite[Theorem~2]{EL}.

There is (conjecturally) a similar interpretation of the loop space DGA (see Remark~\ref{rmk:dga-flavors}) in terms of a partially wrapped version of Floer homology \cite{Sylvan}, cf.\ \cite[Theorem~2]{EL}, and this fits into a broader picture of Ganatra, Pardon, and Shende concerning partially wrapped Fukaya categories and Liouville sectors. See
\cite{GPS3} for further results in this direction.

\appendix

\section{The DGA of the Pretzel Knot $P(3,-3,-4)$}\label{app:unknotDGA}

\begin{figure}[b]
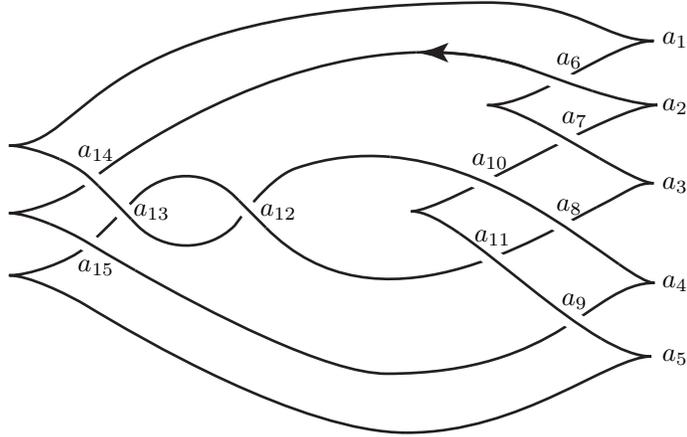

{\small
\begin{overpic}{figures/m10140}
\put(247, 151){$a_1$}
\put(247, 126){$a_2$}
\put(247, 96){$a_3$}
\put(247, 59){$a_4$}
\put(247, 31){$a_5$}
\put(207, 143){$a_6$}
\put(209, 121){$a_7$}
\put(207, 88){$a_8$}
\put(209, 52){$a_9$}
\put(175, 105){$a_{10}$}
\put(176, 76){$a_{11}$}
\put(95, 86){$a_{12}$}
\put(47, 86){$a_{13}$}
\put(26, 108){$a_{14}$}
\put(26, 65){$a_{15}$}
\end{overpic}}
\caption{A Legendrian knot of type $P(3,-3,-4)=m(10_{140})$. Crossings and right cusps (corresponding to Reeb chords for the resolution of this front) are labeled. A base point is placed in the loop at $a_5$ produced by the resolution.}
\label{fig:m10140}
\end{figure}

Here we prove Proposition~\ref{prop:unknotDGA} for the case $m=1$ by providing an explicit stable tame isomorphism between the DGA for the Legendrian pretzel knot $P(3,-3,-4)$ shown in Figure~\ref{fig:m10140}, which we call $\Lambda$, and the DGA for the unknot from Example~\ref{ex:unknot}.
The knot $\Lambda$ has $15$ Reeb chords, of the following degrees:
\begin{align*}
2: & \quad  a_8, a_{13} \\
1: & \quad  a_1, a_2, a_3, a_4, a_5,a_{10}, a_{15} \\
0: & \quad  a_6, a_7, a_{11}, a_{14} \\
-1: & \quad  a_9 \\
-2: & \quad  a_{12}.
\end{align*}

The DGA $(\A_\Lambda,\d=\d_\Lambda)$ is generated by $a_1,\ldots,a_{15}$, along with $t^{\pm 1}$ in degree $0$. The differential is given as follows:
\begin{align*}
\d(a_1) &= 1+a_{14}a_6 \\
\d(a_2) &= 1-a_6a_7+a_{15}a_{12}a_{10} \\
\d(a_3) &= 1 - a_7 a_{11} \\
\d(a_4) &= 1+a_{10}a_9-a_{14}-a_8a_{12}a_{14} \\
\d(a_5) &= t^{-1}-a_{11}-a_{11}a_{12}a_{13}+a_9a_{15} \\
\d(a_8) &= a_{10}a_{11} \\
\d(a_9) &= -a_{11}a_{12}a_{14} \\
\d(a_{13}) &= -a_{14}a_{15}
\end{align*}
and $\d(a_i) = 0$ for all other $i \leq 15$.

\begin{remark}
Before we present the stable tame isomorphism between this DGA and the DGA for the unknot, we comment on the motivation for this computation, which comes from the characteristic algebra \cite{Ng-CLI}. The characteristic algebra $\mathcal{C}$ of $(\A_\Lambda,\d)$ is defined to be the quotient of $\A_\Lambda$ by the two-sided ideal generated by $\{\d(a_i)\}$, and is generally easier to handle than the homology of $(\A_\Lambda,\d)$ while still being invariant in a suitable sense (see \cite{Ng-CLI}). Here $\mathcal{C}$ is generated by $a_1,\ldots,a_{15},t^{\pm 1}$, and in $\mathcal{C}$ we have the following relations
\[
a_{14}a_6=-1, \qquad a_7a_{11} = 1, \qquad a_{11}a_{12}a_{14} = 0, \qquad a_6a_7 = 1+a_{15}a_{12}a_{10}
\]
from $\d(a_1)$, $\d(a_3)$, $\d(a_9)$, $\d(a_2)$ respectively. It follows that in $\mathcal{C}$,
$a_{12} = -a_7a_{11}a_{12}a_{14}a_6 = 0$ and so $a_6a_7 = 1$. Together with $a_7a_{11}=1$, this implies that $a_{11}=a_6a_7a_{11}=a_6$ and so
$a_6=a_{11}$ and $a_7$ are two-sided inverses of each other. We can successively use the rest of the relations in $\mathcal{C}$ coming from $\d(a_i)$ to conclude that $\C$ can be reduced to generators $a_1,a_2,a_3,a_4,a_5,a_8,a_9,a_{13},t^{\pm 1}$ with a single relation $1+t^{-1}$. This is equivalent to the characteristic algebra for the unknot, which has generators $a,t^{\pm 1}$ with the same single relation.
\end{remark}

We now proceed to the stable tame isomorphism between DGAs. 
In $(\A_\Lambda,\d)$, note that $\d(a_7a_9a_6) = -a_7a_{11}a_{12}a_{14}a_6$ and so
\[
\d(a_7a_9a_6-a_3a_{12}a_{14}a_6+a_{12}a_1) = a_{12}.
\]
Now stabilize $\A_\Lambda$ once by adding generators $a_{16},a_{17}$ with $|a_{16}|=0$, $|a_{17}|=-1$ and $\d(a_{16}) = a_{17}$, $\d(a_{17}) = 0$. Then if we conjugate by the elementary automorphism that sends
\[
a_{17} \mapsto a_{17}-(a_7a_9a_6-a_3a_{12}a_{14}a_6+a_{12}a_1)
\]
then the new differential, which we also write as $\d$, agrees with the original $\d$ except for $\d(a_{17})= a_{12}$ and $\d(a_{16}) = a_{17}-a_7a_9a_6+a_3a_{12}a_{14}a_6-a_{12}a_1$. We then use the following elementary automorphisms to remove $a_{12}$ from the differentials of all generators besides $a_{17}$:
\begin{align*}
a_2 &\mapsto a_2-a_{15}a_{17}a_{10} \\
a_5 &\mapsto a_5-a_{11}a_{17}a_{13} \\
a_9 &\mapsto a_9-a_{11}a_{17}a_{14}
\end{align*}
followed by
\begin{align*}
a_4 &\mapsto a_4 - a_8a_{17}a_{14} \\
a_{16} &\mapsto a_{16}-a_3a_{17}a_{14}a_6-a_{17}a_1.
\end{align*}
The end result is the following differential:
\begin{align*}
\d(a_1) &= 1+a_{14}a_6 \\
\d(a_2) &= 1-a_6a_7 \\
\d(a_3) &= 1 - a_7 a_{11} \\
\d(a_4) &= 1+a_{10}a_9-a_{14} \\
\d(a_5) &= t^{-1}-a_{11}+a_9a_{15} \\
\d(a_8) &= a_{10}a_{11} \\
\d(a_{13}) &= -a_{14}a_{15} \\
\d(a_{16}) &= -a_7a_9a_6\\
\d(a_{17}) &= a_{12}
\end{align*}
and $\d(a_i) = 0$ for all other $i\leq 17$.

Next note that $\d(-a_6a_{16}a_7+a_2a_9a_6a_7-a_9a_2) = a_9$. We stabilize $\A_\Lambda$ once more by adding generators $a_{18},a_{19}$ with $|a_{18}| = 1$, $|a_{19}| = 0$ and $\d(a_{18}) = a_{19}$, $\d(a_{19}) = 0$. Conjugate by the elementary automorphism
\[
a_{19} \mapsto a_{19} -(-a_6a_{16}a_7+a_2a_9a_6a_7-a_9a_2)
\]
to get $\d(a_{19}) = a_9$ and $\d(a_{18}) = a_{19} +a_6a_{16}a_7-a_2a_9a_6a_7+a_9a_2$. Now eliminate $a_9$ from the differentials of everything besides $a_{19}$ by applying
\begin{align*}
a_4 &\mapsto a_4-a_{10}a_{19} \\
a_5 &\mapsto a_5-a_{19}a_{15} \\
a_{16} &\mapsto a_{16}-a_7a_{19}a_6
\end{align*}
followed by
\[
a_{18} \mapsto a_{18}+a_{19}a_2+a_2a_{19}a_6a_7.
\]
The end result is:
\begin{align*}
\d(a_1) &= 1+a_{14}a_6 \\
\d(a_2) &= 1-a_6a_7 \\
\d(a_3) &= 1 - a_7 a_{11} \\
\d(a_4) &= 1-a_{14} \\
\d(a_5) &= t^{-1}-a_{11} \\
\d(a_8) &= a_{10}a_{11} \\
\d(a_{13}) &= -a_{14}a_{15} \\
\d(a_{17}) &= a_{12} \\
\d(a_{18}) &= a_6a_{16}a_7 \\
\d(a_{19}) &= a_9
\end{align*}
and $\d(a_i) = 0$ for all other $i\leq 19$.

It is now straightforward to reduce this DGA to the DGA of the unknot. Successively apply the following elementary automorphisms:
\begin{align*}
a_{14} &\mapsto a_{14}+1 \\
a_1 &\mapsto a_1-a_4a_6 \\
a_{13} &\mapsto a_{13}+a_4a_{15} \\
a_6 &\mapsto a_6-1 \\
a_2 &\mapsto a_2-a_1a_7 \\
a_7 &\mapsto a_7-1 \\
a_3 &\mapsto a_3-a_2a_{11} \\
a_{11} &\mapsto a_{11}-1 \\
a_5 &\mapsto a_5-a_3 \\
a_8 &\mapsto a_8-a_{10}a_3 \\
a_{18} &\mapsto a_{18}-a_1a_{16}+a_1a_{16}a_7-a_{16}a_2 
\end{align*}
to give
\begin{align*}
\d(a_1) &= a_6 & \d(a_8) &= -a_{10} \\
\d(a_2) &= a_7 & \d(a_{13}) &= -a_{15}\\
\d(a_3) &= a_{11} & \d(a_{17}) &= a_{12} \\
\d(a_4) &= -a_{14} & \d(a_{18}) &= a_{16} \\
\d(a_5) &= 1+t^{-1} & \d(a_{19}) &= a_9 \\
\end{align*}
and $\d(a_i)=0$ for all other $i \leq 19$. Destabilize by removing generators in pairs: $a_1,a_6$; $a_2,a_7$; $a_3,a_{11}$; $a_4,a_{14}$; $a_8,a_{10}$; $a_{13},a_{15}$; $a_{17},a_{12}$; $a_{18},a_{16}$; $a_{19},a_9$. This produces the DGA generated by $a_5$ alone, with differential $\d(a_5) = 1+t^{-1}$, and this is precisely the DGA of the unknot from Example~\ref{ex:unknot}.

\newcommand{\etalchar}[1]{$^{#1}$}

\newpage

\centerline{
\textbf{
Errata to the published version of this paper
}
}


These are a list of known errata to the published version of ``Legendrian contact homology in $\mathbb{R}^3$'' (in \textit{Surveys in $3$-Manifold Topology and Geometry}). 
Page numbers refer to the arXiv version of this paper (1811.10966v4 or 1811.10966v5), and alternatively to the published version of the paper.


\begin{itemize}
\item
p.\ 10 (published p.\ 112).
The sentence ``One may check that if $\Delta(a;b_1,\ldots,b_n)$ is nonempty then $|a|-\sum_{i=1}^n |b_i|=1$'' is correct if the rotation number of $\Lambda$ is $0$. However, when $\rot(\Lambda) \neq 0$, the statement is generally incorrect and should be amended as follows. Given any $u \in \Delta(a;b_1,\ldots,b_n)$, we can associate an integer $t(u)$ to be the number of times $\partial u$ passes through the base point $\ast$, counted with sign; in the language of p.\ 10 (published p.\ 112), we have $t(u) = \sum_{i=0}^n t(\eta_i)$. For $k\in\mathbb{Z}$, we then define
\[
\Delta_k(a;b_1,\ldots,b_n) = \{ u\in \Delta(a;b_1,\ldots,b_n)\,|\,t(u)=k\}.
\]
The correct statement is: if $\Delta_k(a;b_1,\ldots,b_n)$ is nonempty then
\[
|a|-\sum_{i=1}^n |b_i|+2k\rot(\Lambda) = 1.
\]
Note that this agrees with the differential $\partial_\Lambda$ having degree $-1$.
\item
p.\ 12 (published p.\ 114), Remark 3.4. The grading of $t_i$ should be $-2\rot(\Lambda_i)$ rather than $2\rot(\Lambda_i)$.
\item
p.\ 15 (published p.\ 117). In the definition of elementary automorphism, an elementary automorphism $\phi$ is not necessarily a chain map, but rather an algebra map that may change the differential. So $\phi$ should be viewed as an automorphism of $\mathbb{Z}\langle a_1,\ldots,a_n,t^{\pm 1}\rangle$ rather than of the DGA $(\mathbb{Z}\langle a_1,\ldots,a_n,t^{\pm 1}\rangle,\partial)$. It maps $(\mathbb{Z}\langle a_1,\ldots,a_n,t^{\pm 1}\rangle,\partial)$ to $(\mathbb{Z}\langle a_1,\ldots,a_n,t^{\pm 1}\rangle,\partial')$ where $\partial' = \phi \circ \partial \circ \phi^{-1}$.

Also, the expression $\pm t^k a_j t^\ell+u$ should be homogeneous in degree. Alternatively, we can restrict our notion of an elementary automorphism to send $\phi(a_j) = a_j + u$ where $|u|=|a_j|$ (so that every elementary automorphism is graded), and then enlarge the notion of tame isomorphism to include maps sending $a_i \mapsto \pm t^k a_i t_\ell$. This then includes tame isomorphisms that may not be graded, but the lack of grading-preserving is entirely due to ``changes of basis'' that replace a generator $a_i$ of the semifree algebra by another generator $\pm t^k a_i t_\ell$.
\item
p.\ 20 (published p.\ 112). The definition of $\epsilon(u)$ should be as follows: $\epsilon(u)$ is $\pm 1$ depending on whether the number of \textit{even-graded} $-$ corners in $u$ that cover a downward-facing quadrant is even or odd. (The phrase ``even-graded'' is missing from the definition in the paper.)
\end{itemize}

\end{document}